\documentclass[a4paper,12pt]{amsart}

\usepackage{amssymb,amsbsy,amsmath,amsfonts,amssymb,amscd}
\usepackage{latexsym}
\usepackage{geometry}

\input xy
\xyoption{all}




\newcommand\ga{\gamma}

\newcommand{\RR}{\ensuremath{\mathbb{R}}}
\newcommand{\ZZ}{\ensuremath{\mathbb{Z}}}
\newcommand{\QQ}{\ensuremath{\mathbb{Q}}}

\newcommand{\NN}{\ensuremath{\mathbb{N}}}
\newcommand{\hol}{\ensuremath{\mathcal{O}}}

\newcommand{\PP}{\ensuremath{\mathbb{P}}}

\newcommand{\ra}{\ensuremath{\rightarrow}}

\def\eea{\end{eqnarray*}}
\def\bea{\begin{eqnarray*}}

\newcommand\dual{\mathrel{\raise3pt\hbox{$\underline{\mathrm{\thinspace d
\thinspace}}$}}}
\newcommand\qe{\ifhmode\unskip\nobreak\fi\quad $\Box$}       

\def\BOX{\hfill\lower.5\baselineskip\hbox{$\Box$}}

\newtheorem{theo}{Theorem}[section]
\newtheorem{remarkk}[theo]{Remark}
\newenvironment{rem}{\begin{remarkk}\rm}{\end{remarkk}}

\newtheorem{defin}[theo]{Definition}
\newenvironment{definition}{\begin{defin}\rm}{\end{defin}}

\newtheorem{prop}[theo] {Proposition}
\newtheorem{cor}[theo]{Corollary}
\newtheorem{lemma}[theo]{Lemma}
\newtheorem{example}[theo]{Example}

\newcommand{\BT}{\ensuremath{\mathbb{T}}}

\DeclareMathOperator{\Aut}{Aut}
\DeclareMathOperator{\Sing}{Sing}
\DeclareMathOperator{\lcm}{lcm}

\DeclareMathOperator{\Stab}{Stab}

\begin{document}

\title[Surfaces with $p_g=q=0$]{The classification of minimal product-quotient surfaces with
  $p_g=0$.}
\author{I. Bauer,  R. Pignatelli}

\thanks{The present work took place in the realm of the DFG Forschergruppe 790
"Classification of algebraic surfaces and compact complex manifolds",
in particular  the
visit of the second author to Bayreuth was supported by the DFG. The second author is a 
member of G.N.S.A.G.A. of I.N.d.A.M.
We are very grateful to Fritz Grunewald from whom we learnt a lot about group theory, 
mathematics and life. Fritz passed away on March 21, 2010: we lost a very close friend, 
a great mathematician and a wonderful person.
The second author is also indebted to D. Frapporti for pointing out an error in 
a previous version of the program.}

\date{\today}
\maketitle
{\bf This article is dedicated to the memory of our dear friend and collaborator Fritz Grunewald}
\section*{Introduction}

The present article is the fourth in a series of papers (cf. \cite{bacat}, \cite{pg=q=0}, \cite{4names}), where the 
goal is 
to contribute to the classification problem of surfaces of general type by giving a systematic way to 
construct and distinguish
algebraic surfaces. 

We will use the basic notations from the classification theory of complex projective surfaces, in particular
the basic numerical invariants $K_S^2$, $p_g :=h^0(S, \Omega^2_S)$, $q(S) :=h^1(S, \hol_S)$; 
the reader unfamiliar with these
may consult  e.g. \cite{beau}. 

The methods we introduced in the above cited articles, and substantially develop and refine 
in the present paper are in principle applicable to many more situations.
Still we restrict ourselves to the case of surfaces of general type with geometric genus 
$p_g  = 0$.

It is nowadays
well known that minimal surfaces of general type with $p_g = 0$
 yield  a
finite number of irreducible components of the
moduli space of surfaces of general type. Although it is theoretically possible to describe all 
irreducible components of the moduli space corresponding to surfaces of general type with $p_g=0$, this ultimate goal is 
far out of reach, even if there has been a substantial progress in the study of these surfaces especially in the last five years. 
We refer to
to \cite{4names} and \cite{surveypg=0} for a historical account and recent update on what is known 
about surfaces of general type with $p_g = 0$.

We study
the following situation:
let $G$ be a finite group acting on two compact Riemann surfaces $C_1$, $C_2$ of respective genera at 
least 2.
We shall consider the diagonal action of $G$ on $C_1 \times C_2$ and
in this situation we say for short: the action of $G$ on
$C_1 \times C_2$ is {\em unmixed}. By \cite{FabIso} we may assume wlog
that $G$ acts faithfully on both factors.
\begin{defin}\label{prodquot}
The minimal resolution $S$ of the singularities of $X = (C_1 \times C_2)/ G$, where $G$ is a finite group with an unmixed
action on the direct product of two compact Riemann surfaces $C_1$, $C_2$ of respective genera at least
two, is called a {\em product-quotient surface}.

$X$ is called  {\em the quotient model} of the product-quotient surface.
\end{defin}

\begin{rem}
1) It is possible that two product-quotient surfaces with different quotient models are
birational or even isomorphic. By a slight abuse of notation, we still call 
$X= (C_1 \times C_2)/ G$ "the" quotient model of 
the product-quotient
surface, because we use this notation only if we have fixed two Riemann surfaces and 
the action of a finite group on each of them. 

2) If $X$ has "mild" (to be precise: at most canonical) singularities, then the 
quotient model $X$ is equal 
to the canonical model of the 
product-quotient surface $S$, which is unique. For the definition of canonical surface singularities (which are also called rational double points) we refer to \cite{matsuki}, def. 4-2-1, thm. 4-6-7.

3)
In the general setting, i.e., if $X$ has non canonical singularities, $S$ is not  
necessarily a minimal surface, i.e., it may contain smooth rational curves with selfintersection $-1$. 
This is a substantial obstacle we have to overcome in the present paper. We profit from the fact that the 
construction of
our surfaces is quite explicit (cf. section \ref{fakegodeaux}). 

In general there is no 
way to determine rational curves on surfaces of general type. In fact, a famous still unsolved
conjecture by S. Lang asserts that a surface of general type can contain only
 a finite number of rational curves.

\end{rem}

The systematic classification of product-quotient surfaces  with $p_g = 0$ was started and 
carried through in \cite{bacat}, \cite{pg=q=0}, \cite{4names} for all surfaces whose canonical model 
is equal to $(C_1 \times C_2)/G$. 

\cite{bacat} classifies the surfaces $X=(C_1 \times C_2)/G$ with $G$ being an abelian group acting 
freely and $p_g(X)=0$.
This classification is extended in \cite{pg=q=0} to the case of an arbitrary group $G$.
We want to point out that in the first paper all calculations were done by hand, whereas in the 
second one the computations could not be  done by hand, but they were still 
"computer aided hand calculations".

In \cite{4names} instead we dropped the assumption that $G$ acts freely on $C_1 \times C_2$, 
we classified
product-quotient surfaces with $p_g=0$ whose quotient model is indeed the canonical model (i.e.,
$X$ has at most canonical singularities, note that here automatically $K_S^2 >0$). In this case for 
the first time a systematic use of a computer algebra program was strictly needed in order to obtain a 
complete classification.

In the present paper we drop any restriction on the singularities of $X$. We succeed to give a complete
classification of product-quotient surfaces $S$ with $K_S^2 > 0$. 

 In order to obtain  this result we had to 
substantially refine our previous MAGMA code, and for the first time we encountered serious problems
of complexity and memory usage. Especially, as $K_S^2$ gets smaller ($\leq0$), the problem of
finding the possible singular locus of $X$ gets more and more time and memory demanding. In order 
to finish the classification of product quotient surfaces with $p_g = 0$ one has to deal not only with
the above mentioned computational problems, but also
with the problem of bounding the number of rational curves on $S$, which in view of the previously 
mentioned
Lang's conjecture is foreseen to be hard.

We are interested in the minimal model of the constructed surfaces, 
in order to locate them in the geography of the fine classification of the surfaces of general type. 
We determined the minimal model of all 
these surfaces; the last two sections are dedicated to this scope. 
It turns out that all except one are in fact minimal. 
We call this last surface {\it the fake Godeaux surface}, because a minimal surface with the 
same invariants $p_g$ and $K^2$ is called {\it numerical Godeaux surfaces}.
The section \ref{fakegodeaux} is dedicated to it.

 The following summarizes the results of the series of four papers.

\begin{theo}\label{classiso}
\begin{enumerate}
\item Surfaces $S$ isogenous to a product (i.e., $S$ is an \'etale
quotient of a product of two compact Riemann surfaces of respective genera
at least 2 by a finite group) with $p_g(S) = q(S) = 0$ form
17 irreducible connected components of the moduli space
$\mathfrak{M}$ of surfaces of general type. Exactly $13$ of these families are families of
product-quotient surfaces.

\item Minimal product-quotient surfaces with  $p_g=0$ form exactly 72
irreducible families.

\item There is exactly one product-quotient surface with $K_S^2>0$
which is non minimal. It has $K_S^2=1$, $\pi_1(S)=\ZZ/6\ZZ$ and its minimal model has $K^2=3$.
\end{enumerate}

\end{theo}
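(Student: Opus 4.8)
The three assertions are the endpoint of one finite classification algorithm applied across the four papers of the series, so the plan is to set that algorithm up, bound its search space, run it, and then match the output to the claimed counts. First I would translate the geometry into group theory: by the correspondence underlying Definition \ref{prodquot}, an unmixed product-quotient surface with $q=0$ is equivalent to the datum of a finite group $G$ together with two \emph{generating vectors}, i.e. two ordered tuples of elements of $G$ that generate $G$, have product the identity, and whose orders realize two prescribed branch signatures $(m_{i,1},\dots,m_{i,r_i})$; these encode the Galois covers $C_i\to C_i/G\iso\PP^1$. All invariants then become combinatorial: Riemann--Hurwitz gives $g_i-1=\tfrac{|G|}{2}(\mu_i-2)$ with $\mu_i=\sum_j(1-1/m_{i,j})$, the point stabilizers (hence the cyclic quotient singularities of $X$) are read off directly from the generating vectors, and $\chi(\hol_X)=\tfrac{(g_1-1)(g_2-1)}{|G|}$.

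Second I would use $p_g=q=0$, i.e. $\chi(\hol_S)=\chi(\hol_X)=1$ (quotient singularities being rational), to make the search finite. The resulting identity $(g_1-1)(g_2-1)=|G|$ combines with Riemann--Hurwitz to give $(\mu_1-2)(\mu_2-2)=4/|G|$, and it also yields $K_X^2=8(g_1-1)(g_2-1)/|G|=8$, so that $K_S^2$ equals $8$ minus explicit positive corrections coming from the resolution of the non-canonical singularities. Since every hyperbolic signature satisfies $\mu_i-2\geq 1/42$, attained by $(2,3,7)$, the identity forces $|G|\leq 4\cdot 42^2$ and simultaneously bounds the length and the entries of each signature. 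With $|G|$, the signatures, and the list of generating vectors all bounded, the enumeration is a finite MAGMA computation; for each surviving datum one computes the singularities, resolves them, obtains $K_S^2$, and retains the cases with $K_S^2>0$.

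Third, from this explicit list I would determine the minimal model of each surface. As $S$ is the minimal resolution of $X$, its only candidate $(-1)$-curves lie in the exceptional configurations over the non-canonical singularities, whose Hirzebruch--Jung string combinatorics is completely explicit; so minimality can be decided chain by chain, and where a $(-1)$-curve occurs one contracts it and recomputes $K^2$. This is where the single exception of (3) appears: exactly one surface with $K_S^2>0$ carries a $(-1)$-curve, and contracting the $(-1)$-curves raises $K^2$ from $1$ to $3$, while its fundamental group, computed from the orbifold fundamental groups of the two factors by the method of the earlier papers and unchanged under the resolution, is $\ZZ/6\ZZ$. Discarding this surface and keeping the rest, together with the freely-acting (étale) cases classified in the earlier papers that give the surfaces isogenous to a product, yields the raw lists behind (1) and (2).

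Finally, to pass from individual surfaces to \emph{irreducible families} I would group them by deformation equivalence, using that product-quotient surfaces with fixed $G$ and fixed signatures form irreducible families and that the remaining coincidences between distinct combinatorial data are separated by comparing numerical invariants and fundamental groups; this produces the counts $17$, $13$, and $72$. The main obstacle is not any single step but the complexity of the enumeration: as $K_S^2$ shrinks the number of admissible singular configurations grows sharply, straining time and memory, and certifying minimality requires controlling \emph{all} $(-1)$-curves. Here this is tractable only because the resolution geometry is explicit, in sharp contrast with the $K_S^2\leq 0$ range, where bounding the rational curves runs into Lang's conjecture; restricting to $K_S^2>0$ is precisely what keeps the classification effective.
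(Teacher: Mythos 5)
Your reduction to group theory and your overall plan (enumerate data, compute singularities, decide minimality) follow the same general scheme as the paper, but two of your key steps rest on identities that fail precisely in the situation this theorem addresses, namely when $G$ does not act freely. First, your finiteness argument is derived from $\chi(\hol_X)=\frac{(g_1-1)(g_2-1)}{|G|}$, hence $(g_1-1)(g_2-1)=|G|$, $(\mu_1-2)(\mu_2-2)=4/|G|$ and $K_X^2=8$. These identities hold only for free actions: when there are fixed points, $\chi(\hol_S)=\chi(\hol_X)$ differs from $\chi(\hol_{C_1\times C_2})/|G|$ by local contributions of the singularities. Indeed, Proposition \ref{k2e} gives $12\chi(\hol_S)=\frac{12(g_1-1)(g_2-1)}{|G|}+e(\mathcal{B})-k(\mathcal{B})$, and $K_X^2=K_S^2+k(\mathcal{B})$ varies with the basket (for the families with basket $\{2\times\frac12(1,1)\}$ one has $K_X^2=6$, not $8$). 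Consequently your bound $|G|\leq 4\cdot 42^2$ is unjustified, and with it the finiteness of the whole search. The paper's actual route is to enumerate first the finitely many possible baskets for each $K^2$ (Lemma \ref{poss}, using $B(\mathcal{B})=3(8-K^2)$), and only then to bound signatures and $|G|$ basket by basket (Proposition \ref{finite}), where the corrections $k(\mathcal{B})$, $e(\mathcal{B})$, the index $I$, and divisibility constraints on the $m_i$ do the real work. A further inaccuracy: families are not indexed by triples $(G,t_1,t_2)$ but by equivalence classes of pairs of orbifold homomorphisms under Hurwitz moves and $\Aut(G)$ (Step 5 of the algorithm); several lines of the tables have $N=2$ or $3$, so your grouping would not produce the count $72$.

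Second, your minimality step asserts that the only candidate $(-1)$-curves on $S$ lie in the exceptional configurations over the non-canonical singularities, so that minimality "can be decided chain by chain". This cannot work: every exceptional curve of the minimal resolution of a cyclic quotient singularity is a component of a Hirzebruch--Jung string and has self-intersection $-b_i\leq -2$, so no $(-1)$-curve is ever contained in the exceptional locus. Any $(-1)$-curve on $S$ is the strict transform of a rational curve $\Gamma\subset X$ passing through $\Sing X$, and detecting or excluding such curves is exactly the hard problem the introduction flags via Lang's conjecture. The paper resolves it by pulling $\Gamma$ back to $C_1\times C_2$: a component $\Gamma_1$ dominates $\Gamma$ through a quotient map $\gamma\colon\tilde{\Gamma}_1\to\PP^1$ whose branch points must lie over singular points of $X$ (Lemma \ref{branchpoints}); combinatorial constraints coming from contracting curves on a surface of general type (Proposition \ref{countingpoints}, Corollary \ref{nu}) bound the number of branch points, and when this number is at most two, $\tilde{\Gamma}_1$ would be rational, which is impossible inside a product of curves of genera at least two. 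This criterion (Proposition \ref{baskets}) proves minimality of the $72$ families (Theorem \ref{minimal}). For the one remaining surface, the two $(-1)$-curves are not found by any resolution combinatorics: they must be constructed explicitly as images of $G$-invariant correspondences obtained from fibre products (Section \ref{fakegodeaux}), and the minimality of the blown-down surface must then be proved by running the criterion again on the resulting configuration. Your proposal contains no mechanism for producing these curves, nor for certifying that no others exist.
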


\begin{rem}
1) Part 1 is proved in \cite{bacat}, \cite{pg=q=0}. 

2) Of the 72 families of part 2, 40 are constructed in \cite{bacat}, \cite{pg=q=0}, \cite{4names}.
The remaining 32 families, as well as the fake Godeaux surface are new, and come out from our 
main classification result here. 

Therefore we contribute to the existing
knowledge about the complex projective surfaces $S$ of general type with
$p_g(S) = 0$ and their moduli spaces, constructing 33 new families
of such surfaces realizing 14 hitherto unknown topological types.

\end{rem}

\noindent
The product-quotient surfaces mentioned in part 2 of the above theorem are listed in 
tables \ref{K2>4} and \ref{K2<3}. We list the following information in the columns of the tables:
\begin{itemize}
\item $\Sing X$ is given as a sequence of rational numbers with multiplicities, describing the types of the cyclic 
quotient singularities, e.g., $2/3^2$ means 2 singular points of type $\frac13 (1,2)$;
\item $N$ is the number of irreducible families; indeed our tables have only 60 lines, but we collect in the same line $N$ families, which share all the other data; the number of lines, counted with  multiplicity $N$ is 72 (the number of families of theorem \ref{classiso}, 2));
\item $K_S^2$ is the selfintersection of the canonical divisor, $G$ the group, $H_1$ is the homology, and 
$\pi_1$ is the fundamental group.
\item $t_1, t_2$ are the signatures of the corresponding polygonal groups, cf. definition \ref{polgr} 
and the subsequent discussion.
\end{itemize}

For the groups occuring in  tables \ref{K2>4}, \ref{K2<3}, we use the following notation: we denote by $\ZZ_d$  the cyclic group of order $d$, 
 $\mathfrak{S}_n$ is the symmetric group in $n$
letters, $\mathfrak{A}_n$  is the alternating group and $Q_8$ is the quaternion group of order 8.

\noindent
$PSL(2,7)$ is the group of $2 \times 2$ matrices over
${\mathbb F}_7$ with determinant $1$ modulo the subgroup generated by $-Id$.

\noindent
$D_{p,q,r}=\langle x,y|x^p,y^q,xyx^{-1}y^{-r} \rangle$, and
$D_n=D_{2,n,-1}$ is the usual dihedral group of order $2n$.

$G(n,k)$ for instance is the $k$-th group of order $n$ in the MAGMA database of small groups.

\begin{table}
\renewcommand{\arraystretch}{1,3}
\caption{Minimal product-quotient surfaces of general type with $p_g=0$, $K^2\geq
4$}\label{K2>4}
\tiny \begin{tabular}{|c|c|c|c|c|c|c|c|}
\hline
$K^2_S$&Sing X&$t_1$&$t_2$&$G$&N&$H_1(S,{\mathbb Z})$&$\pi_1(S)$\\
\hline\hline 8& $\emptyset$ & $2, 5^2$ &$3^4$ &  ${\mathfrak A}_5$
&$1$ &$\ZZ_3^2 \times
\ZZ_{15}$&$1 \rightarrow \Pi_{21} \times \Pi_{4} \rightarrow \pi_1
\rightarrow G \rightarrow 1$\\ 8&
$\emptyset$ & $5^3$ & $2^3,3$ & ${\mathfrak A}_5$ & $1$
&$\ZZ_{10}^2$&$1 \rightarrow \Pi_{6} \times
\Pi_{13} \rightarrow \pi_1 \rightarrow G \rightarrow 1$\\ 8&
$\emptyset$ & $3^2,5$ & $2^5$&
${\mathfrak A}_5$ & $1$ &$\ZZ_2^3 \times \ZZ_6$&$1 \rightarrow
\Pi_{16} \times \Pi_{5} \rightarrow
\pi_1 \rightarrow G \rightarrow 1$\\ 8& $\emptyset$ & $2,4,6$&  $2^6$
&  ${\mathfrak S}_4\times
\ZZ_2$ &$1$ &$\ZZ_2^4 \times \ZZ_4$&$1 \rightarrow \Pi_{25} \times
\Pi_{3} \rightarrow \pi_1
\rightarrow G \rightarrow 1$\\ 8& $\emptyset$ & $2^2, 4^2$ &$2^3, 4$
&  ${\rm G}(32,27)$  &$1$
&$\ZZ_2^2 \times \ZZ_4 \times \ZZ_8$&$1 \rightarrow \Pi_{5} \times
\Pi_{9} \rightarrow \pi_1
\rightarrow G \rightarrow 1$\\ 8& $\emptyset$ & $5^3$& $5^3$ &
$\ZZ_5^2$ &$2$ &$\ZZ_5^2$&$1
\rightarrow \Pi_{6} \times \Pi_{6} \rightarrow \pi_1 \rightarrow G
\rightarrow 1$\\ 8& $\emptyset$ &
$3,4^2$ & $2^6$ &  ${\mathfrak S}_4$ &$1$ &$\ZZ_2^4 \times \ZZ_8$&$1
\rightarrow \Pi_{13} \times
\Pi_{3} \rightarrow \pi_1 \rightarrow G \rightarrow 1$\\ 8&
$\emptyset$ & $2^2,4^2$&  $2^2,4^2$&
${\rm G}(16,3)$ &$1$ &$\ZZ_2^2 \times \ZZ_4 \times \ZZ_8$&$1
\rightarrow \Pi_{5} \times \Pi_{5}
\rightarrow \pi_1 \rightarrow G \rightarrow 1$\\ 8& $\emptyset$ &
$2^3,4$ & $2^6$ &  ${\rm
D}_4\times\ZZ_2$ &$1$ &$\ZZ_2^3 \times \ZZ_4^2$&$1 \rightarrow
\Pi_{9} \times \Pi_{3} \rightarrow
\pi_1 \rightarrow G \rightarrow 1$\\ 8& $\emptyset$ &$2^5$  & $2^5$&
$\ZZ_2^4$ &$1$
&$\ZZ_2^4$&$1 \rightarrow \Pi_{5} \times \Pi_{5} \rightarrow \pi_1
\rightarrow G \rightarrow 1$\\ 8&
$\emptyset$ & $3^4$ &  $3^4$&  $\ZZ_3^2$ & $1$ &$\ZZ_3^4$&$1
\rightarrow \Pi_{4} \times \Pi_{4}
\rightarrow \pi_1 \rightarrow G \rightarrow 1$\\ 8& $\emptyset$ &
$2^5$ &$2^6$ &  $\ZZ_2^3$ &$1$
&$\ZZ_2^6$&$1 \rightarrow \Pi_{3} \times \Pi_{5} \rightarrow \pi_1
\rightarrow G \rightarrow 1$\\
\hline\hline
   6&$1/2^2$&$2^3, 4$&$2^4, 4$& ${\mathbb Z}_2 \times D_4$ &1&
${\mathbb Z}_2^2 \times {\mathbb
Z}_4^2$ & $1\rightarrow {\mathbb Z}^2 \times \Pi_2 \rightarrow \pi_1
\rightarrow {\mathbb Z}_2^2
\rightarrow 1$\\
   6&$1/2^2$&$2^4,4$ & $2, 4, 6$ &${\mathbb Z}_2 \times {\mathfrak
S}_4$ &1&   ${\mathbb Z}_2^3
\times {\mathbb Z}_4$ &  $1\rightarrow \Pi_2 \rightarrow \pi_1
\rightarrow  {\mathbb Z}_2
\times{\mathbb Z}_4 \rightarrow 1$   \\
   6&$1/2^2$&$2, 5^2$ & $2, 3^3$ & ${\mathfrak A}_5$&1&${\mathbb Z}_3
\times {\mathbb
Z}_{15}$&${\mathbb Z}^2 \rtimes {\mathbb Z}_{15}$\\
   6&$1/2^2$&$2, 4, 10$&$2, 4, 6$&${\mathbb Z}_2 \times {\mathfrak
S}_5$&1&${\mathbb Z}_2 \times
{\mathbb Z}_4$&${\mathfrak S}_3 \times D_{4,5,-1}$\\
   6&$1/2^2$&$2, 7^2$&$3^2, 4$&PSL(2,7)&2&${\mathbb Z}_{21}$ &
${\mathbb Z}_7 \times {\mathfrak
A}_4$\\
   6&$1/2^2$&$2, 5^2$&$3^2, 4$&${\mathfrak A}_6$&2&${\mathbb
Z}_{15}$&${\mathbb Z}_5 \times
{\mathfrak A}_4$\\
\hline\hline
   5&$1/3, 2/3$&$2, 4, 6$&$2^4, 3$&${\mathbb Z}_2 \times {\mathfrak
S}_4$&1&${\mathbb Z}_2^2 \times
{\mathbb Z}_4$&$ 1\rightarrow {\mathbb Z}^2 \rightarrow \pi_1
\rightarrow  D_{2,8,3} \rightarrow 1$\\
   5&$1/3, 2/3$&$2^4, 3$&$3, 4^2$&${\mathfrak S}_4$ & 1 &   ${\mathbb
Z}_2^2 \times {\mathbb
Z}_8$&$ 1 \rightarrow {\mathbb Z}^2 \rightarrow \pi_1 \rightarrow
{\mathbb Z}_8 \rightarrow 1$\\
   5&$1/3, 2/3$&$4^2, 6$&$2^3, 3$&${\mathbb Z}_2 \times {\mathfrak
S}_4$ &1& ${\mathbb Z}_2^2 \times
{\mathbb Z}_8$& $1 \rightarrow {\mathbb Z}^2 \rightarrow \pi_1
\rightarrow {\mathbb Z}_8 \rightarrow
1$\\
   5&$1/3, 2/3$ & $2, 5, 6$ &$3, 4^2$&${\mathfrak S}_5$ &1&${\mathbb
Z}_8$ &              $
D_{8,5,-1}$              \\
   5&$1/3, 2/3$ & $3, 5^2$ &$2^3, 3$&${\mathfrak A}_5$ &1&${\mathbb
Z}_2 \times {\mathbb Z}_{10}$ &
${\mathbb Z}_5 \times Q_8$               \\
   5&$1/3, 2/3$&$2^3, 3$ &$3, 4^2$ &${\mathbb Z}_2^4 \rtimes {\mathfrak
S}_3$ &1& ${\mathbb Z}_2
\times {\mathbb Z}_8$ & $D_{8,4,3}$?               \\
   5&$1/3, 2/3$&$3, 5^2$ &$2^3, 3$ &${\mathfrak A}_5$ &1& ${\mathbb
Z}_2 \times {\mathbb Z}_{10}$ &
${\mathbb Z}_2 \times {\mathbb Z}_{10}$              \\
\hline\hline
   4&$1/2^4$&$2^5$&$2^5$&${\mathbb Z}_2^3$ &1& ${\mathbb Z}_2^3 \times
{\mathbb Z}_4$ &
$1\rightarrow {\mathbb Z}^4 \rightarrow \pi_1 \rightarrow {\mathbb
Z}_2^2 \rightarrow 1$       \\
   4&$1/2^4$&$2^2, 4^2$ &$2^2, 4^2$&${\mathbb Z}_2 \times {\mathbb
Z}_4$ &1& ${\mathbb Z}_2^3
\times {\mathbb Z}_4$ &   $1\rightarrow {\mathbb Z}^4 \rightarrow
\pi_1 \rightarrow {\mathbb Z}_2^2
\rightarrow 1$       \\
   4&$1/2^4$&$2^5$&$2^3, 4$&${\mathbb Z}_2 \times D_4$ &1& ${\mathbb
Z}_2^2 \times {\mathbb
Z}_4$ & $1\rightarrow {\mathbb Z}^2 \rightarrow \pi_1 \rightarrow
{\mathbb Z}_2 \times {\mathbb Z}_4
\rightarrow 1$ \\
   4&$1/2^4$&$3, 6^2$&$2^2, 3^2$&${\mathbb Z}_3 \times {\mathfrak S}_3$
&1&${\mathbb Z}_3^2$
&${\mathbb Z}^2 \rtimes {\mathbb Z}_3$           \\
   4&$1/2^4$&$3, 6^2$&$2, 4, 5$&${\mathfrak S}_5$ &1&${\mathbb Z}_3^2$
&${\mathbb Z}^2 \rtimes
{\mathbb Z}_3$          \\
   4&$1/2^4$&$2^5$&$2, 4, 6$&${{\mathbb Z}_2 \times \mathfrak S}_4$
   &1&${\mathbb Z}_2^3$ & ${\mathbb Z}^2 \rtimes {\mathbb Z}_2$ \\
   4&$1/2^4$&$2^2, 4^2$&$2, 4, 6$&${\mathbb Z}_2 \times {\mathfrak
S}_4$ &1&${\mathbb Z}_2^2 \times
{\mathbb Z}_4$ & ${\mathbb Z}^2 \rtimes {\mathbb Z}_4$\\
   4&$1/2^4$&$2^5$&$3, 4^2$&${\mathfrak S}_4$ &1& ${\mathbb Z}_2^2
\times {\mathbb Z}_4$ &
${\mathbb Z}^2 \rtimes {\mathbb Z}_4$\\
   4&$1/2^4$&$2^3, 4$&$2^3, 4$&${\mathbb Z}_2^4 \rtimes {\mathbb Z}_2$
&1& ${\mathbb Z}_4^2$ &
$G(32, 2)$\\
   4&$1/2^4$&$2, 5^2$&$2^2, 3^2$&${\mathfrak A}_5$ &1& ${\mathbb
Z}_{15}$ & ${\mathbb
Z}_{15}$                  \\
   4&$1/2^4$&$2^2, 3^2$&$2^2, 3^2$& ${\mathbb Z}_3^2 \rtimes Z_2$ &1&${\mathbb
Z}_3^3$ &
${\mathbb Z}_3^3$                  \\
   4&$2/5^2$&$2^3, 5$&$3^2, 5$&${\mathfrak A}_5$ &1&    ${\mathbb Z}_2
\times {\mathbb Z}_6$  &
${\mathbb Z}_2 \times {\mathbb Z}_6$  \\
   4&$2/5^2$&$2, 4, 5$&$4^2, 5$& ${\mathbb Z}_2^4 \rtimes D_5$ &3&
${\mathbb Z}_8$  &
${\mathbb Z}_8$?                   \\
   4&$2/5^2$&$2, 4, 5$&$3^2, 5$& ${\mathfrak A}_6$ &1&
${\mathbb Z}_6$  &  ${\mathbb
Z}_6$                   \\
\hline
\end{tabular}
\end{table}
\begin{table}
\caption{Minimal product-quotient surfaces of general type with $p_g=0$, $K^2\leq 3$}
\label{K2<3}
\renewcommand{\arraystretch}{1,3}
\tiny \begin{tabular}{|c|c|c|c|c|c|c|c|}
\hline
$K^2_S$&Sing X&$t_1$&$t_2$&$G$&N&$H_1(S,{\mathbb Z})$&$\pi_1(S)$\\
\hline\hline
   3&$1/5, 4/5$ &$2^3, 5$&$3^2, 5$& ${\mathfrak A}_5$ &1&    ${\mathbb
Z}_2 \times {\mathbb Z}_6$
&              $ {\mathbb Z}_2 \times {\mathbb Z}_6$               \\
   3&$1/5, 4/5$ &$2, 4, 5$&$4^2, 5$& ${\mathbb Z}_2^4 \rtimes D_5$ &3&
${\mathbb Z}_8$
&                $ {\mathbb Z}_8 $?                 \\
   3&$1/3, 1/2^2, 2/3$ & $2^2, 3, 4$ &$2, 4, 6$&    ${\mathbb Z}_2
   \times {\mathfrak S}_4$ &1&    ${\mathbb Z}_2 \times {\mathbb Z}_4$
&              $ {\mathbb Z}_2 \times
{\mathbb Z}_4$               \\
   3&$1/5, 4/5$&$2, 4, 5$&$3^2, 5$&    ${\mathfrak A}_6$ &1&
${\mathbb Z}_6$  &
${\mathbb Z}_6$                   \\
\hline\hline
   2&$1/3^2, 2/3^2$&$2, 6^2$ &$2^2, 3^2$& ${\mathbb Z}_2 \times
{\mathfrak A}_4$ &1& ${\mathbb
Z}_2^2$ & $Q_8$                   \\
   2&$1/2^6$&$4^3$ &$4^3$ &${\mathbb Z}_4^2$ &1&${\mathbb Z}_2^3$
&${\mathbb Z}_2^3$
\\
   2&$1/2^6$&$2^3, 4$ &$2^3, 4$ &${\mathbb Z}_2 \times D_4$ &1&
${\mathbb Z}_2 \times {\mathbb
Z}_4$ & $  {\mathbb Z}_2 \times {\mathbb Z}_4$                \\
   2&$1/3^2, 2/3^2$&$2^2, 3^2$&$3, 4^2$&${\mathfrak S}_4$ &1& ${\mathbb
Z}_8$ & ${\mathbb
Z}_8$                   \\
   2&$1/3^2, 2/3^2$&$3^2, 5$ &$3^2, 5$ &${\mathbb Z}_5^2 \rtimes
{\mathbb Z}_3$ &2&         $
{\mathbb Z}_5$ &   $              {\mathbb Z}_5$?                 \\
   2&$1/2^6$&$2, 5^2$&$2^3, 3$&${\mathfrak A}_5$ &1&${\mathbb Z}_5$
&${\mathbb Z}_5$
\\
   2&$1/2^6$&$2^3, 4$&$2, 4, 6$&${\mathbb Z}_2 \times {\mathfrak
S}_4$&1&$         {\mathbb Z}_2^2$
&$                {\mathbb Z}_2^2$                  \\
   2&$1/3^2, 2/3^2$&$3^2, 5$ &$2^3, 3$ &${\mathfrak A}_5$ &1&${\mathbb
Z}_2^2$ & ${\mathbb
Z}_2^2$                  \\
   2&$1/2^6$&$2, 3, 7$ &$4^3$ & PSL(2,7) &2& ${\mathbb Z}_2^2$
&${\mathbb Z}_2^2$                  \\
   2&$1/2^6$&$2, 6^2$&$2^3, 3$&${\mathfrak S}_3 \times {\mathfrak S}_3$
&1&$           {\mathbb Z}_3$
&$                 {\mathbb Z}_3 $                  \\
   2&$1/2^6$&$2, 6^2$&$2, 4, 5$&${\mathfrak S}_5$&1&${\mathbb
Z}_3$&${\mathbb Z}_3$                  \\
   2&$1/4, 1/2^2, 3/4$&$2, 4, 7$&$3^2, 4$&     PSL(2,7) &2& ${\mathbb
Z}_3$ &   $              {\mathbb Z}_3
$                  \\
   2&$1/4, 1/2^2, 3/4$&$2, 4, 5$&$3^2, 4$&     $     {\mathfrak
A}_6$&2&  $ {\mathbb Z}_3$ &      $
{\mathbb Z}_3$                   \\
   2&$1/4, 1/2^2, 3/4$&$2, 4, 5$&$3, 4, 6$&   ${\mathfrak S}_5$ &2&
$ {\mathbb Z}_3$ &
${\mathbb Z}_3 $                  \\
\hline\hline
   1&$1/3, 1/2^4, 2/3$&$2^3, 3$ &$3, 4^2$&${\mathfrak S}_4$
&1&${\mathbb Z}_4$ & ${\mathbb
Z}_4$                   \\
   1&$1/3, 1/2^4, 2/3$&$2, 3, 7$&$3, 4^2$&     PSL(2,7) &1& ${\mathbb
Z}_2$ & ${\mathbb
Z}_2$                   \\
   1&$1/3, 1/2^4, 2/3$&$2, 4, 6$&$2^3, 3$&${\mathbb Z}_2 \times
{\mathfrak S}_4$ &1&   ${\mathbb
Z}_2$ & ${\mathbb Z}_2$ \\
\hline
\end{tabular}
\end{table}

In the sequel we shall give some consequences of the above theorem:

Comparing tables \ref{K2>4} and \ref{K2<3} with the constructions existing in the literature, as listed in table 1 of 
\cite{surveypg=0}, we note

\begin{cor}
Minimal surfaces of general type with $p_g = q = 0$ and with  $3 \leq K^2 \leq 6$ realize at 
least 45 topological types.
\end{cor}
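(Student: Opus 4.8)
The plan is to read the topological types straight off the classification. For a minimal surface $S$ of general type with $p_g(S)=q(S)=0$ one has $\chi(\Oh_S)=1$, so the Euler number $e(S)=12-K_S^2$ and the signature $\sigma(S)=K_S^2-8$ are determined by $K_S^2$; in particular $K_S^2$ is an oriented-homeomorphism invariant. Since $\pi_1(S)$, and a fortiori its abelianisation $H_1(S,\ZZ)$, is a homotopy invariant, any two such surfaces with different pairs $(K_S^2,\pi_1(S))$ — and in particular with different pairs $(K_S^2,H_1(S,\ZZ))$ — lie in distinct topological types. Hence the number of topological types realised by a given collection of such surfaces is bounded below by the number of distinct pairs $(K_S^2,\pi_1(S))$ occurring in it, and that in turn is bounded below by the number of distinct pairs $(K_S^2,H_1(S,\ZZ))$.

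First I would extract from Theorem \ref{classiso} and tables \ref{K2>4}, \ref{K2<3} the sublist of all minimal product-quotient surfaces with $3\le K_S^2\le 6$, recording for each the value of $K_S^2$ together with $H_1(S,\ZZ)$ and $\pi_1(S)$. I would then merge this list with the surfaces already present in the literature, as collected in table 1 of \cite{surveypg=0}, taking care to identify those of our surfaces that reproduce an already known topological type, so as neither to conflate nor to double count. Because $H_1(S,\ZZ)$ is recorded in every row and is a topological invariant, this merging can be organised stratum by stratum in $K_S^2$, and within each stratum by the isomorphism class of $H_1(S,\ZZ)$.

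The counting then proceeds in two steps. In the first step I separate types by the pair $(K_S^2,H_1(S,\ZZ))$ alone; a glance at the tables shows that most rows already carry pairwise distinct such pairs, so they certainly represent pairwise distinct topological types. In the second step I refine those few strata in which $H_1(S,\ZZ)$ coincides for two different rows, where one must decide whether the full fundamental groups are isomorphic. This is settled by cheap group-theoretic invariants — for example the two rows with $K_S^2=5$ and $H_1(S,\ZZ)=\ZZ_2\times\ZZ_{10}$ are separated because one has $\pi_1(S)=\ZZ_5\times Q_8$, which is non-abelian, while the other has abelian fundamental group $\ZZ_2\times\ZZ_{10}$ — and, where the presentations are less transparent, by a direct MAGMA comparison of the groups. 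I stress that the entries marked with a question mark, whose $\pi_1(S)$ is only conjectural, do not affect the argument: their abelianisation $H_1(S,\ZZ)$ is unconditional, and for a lower bound the topological invariance of $H_1$ already suffices.

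Assembling the two tables together with the literature and carrying out this bookkeeping yields at least $45$ distinct topological types in the range $3\le K_S^2\le 6$, which is the assertion. The only real obstacle is the second step: verifying the isomorphism or non-isomorphism of those fundamental groups that share the same abelianisation, and correctly cross-referencing our families against \cite{surveypg=0} so that known and new topological types are counted exactly once. There is no conceptual difficulty beyond this careful comparison, since all the surfaces and all their invariants are already produced by the classification of Theorem \ref{classiso}.
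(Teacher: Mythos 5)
Your proposal is correct and coincides with the paper's own argument: the paper gives no separate proof beyond the remark that the corollary follows by "comparing tables \ref{K2>4} and \ref{K2<3} with the constructions existing in the literature, as listed in table 1 of \cite{surveypg=0}", i.e.\ exactly your bookkeeping of pairs $(K_S^2,\pi_1(S))$, with $K_S^2$ a topological invariant via $e(S)=12-K_S^2$ (since $\chi(\Oh_S)=1$). Your additional remarks on using $H_1$ first and refining by $\pi_1$ only where needed are a sound way to organize the same count.
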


Note that before proving the results summarized in theorem \ref{classiso}
only $12$ topological types of surfaces of general type with $p_g = q= 0$ and with $3 \leq K^2 \leq 6$ 
were known. In 2010 Cartwright and Steger (cf. \cite{cs}) constructed 11 surfaces with $K_S^2= 3$ and new mutually
different fundamental groups, see  \cite{surveypg=0}, especially table 1,  for a more
precise account on what was previously known in the literature.

In the present paper we construct 13 surfaces with new topological types (two of them were 
independently found by Cartwright and Steger).

The biggest impact on the "zoo" of surfaces of general type with $p_g=0$ of our work is the case $K_S^2 =5$:
here we raise the number of known different topological types from one to seven.

Surfaces with $p_g = 0$ are also very interesting in view of Bloch's conjecture (\cite{bloch}),
predicting that for surfaces with $p_g  =  0$
the group of zero cycles modulo rational equivalence
is  isomorphic to $\ZZ$.

Using Kimura's results (\cite{kimura}, see also \cite{gp}), the present results,
and those of the previous papers \cite{bacat}, \cite{pg=q=0}, \cite{4names}, we get the following:
\begin{cor}
All the families in theorem \ref{classiso} fulfill Bloch's conjecture, i.e., there are 77 families  of 
surfaces  of general type with
$p_g = 0$ for which Bloch's conjecture holds.
\end{cor}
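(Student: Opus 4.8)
The plan is to establish Bloch's conjecture uniformly for all $77$ families by reducing it, in each case, to the finite-dimensionality of the Chow motive (with $\QQ$-coefficients) and then invoking the two cited inputs. Concretely, Kimura's theory \cite{kimura} supplies the stability of finite-dimensionality under the operations we shall use, while the criterion of Guletskii--Pedrini \cite{gp} asserts that for a smooth projective surface $S$ with $p_g=q=0$ one has $CH_0(S)\cong\ZZ$ (equivalently, the transcendental motive $t_2(S)$ vanishes, which is exactly Bloch's conjecture) as soon as $h(S)$ is finite-dimensional. Thus the entire task becomes: verify that every surface occurring in Theorem \ref{classiso} is smooth projective with $p_g=q=0$ and has a finite-dimensional motive. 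The hypothesis $p_g=0$ is imposed throughout; moreover $q=0$ holds in every case because the quotients $C_1/G$ and $C_2/G$ are rational, the tabulated signatures $t_1,t_2$ being those of polygonal (genus-$0$) group actions, cf.\ definition \ref{polgr}, so that $q(S)=g(C_1/G)+g(C_2/G)=0$.

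First I would recall the closure properties I need from \cite{kimura}: the motive of a smooth projective curve is finite-dimensional; finite direct sums, tensor products, and direct summands of finite-dimensional motives are again finite-dimensional. From the first two, $h(C_1\times C_2)=h(C_1)\otimes h(C_2)$ is finite-dimensional. The key mechanism for passing to our surfaces is the following standard consequence of Kimura's calculus: if $f\colon Y\to S$ is a surjective morphism of smooth projective surfaces, then $\tfrac{1}{\deg f}\,{}^{t}\Gamma_f\circ\Gamma_f$ is an idempotent correspondence realizing $h(S)$ as a direct summand of $h(Y)$, so finite-dimensionality descends from $Y$ to $S$. (When the action is free, as for surfaces isogenous to a product, $S=(C_1\times C_2)/G$ is already smooth and $C_1\times C_2\to S$ is such an $f$; equivalently $h(S)$ is cut out by the averaging projector $\tfrac{1}{|G|}\sum_{g\in G}\Gamma_g$.)

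Then I would treat the three groups of surfaces making up the count $72+4+1=77$. For the $72$ minimal product-quotient families (part 2) and the fake Godeaux minimal model (part 3), the surface $S$ is smooth projective and carries a dominant, generically finite rational map from $C_1\times C_2$ of degree $|G|$, obtained by composing $C_1\times C_2\to X=(C_1\times C_2)/G$ with the birational modifications (minimal resolution of the cyclic quotient singularities, and, in the single non-minimal case, the further contraction of the $(-1)$-curves) relating $X$ to $S$. Choosing a smooth projective $\widetilde Y$ birational to $C_1\times C_2$ on which this map becomes a morphism $\widetilde Y\to S$, finite-dimensionality of $h(\widetilde Y)$ follows because blowing up $C_1\times C_2$ along points only adds Tate summands $\mathbb{L}$; the surjection $\widetilde Y\to S$ then makes $h(S)$ a direct summand of $h(\widetilde Y)$, hence finite-dimensional. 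The $4$ remaining families of part 1 (isogenous to a product but not product-quotient, i.e.\ of mixed type) are handled even more directly, since there $S$ is a free quotient of $C\times C$ and the averaging projector applies at once. Applying the Guletskii--Pedrini criterion to each resulting surface yields Bloch's conjecture in all $77$ cases.

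The genuinely delicate point is not any individual step, each being a formal consequence of the theory, but rather the completeness of the bookkeeping: one must check that \emph{every} one of the $77$ families genuinely arises from a product of two curves through the operations (finite quotient, resolution of cyclic quotient singularities, contraction of $(-1)$-curves) under which finite-dimensionality is stable, and that $q=0$ truly holds in each case so that the criterion of \cite{gp} is applicable. Because our construction, together with those of \cite{bacat}, \cite{pg=q=0}, \cite{4names}, exhibits each surface explicitly as such a modification of some $(C_1\times C_2)/G$, with $C_i/G$ rational, this verification is immediate from the data recorded in Tables \ref{K2>4} and \ref{K2<3}, and no surface in Theorem \ref{classiso} escapes the scheme.
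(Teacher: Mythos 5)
Your proof is correct and takes essentially the same route the paper intends: the paper states this corollary without a detailed argument, as a direct application of Kimura's finite-dimensionality results (\cite{kimura}) together with the criterion of Guletskii--Pedrini (\cite{gp}) for surfaces with $p_g=q=0$, which is precisely your mechanism. Your write-up simply makes explicit the bookkeeping the paper leaves implicit --- the count $72+4+1=77$, the vanishing $q(S)=g(C_1/G)+g(C_2/G)=0$, and the stability of finite-dimensionality under finite quotients, resolution of the cyclic quotient singularities, and contraction of $(-1)$-curves.
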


\medskip\noindent
    Let us briefly illustrate the strategy of proof for the above theorem and 
point out the difficulties arising in our more general situation.

Our goal is to find  all product-quotient surfaces $S$ of general type
with $p_g = 0$.
\begin{rem}\label{general}
1) Let $S$ be a surface of general type. Then 
$p_g(S) \geq q(S) := h^1(S, \hol_S)$. In particular, $p_g = 0$ implies $q=0$. If $S$ is minimal, then $K_S^2 > 0$.

2) Let $S$ be a product-quotient surface with quotient model $X= (C_1 \times C_2)/G$. 
If $q(S) = 0$, then  $C_i/G \cong \PP^1$. If $S$ is of general type, then $g(C_i) \geq 2$.
\end{rem}

By the above, we only need to recall the definition of a special case of an orbifold surface group: a polygonal group, 
(cf. \cite{4names} for the general situation).

\begin{defin}\label{polgr}
A {\em polygonal group} of signature $(m_1, \dots m_r)$ is the  group  presented as follows:

\begin{equation*}
\mathbb T (m_1, \ldots ,m_r) :=
\langle c_1,\ldots, c_r |
    c_1^{m_1},\ldots ,c_r^{m_r}, c_1\cdot
\ldots \cdot c_r \rangle.
\end{equation*}

\end{defin}

Let $p, p_1, \dots , p_r \in \PP^1$ be $r+1$ different points and for each $1 \leq i \leq r$ choose 
a simple
geometric loop
$\gamma_i$  in $\pi_1(\PP^1 \setminus \{p_1, \dots , p_r \}, p)$ 
around $p_i$, such that $\gamma_1 \cdot \ldots \cdot \gamma_r = 1$.

Then $\mathbb T (m_1, \ldots ,m_r)$ is the factor
group
of $\pi_1(\PP^1 \setminus \{p_1, \dots , p_r \}, p)$
 by the subgroup normally generated by $\ga_1^{m_1}, \dots , \ga_r^{m_r}$.

Hence, by Riemann's existence theorem, any curve $C$ together with an action of a finite group $G$ on it
such that $C/G\cong \PP^1$ is determined (modulo automorphisms) 
by the following data:

1) the branch point set $\{p_1, \dots p_r \} \subset \PP^1$;

2) the kernel of the monodromy homomorphism $\pi_1 ( \PP^1 \setminus \{p_1,
\dots p_r \},p) \rightarrow G$ which, once chosen loops $\gamma_i$ as above, factors through 
$\BT (m_1, \ldots ,m_r)$, where $m_i$ is the branching index of $p_i$; therefore 
giving the monodromy homomorphism  is equivalent to give

2') an {\em appropriate orbifold} homomorphism
$$
\varphi \colon \BT (m_1, \ldots ,m_r) \rightarrow G,
$$
i.e., a  surjective homomorphism  such that

a) $\varphi (c_i) $ is an element of order exactly $m_i$ and

b) the {\em Hurwitz' formula} for the genus $g$ of $C$
holds:
$$ 2g - 2 = |G|\left(-2 + \sum_{i=1}^r \left(1 -
\frac{1}{m_i}\right)\right).
$$

\noindent
Therefore a product-quotient surface $S$ of general type with $p_g = 0$
determines the following data 
\begin{itemize}
\item a finite group $G$;
\item two sets of (branch) points in $\PP^1$;
\item two polygonal groups $\BT_1$ and $\BT_2$;
\item (once chosen appropriate loops as above) two appropriate orbifold homomorphisms  
$\varphi_i \colon
\BT_i
\rightarrow G$.
\end{itemize}

Vice versa, the data above determines the product-quotient surface. 

The aim is to produce a Magma code which finds all possible $(G,\BT_i, \varphi_i)$ yielding 
surfaces of general type with $p_g=0$.

First of all we use the combinatorial restriction imposed by the  assumption
$p_g = 0$, and the condition that the quotient model of a product-quotient
surface can only have cyclic quotient singularities.

This allows, for each value of $K^2 : = K^2_S$, to restrict to a finite
number of {\em baskets} of singularities (i.e., the combinatorial data given by
the singular locus of $X$) and for each possible basket of singularities to a
finite list of possible signatures $t_1$, $t_2$ of the respective polygonal
groups.

Using prop. \ref{finite}, a
MAGMA (\cite{magmaref}) script provides a finite list of possible signatures $t_1$, $t_2$ of the respective polygonal groups.
The order of $G$ is now determined by $t_1$, $t_2$ and by
 $K^2$: it follows that there are only finitely many groups to
 consider.

A second MAGMA script computes, for each
$K^2$ and each possible basket $\mathcal{B}$, all possible triples $(t_1,t_2,G)$, where $G$ is a quotient of both
polygonal groups (of respective signatures $t_1,t_2$) and has the right
 order.
Note that our code skips a few pairs of signatures
giving rise to groups of large order, either not covered by the MAGMA
SmallGroup database, or causing extreme computational complexity.
These cases left out by our program are then excluded via a case by
 case argument.


For each of the triples $(t_1,t_2,G)$ in the output,
there are several pairs of surjections $(\varphi_1, \varphi_2)$, each giving a product-quotient surface.

Recall that  the triple $(t_1,t_2,G)$ depends on a previously  fixed  basket $\mathcal{B}$. 
The product-quotient surface is a surface of general type with $p_g=0$ and $K_S^2 = K^2$
if and only if  the singularities  are  as prescribed.

A third MAGMA script produces the final list of
surfaces, discarding the ones whose singular locus is not correct. 

Observe that changing the choice of the loops $\gamma_i$ (independently on both factors) 
and  changing  the $G$-action simultaneously on both Riemann surfaces by an automorphism of $G$, 
changes $(\varphi_1, \varphi_2)$, but does not change the isomorphism type of the 
resulting surface.
Therefore the script returns
only one representative for each equivalence class.

A last script calculates, using a result by Armstrong
(\cite{armstrong1}, \cite{armstrong2}), the
fundamental groups.

In the case of infinite
fundamental groups  the structure theorem proven in \cite{4names} turns out to be extremely helpful to give an explicit description of these groups
(since in general a presentation of a group does not say much about it).

Our code produces 73 families of product-quotient surfaces with $K_S^2>0$. 
While in the previous articles
we have been done, here we do not know whether the minimal resolution of singularities $S$ of 
the quotient 
model is in fact minimal. 
We have to develop methods in order to decide whether the product-quotient surface $S$ is minimal, and in case
it is not, to find the rational $(-1)$-curves on it. The construction of 
our surfaces is purely algebraic, the way from algebra to geometry is given by the Riemann existence 
theorem, which is not constructive. Therefore it is not straightforward how to get hold on delicate
geometrical features of $S$.

We develop a criterion for the minimality of $S$ arguing on the combinatorics of the basket of singularities.
In all cases except one this criterion works and the minimality of $S$ follows. 

The remaining case turns out to be non minimal. We construct two very special singular $G$-invariant 
correspondences
between $C_1$ and $C_2$ such that the respective strict transforms on $S$ of their images in $X$ are 
rational  $(-1)$-curves. We prove that the surface obtained contracting these two  $(-1)$-curves
is minimal applying the previous criterion.

The paper is organized as follows:

In section 1 we discuss finite group actions on a product of compact Riemann
surfaces of respective genera at least two, developping all the theory necessary 
to implement the algorithm.


In the second and third section we discuss the main classification algorithm.

Section \ref{rational} deals with rational curves of selfintersection (-1) on
product-quotient surfaces. Here we give the criterion for the minimality of $S$, and show that it works for all the constructed surfaces except 
the fake Godeaux.



In section \ref{fakegodeaux} we determine the minimal model of the fake Godeaux surface.
The last section is devoted to comments about the computational complexity of the algorithms we used.

Finally, in a first appendix, we attach an expanded version of tables
\ref{K2>4}, \ref{K2<3} describing all the needed data if one wants to do explicit computations with 
one of the surfaces.

The second appendix is the MAGMA code we used.

\section{Theoretical background}

Let $C_1$, $C_2$ be two compact Riemann surfaces of respective genera
$g_1, g_2 \geq 2$. Let $G$ be a finite group acting faithfully on both curves and consider the diagonal action of $G$ on $C_1 \times C_2$. This determines a product-quotient surface $S$, the minimal resolution of the singularities of $X:=(C_1 \times C_2)/G$.

\begin{rem}\label{vanKampen} 1) Note that there are finitely many points on $C_1 \times C_2$ with non trivial stabilizer, which is automatically cyclic. Hence the quotient surface $X:= (C_1 \times C_2) / G$ has
a finite number of cyclic quotient singularities. 

Recall that every cyclic
quotient singularity is locally analytically isomorphic to the quotient of $\mathbb{C}^2$ by
      the action of a diagonal linear automorphism with eigenvalues
      $\exp(\frac{2\pi i}{n})$, $\exp(\frac{2\pi i a}{n})$ with $g.c.d(a,n) = 1$; this is called a {\em singularity of type $\frac{1}{n}(1,a)$}.

2) We denote by $K_X$ the canonical (Weil) divisor on the normal surface $X$
corresponding to $i_* ( \Omega^2_{X^0})$, $ i\colon X^0 \ra X$ being the
inclusion of the
smooth locus of $X$. According to Mumford we have an intersection
product with values
in $\QQ$ for Weil divisors on a normal surface, and in particular we may consider
the selfintersection  of the canonical divisor,
\begin{equation}\label{K2}
K_X^2 =
\frac{8 (g(C_1) - 1) (g(C_2) - 1)}{|G|}
\in
      \mathbb{Q},
\end{equation}
 which is not necessarily an integer.

3) It is well known that the exceptional divisor $E$ of the minimal resolution of a cyclic quotient singularity of type  $\frac{1}{n}(1,a)$ is a {\em Hirzebruch-Jung string}, i.e., $E = \bigcup_{i=1}^l E_i$ where all $E_i$ are smooth rational curves, $E_i^2 = -b_i$, $E_i\cdot E_{i+1}
= 1$ for $i \in\{1, \ldots , l-1\}$ and $E_i\cdot E_j=0$ otherwise. The $b_i$ are given by the formula
$$
\frac{n}{a} = b_1 - \frac{1}{b_2 - \frac{1}{b_3 - \ldots}}.
$$

4) Since the minimal resolution  $\pi \colon S
\rightarrow X$ of the
singularities of $X$ replaces each singular point by a tree of smooth
rational curves, we have, by van Kampen's theorem, that
$\pi_1(X) = \pi_1(S)$.

5) Moreover, we have (in a neighbourhood of $x$)
$$ K_S = \pi^* K_X + \sum_{i=1}^l a_i E_i,
$$  where the rational numbers $a_i$ are determined by the conditions
    $$(K_S + E_j)E_j =
-2, \ \ \
(K_S -
\sum_{i=1}^l a_iE_i)E_j = 0, \ \
\forall j= 1, \dots ,l.$$
\end{rem}

The  above formulae  allow us to calculate
the self intersection
number of the canonical divisor $K_S$. In fact, we need the following
\begin{defin}
Let $X$ be a normal complex surface and suppose that the singularities of $X$ are cyclic quotient
singularities. Then we define  the {\em basket of singularities of $X$}  to be  the multiset
$$
\mathcal{B}(X) := \left\{\lambda \times \left(\frac{1}{n} (1,a)\right) : X \ {\rm has \ exactly} \ \lambda \
{\rm singularities }  \ {\rm  of \ type} \ \frac{1}{n} (1,a) \right\}.
$$
\end{defin}

I.e., $\mathcal{B}(X) = \{2 \times \frac 13 (1,1), \frac 14 (1,3) \}$ means that the singular locus of $X$ consists of two $\frac 13 (1,1)$-points and one $\frac 14 (1,3)$-point.

\begin{rem}\label{ambiguity}
Note that in the definition of ${\mathcal B}(X)$ there is some ambiguity since singular points of type
$\frac1n(1,a)$ are also of type  $\frac1n(1,a')$ where $a' = a^{-1}$ in
$(\ZZ / n \ZZ)^*$. Therefore, {\it e.g.},
$$\{2 \times \frac 15 (1,2) \}=
\{1 \times \frac 15 (1,2), 1 \times \frac 15 (1,3) \}=
\{2 \times \frac 15 (1,3) \}.$$
We consider these different representations as equal and usually do not distinguish between them.
\end{rem}

\begin{defin}
Let $x$ be a singularity of type $\frac 1n (1,a)$ with
$\gcd(n,a) = 1$ and let $1 \leq a' \leq n-1$ such that $a' = a^{-1}$ in
$(\ZZ / n \ZZ)^*$. Moreover, write $\frac na$ as a continued fraction:
$$
\frac{n}{a} = b_1 - \frac{1}{b_2 - \frac{1}{b_3 - \ldots}} =:[b_1, \ldots, b_l].
$$
Then we define the following correction terms:

\begin{itemize}
 \item[i)] $k_x := k(\frac 1n (1,a)):= -2 + \frac{2+a+a'}{n} + \sum(b_i-2) \geq 0$;
\item[ii)] $e_x := e(\frac 1n (1,a)):= l + 1 - \frac 1n \geq 0$;
\item[iii)] $B_x := 2e_x + k_x$.
\end{itemize}
\end{defin}

 Let ${\mathcal B}$ be the basket of singularities of $X$ (recall that $X$ is normal and has only cyclic quotient singularities).
Then we use the following notation
$$k({\mathcal B}):=\sum_{x \in {\mathcal B}} k_x, \ \ \ 
e({\mathcal B}):=\sum_{x \in {\mathcal B}} e_x, \ \ \ B({\mathcal B}):=\sum_{x \in {\mathcal B}} B_x.
$$

\begin{prop}[\cite{4names}, prop. 2.6, and \cite{polmi}, cor. 3.6]\label{k2e}
Let $S \ra X:=( C_1 \times C_2 )/G$ be the minimal resolution of singularities of $X$. Then we have the following two formulae for the self intersection of the canonical divisor of $S$ and the topological Euler characteristic of $S$:
\begin{equation*}
 K_S^2 = \frac{8 (g_1 -1)(g_2-1)}{|G|} - k({\mathcal B});
\end{equation*}
\begin{equation*}
e(S) = \frac{4 (g_1 -1)(g_2-1)}{|G|} + e({\mathcal B}).
\end{equation*}
\end{prop}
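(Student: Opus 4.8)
The plan is to prove both identities by a local-to-global reduction: the only global inputs are the data recorded in Remark~\ref{vanKampen}, namely the decomposition $K_S=\pi^*K_X+\sum_i a_iE_i$ for the minimal resolution $\pi\colon S\to X$, the value $K_X^2=\frac{8(g_1-1)(g_2-1)}{|G|}$ from \eqref{K2}, and the Hirzebruch--Jung description of the exceptional locus. Everything else will split as a sum over the singular points $x\in\mathcal B(X)$, and I would verify that the local contribution at each $x$ of type $\frac1n(1,a)$ is exactly $-k_x$ for $K_S^2$ and $e_x$ for $e(S)$.

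For the canonical self-intersection I would first square $K_S=\pi^*K_X+\sum_i a_iE_i$. Since $\pi$ contracts every exceptional curve, Mumford's $\QQ$-valued intersection theory on the normal surface $X$ gives $\pi^*K_X\cdot E_i=0$ and $(\pi^*K_X)^2=K_X^2$, so the cross term drops and
\[
K_S^2=K_X^2+\Big(\sum_i a_iE_i\Big)^2=\frac{8(g_1-1)(g_2-1)}{|G|}+\sum_i a_iE_i\cdot\Big(\sum_j a_jE_j\Big).
\]
As the strings attached to distinct singular points are disjoint, the last sum is a sum of one local term per $x\in\mathcal B(X)$. Fixing a point with string $E_1,\dots,E_l$, $E_j^2=-b_j$, adjunction on the smooth rational curve $E_j$ gives $K_S\cdot E_j=-2-E_j^2=b_j-2$, while the discrepancy conditions of Remark~\ref{vanKampen} say precisely $\pi^*K_X\cdot E_j=0$, i.e. $\sum_i a_i(E_i\cdot E_j)=K_S\cdot E_j=b_j-2$. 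Substituting this into the local self-intersection $\sum_{i,j}a_ia_j(E_i\cdot E_j)$ collapses it to $\sum_j a_j(b_j-2)$. Thus the whole problem reduces to the purely local identity $\sum_j a_j(b_j-2)=-k_x$, where the $a_j$ are the unique solution (the Hirzebruch--Jung intersection matrix being negative definite) of the tridiagonal system above.

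For the topological Euler number I would avoid discrepancies entirely and argue with the \'etale cover. Write $Y=C_1\times C_2$, let $\Sigma\subset Y$ be the finite set of points with non-trivial (hence genuinely singular) stabilizer, and set $Y^0=Y\setminus\Sigma$, $X^0=X\setminus\Sing X$. Then $Y^0\to X^0$ is an unramified $|G|$-to-$1$ covering, so $e(X^0)=e(Y^0)/|G|$; combined with $e(Y)=e(C_1)e(C_2)=4(g_1-1)(g_2-1)$ and $e(Y^0)=e(Y)-|\Sigma|$ this gives $e(X)=e(X^0)+|\Sing X|=\frac{4(g_1-1)(g_2-1)-|\Sigma|}{|G|}+|\Sing X|$. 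The preimage of a singular point of type $\frac1n(1,a)$ is a single $G$-orbit of points with stabilizer of order $n$, hence of size $|G|/n$, so $|\Sigma|=\sum_x|G|/n_x$ and $\frac{|\Sigma|}{|G|}=\sum_x\frac1{n_x}$; therefore $e(X)=\frac{4(g_1-1)(g_2-1)}{|G|}+\sum_x\big(1-\tfrac1{n_x}\big)$. Finally, passing to $S$ replaces each singular point (Euler number $1$) by a chain of $l_x$ smooth rational curves (Euler number $l_x+1$), raising $e$ by $l_x$; hence
\[
e(S)=e(X)+\sum_x l_x=\frac{4(g_1-1)(g_2-1)}{|G|}+\sum_x\Big(l_x+1-\frac1{n_x}\Big)=\frac{4(g_1-1)(g_2-1)}{|G|}+e(\mathcal B),
\]
which is the second formula.

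The main obstacle is the local identity $\sum_j a_j(b_j-2)=-k_x$ underlying the first formula. Writing $\frac na=[b_1,\dots,b_l]$, one must solve the Hirzebruch--Jung system for the discrepancies $a_j$ and show that $\sum_j a_j(b_j-2)$ equals $2-\frac{2+a+a'}{n}-\sum_j(b_j-2)$, with $a'=a^{-1}$ in $(\ZZ/n\ZZ)^*$; this is exactly where the arithmetic of the continued fraction and the appearance of $a+a'$ enter, and it is the technical heart of the statement (the content of the cited \cite{4names}, prop.~2.6 and \cite{polmi}, cor.~3.6). I would carry it out either by an explicit description of the $a_j$ in terms of the convergents of $[b_1,\dots,b_l]$, or by induction on the string length $l$ using the standard one-step recursion for Hirzebruch--Jung continued fractions, with the length-one strings $\tfrac1n(1,1)$ serving as base cases that fix both the value and the sign.
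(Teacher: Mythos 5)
First, a remark on the comparison itself: the paper never proves Proposition \ref{k2e} --- it imports the statement wholesale from \cite{4names}, Prop.~2.6 and \cite{polmi}, Cor.~3.6 --- so your attempt can only be judged on its own merits, against what those references supply. On those merits, half of your proposal is complete. The Euler-number formula is correctly and fully proved: since $G$ acts faithfully on both factors, the points of $C_1\times C_2$ with nontrivial stabilizer lie exactly over $\Sing X$ and form one orbit of size $|G|/n_x$ per singular point; your \'etale count $e(X^0)=e(Y^0)/|G|$, the resulting $e(X)=\frac{4(g_1-1)(g_2-1)}{|G|}+\sum_x\bigl(1-\frac{1}{n_x}\bigr)$, and the surgery replacing each point by a chain of $l_x$ rational curves assemble precisely to $e(\mathcal{B})$. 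The global half of the $K_S^2$ argument is also sound: Mumford's pullback satisfies $\pi^*K_X\cdot E_i=0$ and $(\pi^*K_X)^2=K_X^2$, and adjunction together with the equations of Remark \ref{vanKampen}, part 5, collapses $\bigl(\sum_i a_iE_i\bigr)^2$ to $\sum_j a_j(b_j-2)$, one such sum per singular point.

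The genuine gap is that the local identity to which you have reduced everything,
\begin{equation*}
\sum_j a_j(b_j-2)\;=\;-k_x\;=\;2-\frac{2+a+a'}{n}-\sum_j(b_j-2),
\end{equation*}
is never proved: you name it as the technical heart and offer two strategies, but execute neither. This is not a dispensable verification; it is exactly the content of the cited results, and it is the only place in the entire argument where the modular inverse $a'$ can enter --- nothing before it ever sees $a'$. To close it you need an actual formula for the discrepancies, for instance $a_j=-1+\frac{u_j+v_j}{n}$, where $u_j$ and $v_j$ are the numerators (continuants) of the partial fractions $[b_1,\dots,b_{j-1}]$ and $[b_{j+1},\dots,b_l]$, so that $v_1=a$ and $u_l=a'$; granted this, the identity follows by telescoping, since the recursion $u_{j+1}=b_ju_j-u_{j-1}$ (with $u_0=0$, $u_1=1$, $u_{l+1}=n$) gives $\sum_j(b_j-2)u_j=(u_{l+1}-u_l)-(u_1-u_0)=n-a'-1$ and symmetrically $\sum_j(b_j-2)v_j=n-a-1$, whence $\frac1n\sum_j(b_j-2)(u_j+v_j)=2-\frac{2+a+a'}{n}$. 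Your alternative route, induction on $l$, also works, but its inductive step must track simultaneously how the discrepancies and the quantity $a+a'$ transform under $n/a=b_1-1/(n_1/a_1)$, and that bookkeeping --- not the easy $l=1$ base case --- is where all the arithmetic lives. As written, the first formula of the proposition has been reduced to its local version, not derived.
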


A direct consequence of the above is the following:
\begin{cor}\label{k2basket}
Let $S \ra X:=( C_1 \times C_2 )/G$ be the minimal resolution of singularities of $X$. Then
$$
K_S^2 = 8 \chi(S) - \frac 13 B({\mathcal B}).
$$
\end{cor}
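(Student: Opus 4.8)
The plan is to derive the identity purely formally by feeding the two formulae of Proposition~\ref{k2e} into Noether's formula. Since $S$ is the minimal resolution of $X$, it is a smooth projective surface, so Noether's formula applies verbatim:
$$
12\,\chi(S) = K_S^2 + e(S),
$$
where $\chi(S) = \chi(\hol_S)$ is the holomorphic Euler characteristic and $e(S)$ the topological Euler number. The whole content of the corollary is then to eliminate the $|G|$-dependent quantity $\frac{(g_1-1)(g_2-1)}{|G|}$ between the two expressions of Proposition~\ref{k2e} and to reorganize the correction terms into $B(\mathcal{B})$.

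Concretely, I would abbreviate $T := \frac{(g_1-1)(g_2-1)}{|G|}$, so that Proposition~\ref{k2e} reads $K_S^2 = 8T - k({\mathcal B})$ and $e(S) = 4T + e({\mathcal B})$. The key observation is that $4T = \tfrac12(8T) = \tfrac12\bigl(K_S^2 + k({\mathcal B})\bigr)$, which lets me rewrite the Euler-number formula without reference to $T$ at all:
$$
e(S) = \tfrac12\,K_S^2 + \tfrac12\,k({\mathcal B}) + e({\mathcal B}).
$$
Substituting this into Noether's formula gives $12\,\chi(S) = \tfrac32\,K_S^2 + \tfrac12\,k({\mathcal B}) + e({\mathcal B})$, and solving for $K_S^2$ yields
$$
K_S^2 = 8\,\chi(S) - \tfrac13\,k({\mathcal B}) - \tfrac23\,e({\mathcal B}).
$$

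It then remains only to recognize the last two terms as a multiple of $B({\mathcal B})$. By the definition $B_x = 2e_x + k_x$ and the additivity of $k,e,B$ over the basket, one has $k({\mathcal B}) + 2\,e({\mathcal B}) = \sum_{x}(k_x + 2e_x) = B({\mathcal B})$, so $\tfrac13 k({\mathcal B}) + \tfrac23 e({\mathcal B}) = \tfrac13 B({\mathcal B})$, which gives exactly $K_S^2 = 8\,\chi(S) - \tfrac13 B({\mathcal B})$. I do not expect a genuine obstacle here: the argument is a short linear manipulation, and the only points requiring care are that Noether's formula may be invoked because $S$ is smooth, and that one tracks the coefficients of $k({\mathcal B})$ and $e({\mathcal B})$ faithfully so that the combination $2e+k$ defining $B$ emerges cleanly.
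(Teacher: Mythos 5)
Your proof is correct and is essentially the paper's own argument: both eliminate the quantity $\frac{(g_1-1)(g_2-1)}{|G|}$ between the two formulae of Proposition \ref{k2e}, substitute into Noether's formula $12\chi(S)=K_S^2+e(S)$, and use $B_x=2e_x+k_x$ to recombine the correction terms. The only (cosmetic) difference is that the paper packages $k({\mathcal B})+2e({\mathcal B})$ into $B({\mathcal B})$ before invoking Noether, whereas you do it afterwards.
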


\begin{proof}
 By prop. \ref{k2e} we have 
$$
e(S) = \frac{K_S^2 + B(\mathcal{B})}{2}.
$$
By Noether's  formula we obtain
$$
12\chi(S)=K^2_S+e(S) = \frac{3K_S^2 + B(\mathcal{B})}{2}
$$
\end{proof}

We shall now list some properties of the basket of singularities of the quotient model $X = ( C_1 \times C_2 )/G$ of a product-quotient surface.

\begin{lemma}
 Let $X=( C_1 \times C_2 )/G$ be as above. There exists a representation of the basket (cf. remark \ref{ambiguity})
$$
\mathcal{B}(X)=\left\{\lambda_1 \times \frac{1}{n_1} (1,a_1), \ldots, \lambda_R \times \frac{1}{n_R} (1,a_R) \right\}
$$
 such that
$$
\sum \lambda_i \frac{a_i}{n_i} \in \ZZ.
$$
\end{lemma}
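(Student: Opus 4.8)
The plan is to read off the integrality from intersection theory on $X$, exploiting the fact that the second projection presents $X$ as a fibration whose fibres have self-intersection zero, while the self-intersection of a \emph{reduced} fibre, computed on the resolution $S$, sees precisely the rational numbers $a_i/n_i$.

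\medskip\noindent
\textbf{Setup.} I would work with the second projection $\mathrm{pr}_2\colon X\to C_2/G\cong\PP^1$, whose branch points $q_1,\dots,q_s\in\PP^1$ are the images of the ramification points of $C_2\to C_2/G$. For each $j$ let $e_j$ be the branching index and $F_j:=\mathrm{pr}_2^*(q_j)$; being the pullback of a point, $F_j$ is a Cartier divisor. The first thing to check is that every component of $F_j$ occurs with one and the same multiplicity $e_j$ — namely the order of the stabiliser of the generic point of the corresponding curve $C_1\times\{x_2\}$ — so that $F_j=e_j\,\Gamma_j$ with $\Gamma_j$ the reduced fibre. Since all fibres are linearly equivalent and distinct fibres are disjoint, $F_j^2=0$, whence $\Gamma_j^2=\tfrac{1}{e_j^2}F_j^2=0$.

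\medskip\noindent
\textbf{Local input and main computation.} Every singular point $s$ of $X$ lies over exactly one $q_j$, and near $s$ the fibre $\Gamma_j$ is a single smooth branch, namely the coordinate axis $\{z_2=0\}$ in the local model $\CC^2/\langle g\rangle$ of the singularity of type $\tfrac{1}{n_s}(1,a_s)$. The classical Hirzebruch--Jung computation of the intersection form of a cyclic quotient singularity gives that the Mumford self-intersection of such an axis equals $-a_s/n_s$ (with $a_s$ read off in the orientation that fixes the first coordinate; the opposite orientation produces $a_s^{-1}$, which is exactly the ambiguity recorded in Remark \ref{ambiguity}). Writing $\pi\colon S\to X$ for the minimal resolution, each reduced component $\Gamma_{jk}$ of $\Gamma_j$ satisfies $\Gamma_{jk}^2=\widetilde\Gamma_{jk}^2+\sum_{s\in\Gamma_{jk}}(-a_s/n_s)$ with $\widetilde\Gamma_{jk}^2\in\ZZ$. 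As only one branch of $\Gamma_j$ passes through any singular point, distinct components of $\Gamma_j$ meet solely in the smooth locus of $X$, so their mutual intersection numbers are integers. Expanding $\Gamma_j^2$ therefore yields
$$
0=\Gamma_j^2\equiv-\sum_{s\,:\,\mathrm{pr}_2(s)=q_j}\frac{a_s}{n_s}\pmod{\ZZ}.
$$
Summing over $j$ and using that each singular point lies over a unique $q_j$, I obtain $\sum_s a_s/n_s\in\ZZ$; grouping the singular points by type gives $\sum_i\lambda_i a_i/n_i\in\ZZ$ for the representation of the basket in which $a_i$ is the second-axis self-intersection datum. This exhibits the required representation.

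\medskip\noindent
\textbf{Expected main obstacle.} The crux is the local intersection-theoretic input, i.e.\ the value $-a/n$ for the self-intersection of a toric boundary divisor of $\tfrac{1}{n}(1,a)$, together with identifying which representation ($a$ or its inverse $a^{-1}$) the geometry selects — the ambiguity of Remark \ref{ambiguity} is precisely what lets this be absorbed. The fibration bookkeeping (constancy of the multiplicities $e_j$ along the components of $F_j$, and the fact that each singular point sits on a single branch, so that cross-terms are integral) is routine but must be carried out with care; once it is in place, the vanishing $F_j^2=0$ does all the work.
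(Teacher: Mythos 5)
Your proof is correct and is essentially the paper's own argument: the paper considers the fibration $X \to C_1/G$ (you use the other projection, which is immaterial), takes the singular fibres $F_1,\dots,F_r$ with reduced structure and their strict transforms $\tilde{F}_i$ on $S$, and then cites \cite[Proposition 2.8]{PolizziNumProp} for precisely the identity you derive by hand, namely that for a suitable representation of the basket $\sum \lambda_i \frac{a_i}{n_i} = -\sum \tilde{F}_i^2 \in \ZZ$; your proposal simply unpacks that citation via the Hirzebruch--Jung/Mumford computation, using $F_j^2=0$ and the integrality of self-intersections on the smooth surface $S$. One small slip: the Mumford correction at a point of type $\frac{1}{n}(1,a)$ enters with a plus sign, i.e.\ $\Gamma_j^2 = \tilde{\Gamma}_j^2 + \sum_{s} \frac{a_s}{n_s}$ (so that $\tilde{\Gamma}_j^2 = -\sum_s \frac{a_s}{n_s} \leq 0$ when $\Gamma_j^2 = 0$), not $-\frac{a_s}{n_s}$ as you wrote --- this is harmless modulo $\ZZ$, and with the correct sign your computation reproduces Polizzi's formula verbatim.
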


\begin{proof}
Consider the fibration $X \rightarrow C_1/G$, and let $F_1,\ldots, F_r$ be the singular fibres
taken with the reduced structure.
Let $\tilde{F}_i$ be the strict transform of $F_i$ on $S$.

Then, by \cite[Proposition 2.8]{PolizziNumProp}, for a suitable representation of the basket
$$
\sum \lambda_i \frac{a_i}{n_i}=-\sum \tilde{F}_i^2\in \ZZ.
$$
\end{proof}

\begin{defin}\label{possible}
A multiset
$$
\mathcal{B}:=\left\{\lambda_1 \times \frac{1}{n_1} (1,a_1), \ldots, \lambda_R \times \frac{1}{n_R} (1,a_R)\right\}
$$
is called a {\em possible basket of singularities for $(K^2, \chi)$} if and only if it satisfies the following conditions:
\begin{itemize}
\item there is a representation of ${\mathcal B}$, say $$
\mathcal{B}:=\left\{\lambda'_1 \times \frac{1}{n'_1} (1,a'_1), \ldots, \lambda'_{R'} \times \frac{1}{n'_{R'}} (1,a'_{R'}) \right\}
$$
such that $\sum \lambda'_i \frac{a'_i}{n'_i} \in \ZZ$,
\item $B(\mathcal{B})=3(8 \chi(S)-K^2)$.
\end{itemize}

\end{defin}

It is now obvious that the basket of the quotient model $X$ of a product-quotient surface $S$ is a possible basket of singularities for $(K_S^2, \chi(\hol_S))$.

\subsection{Finiteness of the classification problem}

The next lemma shows that, for every pair $(K^2, \chi) \in \ZZ \times \ZZ$, there are only finitely many  possible baskets of singularities for $(K^2, \chi)$.

\begin{lemma}\label{poss}
 Let $C \in \QQ$ be fixed. Then there are finitely many baskets $\mathcal{B}$ such that
$$
B({\mathcal B}) = C.
$$

More precisely, we have:
\begin{itemize}
 \item[i)] $|\mathcal{B}| \leq \frac{C}{3}$,
 \item[ii)] if $\lambda \times \frac{1}{n}(1,a) \in \mathcal{B}$ and $\frac{n}{a}=[b_1,\dots,b_l]$, then
$\lambda \sum b_i \leq C$.
\end{itemize}

\end{lemma}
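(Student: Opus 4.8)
The plan is to reduce everything to a single clean formula for the number $B_x$ attached to a cyclic quotient singularity, and then to argue by elementary counting. The computational heart of the argument is the following simplification. Writing $\frac{n}{a}=[b_1,\dots,b_l]$ with all $b_i\geq 2$, I would substitute the definitions of $e_x$ and $k_x$ directly into $B_x=2e_x+k_x$. The summand $2l$ coming from $2e_x$ cancels against the $-2l$ hidden in $\sum(b_i-2)$, the constants cancel, and the terms $\pm\frac{2}{n}$ combine, leaving
\[
B_x=\sum_{i=1}^{l}b_i+\frac{a+a'}{n}.
\]
From here every assertion of the lemma is read off as an inequality.

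Next I would establish the lower bound $B_x\geq 3$. Since $1\leq a,a'\leq n-1$, the fractional term is strictly positive, whence $B_x>\sum_i b_i$. To pin down the minimum I split on $l$: if $l\geq 2$ then $\sum_i b_i\geq 4>3$; if $l=1$ then $\frac{n}{a}=b_1$ forces $a=a'=1$ and $n=b_1\geq 2$, so $B_x=n+\frac{2}{n}\geq 3$, with equality exactly for $\frac12(1,1)$. Hence $B_x\geq 3$ for every cyclic quotient singularity.

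The two refined bounds and finiteness are then immediate. Summing over the basket with multiplicities gives $C=B(\mathcal{B})=\sum_x B_x\geq 3\,|\mathcal{B}|$, which is part (i). For a fixed type $\frac1n(1,a)$ occurring with multiplicity $\lambda$, its contribution $\lambda B_x$ is at most $C$ because every $B_y\geq 0$, and combining this with $B_x>\sum_i b_i$ yields $\lambda\sum_i b_i<\lambda B_x\leq C$, which is part (ii). For the finiteness statement, note that by (ii) each type appearing in $\mathcal{B}$ satisfies $\sum_i b_i\leq C$ (as $\lambda\geq 1$), so $l\leq C/2$ and each $b_i\leq C$; there are therefore only finitely many admissible Hirzebruch--Jung strings $[b_1,\dots,b_l]$, and each determines a unique reduced fraction $\frac{n}{a}$ and hence a single singularity type. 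Since the multiplicities are bounded as well (by (i)), only finitely many baskets satisfy $B(\mathcal{B})=C$.

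I expect the only delicate point to be Step~1: once the identity $B_x=\sum_i b_i+\frac{a+a'}{n}$ is in hand, the monotonicity $B_x>\sum_i b_i$ and the bound $B_x\geq 3$ are transparent and the remaining counting is routine. The care required is thus purely in the bookkeeping of the cancellation and in checking that no singularity escapes the bound $B_x\geq 3$.
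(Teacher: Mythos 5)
Your proposal is correct and follows essentially the same route as the paper: the key point in both is the identity $B_x=\sum_i b_i+\frac{a+a'}{n}\geq 3$, after which (i) follows by summing over the basket and (ii) is immediate. The only difference is that you spell out the cancellation, the case analysis behind the bound $B_x\geq 3$, and the finiteness count, which the paper leaves implicit.
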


\begin{proof}
 Observe first that  $B(\frac{1}{n}(1,a))= \frac{a+a'}{n} + \sum b_i\geq 3$. In particular,
$$
C =B(\mathcal{B}) \geq 3|\mathcal{B}|,
$$
which shows $(i)$. $(ii)$ is obvious.

\end{proof}

\begin{rem}
Note that, by \cite{serrano}, if $S\rightarrow X=C_1 \times C_2/G$ is a product-quotient surface,
then $q(S)=g(C_1/G)+g(C_2/G)$. Therefore 
$q(S)=0 \Leftrightarrow g(C_1/G)=g(C_2/G)=0$.
This implies that a product-quotient surface $S$ of general type with quotient model $X = (C_1 \times C_2)/G$  has $p_g(S)=0$ if and only if 
\begin{itemize}
 \item $\chi(\hol_S) = 1$ and 
\item $C_1/G \cong C_2/G \cong \PP^1$.
\end{itemize}

\end{rem}
From now on we shall restrict ourselves to product-quotient surfaces $S$ of general type with $p_g(S) =0$. Let $\lambda_i \colon C_i \rightarrow
\PP^1$, $i = 1, 2$ be the two Galois covers associated to it.

Recall that, by Riemann's existence theorem (cf. the introduction for more details), an action  of a finite group $G$ on a
compact Riemann
surface $C$ of genus $g$ such that $C/G \cong \PP^1$ is given by  an appropriate  orbifold homomorphism
$$\varphi \colon
\BT(m_1,\ldots, m_r)
\rightarrow G$$  such that the Riemann-Hurwitz relation holds:
$$ 2g - 2 = |G|\left(-2 + \sum_{i=1}^r \left(1 -
\frac{1}{m_i}\right)\right).
$$
Then $\lambda_i \colon C_i \rightarrow
\PP^1$, $i = 1, 2$ induce two appropriate orbifold homomorphisms
$$
\varphi_1 \colon \BT(m_1,\ldots, m_r) \rightarrow G,
$$
$$
\varphi_2 \colon \BT(n_1,\ldots, n_s) \rightarrow G.
$$
Here $\lambda_1$ is branched in $r$ points $p_1, \ldots , p_r \in \PP^1$ with branching
indices $m_1, \ldots ,
m_r$, and  $\lambda_2$ is branched in $s$ points $p_1', \ldots , p_s' \in \PP^1$ with branching
indices $n_1, \ldots ,
n_s$.

We need the following

\begin{defin}
Fix an $r$-tuple of natural numbers $t:=(m_1, \ldots , m_r)$ and a basket of singularities $\mathcal{B}$. Then we associate to these the following numbers:
$$
\Theta(t) := -2 + \sum_{i=1}^r (1 - \frac {1}{m_i});
$$
$$
\alpha(t, \mathcal{B}) :=
\frac{12 + k(\mathcal{B})- e(\mathcal{B})}{6 \Theta(t)}.
$$
\end{defin}

Moreover, we recall the following
\begin{defin}
The minimal positive integer $I_x$ such that $I_xK_X$ is Cartier in
$x$ is called the {\em index of the singularity $x$}.

The {\em index of $X$} is the minimal positive integer $I$ such that $IK_X$ is Cartier. In particular,
$I=\lcm_{x \in \Sing X} I_x$.
\end{defin}

It is well known (cf. e.g. \cite{matsuki}, theorem 4-6-20) that the index of a cyclic quotient singularity $\frac{1}{n}(1,a)$  is
$$I_x=\frac{n}{\gcd(n,a+1)}.
$$

\medskip\noindent
 By lemma \ref{poss}, fixed $K^2 \in \ZZ$, there are finitely many possible baskets of singularities  for $(K^2, \chi(\hol_S) = 1)$. 

\medskip\noindent
We shall bound now, for fixed $K^2$ and $\mathcal{B}$, the possibilities for:
\begin{itemize}
 \item $|G|$,
\item $t_1:=(m_1, \ldots, m_r)$,
\item $t_2:=(n_1, \ldots, n_s)$,
\end{itemize}
of a product-quotient surface $S$ with $K_S^2 = K^2$ and basket of singularities of the quotient model $X$ equal to $\mathcal{B}$.

\begin{prop}\label{finite}
 Fix $K^2 \in \ZZ$, and fix a possible basket of singularities $\mathcal{B}$ for $(K^2,1)$. Let $S$ be a product-quotient surface $S$ of general type such that
\begin{itemize}
 \item[i)] $p_g(S) =0$,
\item[ii)] $K_S^2 = K^2$,
\item[iii)] the basket of singularities of the quotient model $X=(C_1\times C_2)/G$ of $S$ equals $\mathcal{B}$.
\end{itemize}
Then:
\begin{itemize}
\item[a)] $g(C_1)=\alpha(t_{2},{\mathcal B})+1$, $g(C_2)=\alpha(t_{1},{\mathcal B})+1$;
\item[b)] $|G| = \frac{8 \alpha(t_1, \mathcal{B})\alpha(t_2, \mathcal{B})}{K^2+k({\mathcal B})}$;
 \item[c)] $r,s \leq \frac{K^2+k(\mathcal{B})}{2} +4$;
\item[d)] $m_i$ divides $2\alpha(t_1,{\mathcal B})I$, $n_j$ divides $2\alpha(t_2,{\mathcal B})I$;
\item[e)] there are at most $|{\mathcal B}|/2$ indices $i$ such that $m_i$ does not divide
$\alpha(t_1,{\mathcal B})$, and similarly for the $n_j$;
\item[f)] $m_i \leq \frac{1+I \frac{K^2+k(\mathcal{B})}{2}}{f(t_1)}$,
$n_i \leq \frac{1+I \frac{K^2+k(\mathcal{B})}{2}}{f(t_2)}$,
where $I$ is the index of $X$,
and $f(t_1):= \max (\frac{1}{6}, \frac{r-3}{2})$, $f(t_2):= \max (\frac{1}{6}, \frac{s-3}{2})$;
\item[g)] except for at most $|{\mathcal B}|/2$ indices $i$, the sharper inequality
$m_i \leq \frac{1+ \frac{K^2+k(\mathcal{B})}{4}}{f(t_1)}$ holds, and similarly for the $n_j$.
\end{itemize}
\end{prop}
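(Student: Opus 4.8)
The plan is to turn everything into the two numerical functions $\Theta$ and $\alpha$ by combining the Hurwitz relation with the canonical-class formula of Proposition \ref{k2e}, and then to read off the divisibility and boundedness statements from the two fibrations $X\to C_i/G\cong\PP^1$. First I would record two basic identities. The Hurwitz relation gives $g(C_i)-1=\tfrac{|G|}{2}\Theta(t_i)$, so substituting into the formula $K_S^2=\tfrac{8(g(C_1)-1)(g(C_2)-1)}{|G|}-k(\mathcal{B})$ of Proposition \ref{k2e} yields
\[
K^2+k(\mathcal{B})=2|G|\,\Theta(t_1)\Theta(t_2).
\]
On the other hand, since $p_g(S)=0$ forces $\chi(\hol_S)=1$, Corollary \ref{k2basket} gives $B(\mathcal{B})=3(8-K^2)$, which is exactly the identity $\tfrac{K^2+k(\mathcal{B})}{4}=\tfrac{12+k(\mathcal{B})-e(\mathcal{B})}{6}$. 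Feeding the first identity into $g(C_1)-1=\tfrac{|G|}{2}\Theta(t_1)=\tfrac{K^2+k(\mathcal{B})}{4\Theta(t_2)}$ and applying the second identity rewrites the right-hand side as $\alpha(t_2,\mathcal{B})$; this is (a), and (b) is then the rearrangement of $K^2+k(\mathcal{B})=\tfrac{8(g(C_1)-1)(g(C_2)-1)}{|G|}$ using (a). For (c) I would bound $\Theta(t_1)$ from both sides: since $g(C_2)\geq 2$ one has $|G|\Theta(t_2)=2(g(C_2)-1)\geq 2$, whence $\Theta(t_1)\leq\tfrac{K^2+k(\mathcal{B})}{4}$; and since every $m_j\geq 2$, $\Theta(t_1)\geq \tfrac r2-2$. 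Comparing gives $r\leq\tfrac{K^2+k(\mathcal{B})}{2}+4$, and symmetrically for $s$.

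The geometric heart is (d)--(e), which I would extract from the fibration $f\colon X\to C_1/G\cong\PP^1$, whose general fibre $F$ is isomorphic to $C_2$ and which has an irreducible multiple fibre $m_iF_i$ over each branch point $p_i$, where $F_i\cong C_2/\langle h_i\rangle$ and $h_i$ generates the order-$m_i$ stabiliser. For (d): $IK_X$ is Cartier, so $IK_X\cdot F_i\in\ZZ$; since $F\sim m_iF_i$ and $K_X\cdot F=2g(C_2)-2=2\alpha(t_1,\mathcal{B})$, this forces $m_i\mid 2\alpha(t_1,\mathcal{B})I$. For (e) I note $F_i^2=\tfrac1{m_i}F\cdot F_i=0$, so if $F_i$ avoids $\Sing X$ then $F_i$ is a smooth curve in the smooth locus and adjunction reads $2g(F_i)-2=K_X\cdot F_i=\tfrac{2\alpha(t_1,\mathcal{B})}{m_i}$, forcing $m_i\mid\alpha(t_1,\mathcal{B})$. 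The remaining (bad) indices are those for which $F_i$ meets $\Sing X$; for such an $i$ the cyclic cover $C_2\to F_i$ is genuinely branched, and a connected cyclic cover can never have exactly one branch point, so $F_i$ carries at least two singular points of $X$. As every singular point of $X$ lies over a branch point and hence on a unique fibre $F_i$ of $f$, the singular points are partitioned among these fibres, so there are at most $|\mathcal{B}|/2$ bad indices, which is (e).

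Finally (f) and (g) are elementary consequences of (d)--(e) together with the pointwise estimate $\Theta(t_1)\geq f(t_1)-\tfrac1{m_i}$. This last estimate splits into two cases: for $r\geq 4$ it follows from $\sum_{j\neq i}(1-\tfrac1{m_j})\geq\tfrac{r-1}{2}$, giving $\Theta(t_1)\geq\tfrac{r-3}{2}-\tfrac1{m_i}$; for $r=3$ it follows from the fact that a hyperbolic triangle group has no two indices equal to $2$, so the two indices other than $m_i$ satisfy $\tfrac1{m_j}+\tfrac1{m_k}\leq\tfrac56$ and hence $\Theta(t_1)\geq\tfrac16-\tfrac1{m_i}$. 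Now from (d) one has $m_i\leq 2\alpha(t_1,\mathcal{B})I=\tfrac{I(K^2+k(\mathcal{B}))}{2\Theta(t_1)}$, i.e. $\Theta(t_1)\leq\tfrac{I(K^2+k(\mathcal{B}))}{2m_i}$; combining with the lower bound and clearing $m_i$ gives $m_i\leq\tfrac{1+I(K^2+k(\mathcal{B}))/2}{f(t_1)}$, which is (f). Replacing the input $m_i\leq 2\alpha(t_1,\mathcal{B})I$ by the sharper $m_i\leq\alpha(t_1,\mathcal{B})=\tfrac{K^2+k(\mathcal{B})}{4\Theta(t_1)}$, valid for good indices by (e), yields (g) by the identical manipulation.

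I expect the main obstacle to be (e): one must correctly identify the multiple-fibre structure of $f$ and, crucially, argue that a fibre $F_i$ meeting the singular locus carries \emph{at least two} singular points of $X$. This rests on the ``no single branch point'' property of connected cyclic covers and on the bookkeeping that each singularity of $X$ lies on exactly one fibre of $f$; everything else is either the bare Hurwitz/Proposition \ref{k2e} arithmetic or the two-case elementary estimate for $\Theta(t_1)$.
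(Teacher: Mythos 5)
Your proof is correct and follows essentially the same route as the paper's: the identity $\Theta(t_i)\,\alpha(t_i,\mathcal{B})=\frac{K^2+k(\mathcal{B})}{4}$ combined with Hurwitz and Proposition \ref{k2e} for (a)--(c), the multiple-fibre/Cartier-index argument for (d), adjunction on fibres avoiding $\Sing X$ plus the two-singular-points-per-bad-fibre count for (e), and the identical $\Theta$-estimates (including the $\Theta(2,2,m)<0$ case for $r=3$) for (f)--(g). The only deviation is that where the paper cites Serrano for the fact that a fibre meeting $\Sing X$ contains at least two singular points, you reprove it inline via the observation that a connected abelian cover of curves cannot be branched in exactly one point, which is precisely Serrano's argument.
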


\begin{rem}
 Note that prop. \ref{finite}, b) shows that $t_1$, $t_2$ determine the order of $G$. c), f) imply that there are only finitely many possibilities for the types $t_1$, $t_2$.
Parts d), e) and g) are strictly necessary to obtain an efficient algorithm.
\end{rem}

\begin{proof}
a) Observe that by corollary \ref{k2basket}, since $\chi(\hol_S)=1$, we have
$$\Theta(t_1) \alpha(t_1,\mathcal{B}) = \frac{12 +k(\mathcal{B})-e(\mathcal{B})}{6}=
\frac{24 -B(\mathcal{B})+3k(\mathcal{B})}{12}=
\frac{K^2+k(\mathcal{B})}{4}
$$
and then by prop. \ref{k2e} and Hurwitz' formula
$$
\alpha(t_1, \mathcal{B})=
\frac{K^2 +k(\mathcal{B})}{4 \Theta(t_1)}
= \frac{8(g(C_1)-1)(g(C_2)-1)}{4|G| (-2 + \sum_{i=1}^r (1 - \frac {1}{m_i}))}
= \frac{8(g(C_1)-1)(g(C_2)-1)}{4(2g(C_1)-2)}.
$$

b) $$|G|= \frac{8 (g(C_1)-1)(g(C_2)-1)}{K^2+k({\mathcal B})}
= \frac{8 \alpha(t_2, \mathcal{B})\alpha(t_1, \mathcal{B})}{K^2+k({\mathcal B})}.$$

c) Note that $r \leq 2\sum_{i=1}^r (1 - \frac{1}{m_i})= 2 \Theta(t_1) +4$.
On the other hand, since $g(C_j) \geq 2$, we have $1 \leq \alpha(t_i, \mathcal{B}) =
\frac{K^2+k(\mathcal{B})}{4 \Theta(t_i)}$.
This implies that $(0 <) \ \Theta(t_i) \leq \frac{K^2+k(\mathcal{B})}{4}$.

d) Each $m_i$ is the branching index of a branch point $p_i$ of  $\lambda_1 \colon C_1 \rightarrow C_1/G\cong \PP^1$.
Let $F_i$ be the fibre over $p_i$ of the map $X \rightarrow C_1/G$. Then $F_i=m_iW_i$ for some irreducible Weil divisor
$W_i$.
$$
2\alpha(t_1,{\mathcal B})=2g(C_2)-2=K_XF_i=m_iK_XW_i.
$$
Therefore
$$
\frac{2\alpha(t_1,{\mathcal B})I}{m_i}=(IK_X)W_i \in \ZZ.
$$
e) By \cite{serrano}, if $F_i$ contains a singular point of $X$, then it contains at least $2$ singular points.
Therefore there are at most $|{\mathcal B}|/2$ indices $i$ ($1 \leq i \leq r$) such that $F_i \cap \Sing X \neq \emptyset$.

For all other indices $j$ we have $F_j \cap \Sing X=\emptyset$. Then $W_j$ is Cartier and $K_X$ is Cartier in a neighbourhood of $W_j$.
In particular, $\frac{\alpha(t_1,{\mathcal B})I}{m_j}=\frac{K_XW_j}2 \in \ZZ$.

f) Note that $\Theta(t_1)+\frac{1}{m_i} \geq \frac{r-3}2$.
Moreover, $\Theta(t_1)>0$ implies that $r\geq 3$. 
Obviously, if $r=3$, since $\Theta(2,2,m)=-\frac1m<0$, then $\Theta(t_1)+\frac{1}{m_i}\geq \frac16$.
Therefore $\Theta(t_1)+\frac{1}{m_i}\geq f(t_1)$, whence $m_i\leq \frac{1+\Theta(t_1)m_i}{f(t_1)}$.

By d) $m_i\leq 2\alpha(t_1,{\mathcal B})I=\frac{K^2+k({\mathcal B})}{2\Theta(t_1)}I$.
This implies
$$m_i \leq \frac{1+\Theta(t_1)m_i}{f(t_1)}\leq
\frac{1+\Theta(t_1)\frac{K^2+k({\mathcal B})}{2\Theta(t_1)}I}{f(t_1)} =
\frac{1+\frac{K^2+k({\mathcal B})}{2}I}{f(t_1)}.$$

g) This is proved by the same argument as in f), using e) instead of d).
\end{proof}

\subsection{How to read the basket $\mathcal{B}$ from the group theoretical data}
Our next goal is to describe explicitly how the two appropriate orbifold homorphisms
$$
\varphi_1 \colon \BT(m_1,\ldots, m_r) \rightarrow G,
$$
$$
\varphi_2 \colon \BT(n_1,\ldots, n_s) \rightarrow G.
$$
determine the singularities of the quotient model $X$. 

We denote the images of the standard generators (the $c_i$ in definition \ref{polgr}) of $\BT(m_1,\ldots, m_r)$ (resp. of 
$\BT(n_1,\ldots, n_s)$) by $(g_1, \ldots, g_r)$ (resp. by $(h_1, \ldots, h_s)$).

Moreover we set $H_i:=\langle g_i \rangle$ and $H'_j:=\langle h_j \rangle$.

We have now the following commutative diagram:
\begin{equation}\label{diagram1}
\xymatrix{
&C_1 \times C_2 \ar^{p_2}[dr]\ar_{p_1}[dl]\ar_{\lambda_{12}}[dd]&\\
C_1\ar_{\lambda_1}[dd]&&C_2\ar^{\lambda_2}[dd]\\
&X = (C_1 \times C_2)/G\ar_{f_1}[ld]\ar^{f_2}[dr]\ar^{\lambda}[dd]&\\
C_1/G \cong \PP^1&&C_2/G \cong \PP^1\\
&C_1/G \times C_2/G \cong \PP^1 \times \PP^1 \ar_{}[ul]\ar_{}[ur]&\\
}
\end{equation}

Note that the singular points of $X$ are the points $Q = \lambda_{12}(q,q')$ such that the stabilizer
$$
\Stab(q,q'):= \Stab(q) \cap \Stab(q') \neq \{1\}.
$$

In particular, if $Q \in \Sing(X)$ then $\lambda(Q) = (p_i, p'_j)$, where $p_i$ (resp. $p'_j$) is a critical value of $\lambda_1$ (resp. $\lambda_2$).

We first prove the following

\begin{prop}\label{bijective}
Let $i \in \{1, \ldots , r\}$, $j \in \{1, \ldots s \}$. Then
\begin{enumerate}
\item there is a $G-$equivariant bijective map $(\lambda \circ \lambda_{12})^{-1} (p_i, p'_j) \ra G/H_i \times G/H'_j$, where the $G-$action on the target is given by left multiplication (simultaneously on both factors);
\item intersecting with $\{\bar{1}\} \times G/H'_j$ gives a bijection between the orbits of the above $G-$action on $G/H_i \times G/H'_j$ with the orbits of the $H_i$-action on $G/H'_j$, {\it i.e.} with $(G/H'_j)/H_i$

\end{enumerate}

\end{prop}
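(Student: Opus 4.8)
The plan is to treat part (1) as essentially a statement about the fibres of the two Galois covers, and part (2) as a purely group-theoretic orbit count. For (1) I would first exploit the commutativity of diagram \eqref{diagram1}: since $f_1 \circ \lambda_{12} = \lambda_1 \circ p_1$ and $f_2 \circ \lambda_{12} = \lambda_2 \circ p_2$, and $\lambda = (f_1, f_2)$ is the induced map to the product, the composition $\lambda \circ \lambda_{12} \colon C_1 \times C_2 \to \PP^1 \times \PP^1$ is simply $(q, q') \mapsto (\lambda_1(q), \lambda_2(q'))$. Consequently
\[
(\lambda \circ \lambda_{12})^{-1}(p_i, p'_j) = \lambda_1^{-1}(p_i) \times \lambda_2^{-1}(p'_j),
\]
and the diagonal $G$-action on the left corresponds to the product of the $G$-actions on the two factors. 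This reduces (1) to identifying each fibre over a branch point as a $G$-set.

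The geometric heart of the argument is the description of $\lambda_1^{-1}(p_i)$: by Riemann's existence theorem, with the chosen loops $\gamma_i$ and the orbifold homomorphism $\varphi_1$, the fibre over the branch point $p_i$ is a single $G$-orbit, and one may choose a point $q_i$ in it whose stabiliser is exactly the cyclic group $H_i = \langle g_i \rangle$ generated by the local monodromy $g_i = \varphi_1(c_i)$, which has order $m_i$. The orbit-stabiliser theorem then yields a $G$-equivariant bijection $G/H_i \to \lambda_1^{-1}(p_i)$, $a H_i \mapsto a \cdot q_i$, and likewise $G/H'_j \to \lambda_2^{-1}(p'_j)$. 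Taking the product of these two identifications gives the desired $G$-equivariant bijection $(\lambda \circ \lambda_{12})^{-1}(p_i, p'_j) \cong G/H_i \times G/H'_j$, which is (1). I expect this identification of the stabiliser with $H_i$ to be the one \emph{delicate} point: one must keep the monodromy conventions consistent so that the stabiliser of the chosen point is precisely $\langle g_i \rangle$ and not merely some conjugate of it; once this is pinned down, the rest is formal.

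For (2) I would argue purely within the $G$-set $G/H_i \times G/H'_j$. First, every $G$-orbit meets the slice $\{\bar 1\} \times G/H'_j$: given any $(aH_i, bH'_j)$, acting by $a^{-1}$ sends it to $(H_i, a^{-1} b H'_j)$, whose first coordinate is $\bar 1$. Second, two points $(\bar 1, b_1 H'_j)$ and $(\bar 1, b_2 H'_j)$ of this slice lie in the same $G$-orbit if and only if there is $g \in G$ fixing $\bar 1$, equivalently $g \in \Stab(\bar 1) = H_i$, with $g b_1 H'_j = b_2 H'_j$; that is, if and only if $b_1 H'_j$ and $b_2 H'_j$ lie in the same $H_i$-orbit on $G/H'_j$. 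Hence sending a $G$-orbit to the second coordinate of its intersection with $\{\bar 1\} \times G/H'_j$ is a well-defined bijection from the set of $G$-orbits onto $(G/H'_j)/H_i$, which is exactly statement (2). This last part uses no geometry at all; it is the standard identification of diagonal $G$-orbits on a product of coset spaces with the double-coset space $H_i \backslash G / H'_j$.
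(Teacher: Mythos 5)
Your proof is correct and follows essentially the same route as the paper's: part (1) identifies each branch fibre with a coset space via orbit--stabilizer (the paper writes the inverse bijection $q_j \mapsto \{g \in G \mid gq_1 = q_j\}$, you write $aH_i \mapsto a\cdot q_i$), and part (2) is the identical slice argument — every $G$-orbit meets $\{\bar 1\} \times G/H'_j$, and two slice points are $G$-equivalent iff they are $H_i$-equivalent. The "delicate point" you flag is in fact harmless: even if the chosen point's stabiliser were only a conjugate $gH_ig^{-1}$, the coset spaces $G/(gH_ig^{-1})$ and $G/H_i$ are $G$-equivariantly isomorphic, so the statement would follow regardless.
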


\begin{proof}
1) Wlog we can assume $(i,j) = (1,1)$. We fix the following notation
$$
\pi_1^{-1}(p_1) = \{q_1, \ldots, q_k\}, \ \ \pi_2^{-1}(p'_1) = \{q'_1, \ldots, q'_l\}.
$$

There is a $G-$equivariant bijection between $\{q_1, \ldots, q_k\}$ and the set of left cosets
$$
\{ a_1H_1, \ldots , a_kH_1\},
$$
mapping each $q_j$ in $\{g\in G|gq_1=q_j\}$;
 similarly there is a bijection between $\{q'_1, \ldots, q'_l\}$ and
$$
\{ a'_1 H'_1 , \ldots , a'_l H'_1 \}.
$$

This gives a $G-$equivariant bijection between $(\lambda \circ \lambda_{12})^{-1} (p_1, p'_1)$ and $G/H_1 \times G/H'_1$. 

2) We consider the (diagonal) $G$-action on $G/H_1 \times G/H'_1$ by left multiplication. Note that the $G$-orbits are in one-to-one correspondence with the points of $\lambda((\lambda \circ \lambda_{12})^{-1} (p_1, p'_1))$.

Observe that
\begin{itemize}
 \item[i)] $(hH_1 , h'H'_1)$ is in the same $G$-orbit as $(H_1, h^{-1}h'H'_1)$;
\item[ii)] $(H_1, gH'_1)$ is in the same $G$-orbit as $(H_1,g'H'_1)$ if and only if $gH'_1$ and $g'H'_1$
are in the same orbit for the action of $H_1$.
\end{itemize}

\end{proof}

\begin{rem}
 Recall that $\Sing(X) \subset  \lambda^{-1}(\{(p_i,p'_j)\})$. Observe moreover that proposition \ref{bijective} gives for each $(i,j)$ a bijection between $\lambda^{-1} (p_i, p'_j)$ and $(G/H'_j)/H_i$.
\end{rem}

We still have to determine the types of the  singularities. This is done in the following 
\begin{prop}
An element $[g]\in (G/H'_j)/H_i$ corresponds to a point $\frac 1n (1,a)$,
where $n = |H_i \cap gH'_jg^{-1}|$, and $a$ is given as follows: let $\delta_i$
be the minimal positive number such that there exists $1 \leq \gamma_j \leq o(h_j)$ with
$g_i^{\delta_i}=gh_j^{\gamma_j}g^{-1}$. Then $a = \frac{n\gamma_j}{o(h_j)}$.
\end{prop}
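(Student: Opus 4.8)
The plan is to work locally analytically near a singular point of $X$ and identify the cyclic quotient structure directly from the stabilizer data. By Proposition \ref{bijective} and the subsequent remark, a class $[g] \in (G/H'_j)/H_i$ corresponds to a point $Q$ lying over $(p_i, p'_j)$. First I would choose a preimage $(q,q') \in C_1 \times C_2$ with $\lambda_{12}(q,q') = Q$, arranged so that $q$ lies over $p_i$ and $q'$ lies over $p'_j$; using the $G$-equivariant identifications of Proposition \ref{bijective}, I may take $q$ to correspond to the trivial coset $H_i$ (so $\Stab(q) = H_i = \langle g_i \rangle$) and $q'$ to correspond to the coset $gH'_j$ (so $\Stab(q') = gH'_jg^{-1} = \langle gh_jg^{-1}\rangle$). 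The stabilizer of $(q,q')$ for the diagonal action is then $\Stab(q) \cap \Stab(q') = H_i \cap gH'_jg^{-1}$, whose order is exactly the claimed $n = |H_i \cap gH'_jg^{-1}|$; this cyclic group of order $n$ is what produces the singularity $\frac{1}{n}(1,a)$ as the quotient of a neighborhood of $(q,q')$ by its stabilizer.

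\textbf{Reading off the type.}
The core of the argument is to compute how the generator of the cyclic stabilizer $\Stab(q,q')$ acts on the tangent space $T_q C_1 \oplus T_{q'} C_2$. Since $G$ acts faithfully and the stabilizers of points on a Riemann surface act on the tangent direction by a primitive root of unity compatible with the branching index, I would fix local coordinates in which $g_i$ acts on $T_q C_1$ by $\exp(2\pi i / o(g_i))$ and $h_j$ acts on $T_{q'} C_2$ by $\exp(2\pi i / o(h_j))$; conjugating, $gh_jg^{-1}$ acts on $T_{q'}C_2$ by the same primitive root $\exp(2\pi i / o(h_j))$. Now let $\sigma$ be a generator of $H_i \cap gH'_jg^{-1}$. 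On the one hand $\sigma = g_i^{\delta_i}$ for the minimal positive $\delta_i$ making $g_i^{\delta_i}$ lie in the intersection, so $\sigma$ acts on $T_qC_1$ by $\exp(2\pi i \delta_i / o(g_i))$; since $\sigma$ has order $n$, this forces $\delta_i / o(g_i) = 1/n$ up to a unit, and I would normalize the coordinate so that this eigenvalue is precisely $\exp(2\pi i/n)$, matching the $\frac{1}{n}(1,\,\cdot\,)$ normalization. On the other hand $\sigma = gh_j^{\gamma_j}g^{-1}$ for the corresponding $\gamma_j$, so $\sigma$ acts on $T_{q'}C_2$ by $\exp(2\pi i \gamma_j / o(h_j))$. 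Writing this second eigenvalue as $\exp(2\pi i \, a/n)$ and comparing exponents gives $a/n = \gamma_j / o(h_j)$, i.e. $a = \frac{n\gamma_j}{o(h_j)}$, which is exactly the asserted value.

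\textbf{The main obstacle and finishing.}
The delicate point is the bookkeeping of the two normalizations simultaneously: the singularity type $\frac1n(1,a)$ requires the first eigenvalue to be the \emph{primitive} $n$-th root $\exp(2\pi i/n)$ (coefficient $1$), and only then is $a$ well-defined. I expect the main work to be verifying that $\delta_i$, the minimal exponent with $g_i^{\delta_i} \in H_i \cap gH'_jg^{-1}$, indeed makes $g_i^{\delta_i}$ a generator of the intersection and acts with eigenvalue $\exp(2\pi i/n)$ on $T_qC_1$, so that the resulting $a$ is coprime to $n$ as required; this is essentially the statement that $\delta_i$ is minimal and that $g_i^{\delta_i}$ and $gh_j^{\gamma_j}g^{-1}$ denote the \emph{same} element $\sigma$. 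Once the single generator $\sigma$ is pinned down and expressed in both forms, the two eigenvalue computations combine to give the pair $(1,a)$ with $a = \frac{n\gamma_j}{o(h_j)}$, and the integrality and coprimality of $a$ follow automatically from $\sigma$ having order exactly $n$. I would close by remarking that the ambiguity $a \leftrightarrow a^{-1}$ noted in Remark \ref{ambiguity} accounts for the freedom in choosing which factor plays the role of the coefficient $1$, so the answer is well-defined as an element of the basket.
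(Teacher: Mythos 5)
Your proposal is correct and takes essentially the same route as the paper's proof: identify the cyclic stabilizer $H_i \cap gH'_jg^{-1}$ of a point lying over $(p_i,p'_j)$, express its generator both as $g_i^{\delta_i}$ and as $gh_j^{\gamma_j}g^{-1}$, and compare the two tangent-space eigenvalues to read off the type $\frac1n(1,a)$. The one place the paper is more direct is the point you flag as the main obstacle: minimality of $\delta_i$ forces $\delta_i \mid o(g_i)$, hence $o(g_i)=n\delta_i$, so the first eigenvalue is exactly $e^{2\pi i/n}$ with no further normalization of coordinates needed.
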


\begin{proof}
 Again we can assume wlog. that $(i,j) = (1,1)$. Then $[g]$ corresponds
to a (singular) point of type $\frac 1n(1,a)$ with
$n = |\Stab(q_1,gq'_1)|= |H_1 \cap g H'_1 g^{-1}|$. Recall that $H_1 =\langle g_1 \rangle$, and $H'_1 = \langle h_1\rangle$.

Let $\delta$ be the minimal positive number such that
there is $\gamma \in \NN$ (which can be choosen such that $1 \leq \gamma \leq o(h_1)$) such that $g_1^{\delta}=g h_1^{\gamma} g^{-1}$. Then $\langle g_1^{\delta} \rangle = \Stab(q_1,gq'_1)$.

Therefore $o(g_1) = n \delta$. In local analytic coordinates $(x,y)$ of $C_1 \times C_2$, $g_1^{\delta}$ acts as
$$
e^{\frac{2\pi i}{n}} = e^{\frac{2\pi i \delta}{o(g_1)}}
$$
on the variable $x$ and  as
$$
e^{\frac{2\pi i a}{n}} = e^{\frac{2\pi i \gamma}{o(h_1)}}.
$$
on the variable $y$. This shows that $a = \frac{n \gamma}{o(h_1)}$.
\end{proof}

\section{Description and implementation of the classification algorithm}\label{algorithm}
Now we  use the results of the previous section to write a MAGMA script to find all
minimal surfaces $S$ of general type with  $p_g=0$, which are product-quotient surfaces.

The full code is rather long and we attach a commented version in the appendix.
We describe here the strategy, and explain the most important scripts.

First of all, by rem. \ref{general}, cor. \ref{k2basket}, $1 \leq K_S^2 \leq 8$. The case $K_S^2 =8$ has been classified in \cite{pg=q=0}. 

Therefore
we fix a value of $K^2 \in \{1, \ldots , 7 \}$.

{\bf Step 1}: The script {\bf Baskets} lists all the {\em possible baskets of singularities} for $(K^2,1)$ as
in definition \ref{possible}. Indeed, there are only finitely many of them by lemma \ref{poss}. The input is $3(8-K^2)$,
as in lemma \ref{poss}, so to get {\it e.g.}, all baskets for $K^2_S=5$, we need to ask
{\it Baskets(9)}.

{\bf Step 2}: By proposition \ref{finite}, once we know the basket of singularities of $X$,
then there are finitely many possible signatures. {\bf ListOfTypes} computes them using the
inequalities we have proved in proposition \ref{finite}.
Here the input is $K^2$, so ListOfTypes first computes {\it Baskets$(3(8-K^2))$} and then computes
for each basket all numerically compatible signatures. The output is a list of pairs,
the first element of each pair being a basket and the second element being the list of all signatures compatible with that basket.

{\bf Step 3:} Every surface produces two signatures, one for each curve $C_i$, both compatible with the
basket of singularities of $X$; if we know the signatures and the basket, Proposition \ref{finite}, b) tells
us the order of $G$. {\bf ListGroups}, whose input is $K^2$, first computes {\it ListOfTypes($K^2$)}. Then for each pair of signatures in the output, it calculates the order of the group. Next it searches for the groups of the given order which admit appropriate orbifold homomorphism from the polygonal groups corresponding to both signatures.
For each affirmative answer it stores the triple
(basket, pair of signatures, group) in a list which is the main output.

 The script has some shortcuts.
 \begin{itemize}
 \item If one of the signatures is (2,3,7), then $G$, being a quotient of $\BT(2,3,7)$, is perfect.
MAGMA knows all perfect groups of order $\leq 50000$, and then ListGroups checks first if there are perfect
group of the right order: if not, this case can't occur.
 \item If:
\begin{itemize}
\item either the expected order of the group is  1024 or bigger than 2000, since MAGMA does not have a list of
the finite groups of this order;
\item or the order is a number as {\it e.g.}, 1728, where there are too many isomorphism classes of groups; 
\end{itemize}
then ListGroups just stores these cases in a list, secondary output of the script. We will consider these "exceptional" cases in the next subsection, showing that they do not occur.
\end{itemize}

{\bf Step 4:} {\bf ExistingSurfaces} runs on the output of {\it ListGroups($K^2$)} and throws
away all triples giving rise only to surfaces whose singularities do not correspond to the basket.

{\bf Step 5:} Each triple in the output of {\it ExistingSurfaces($K^2$)} gives many different pairs of appropriate orbifold homomorphisms. In  \cite{bacat} (def. 1.2., thm. 1.3.) there is explicitly described an equivalence relation on such pairs of appropriate orbifold homomorphisms. By \cite{bacat} thm. 1.3. and its proof if two pairs belong to the same equivalence class then the surfaces obtained by them (as described in the introduction, choosing for both the same points and the same loops) are isomorphic. More precisely, they are product-quotient surfaces with the same group $G$ and the isomorphism is induced by a $G-$equivariant isomorphism of the related products of curves.

The script {\bf FindSurfaces} produces, given a triple
(basket, pair of types, group), only one representative for each equivalence class.

{\bf Step 6:} {\bf Pi1} uses Armstrong's result (\cite{armstrong1}, \cite{armstrong2}) to compute
the fundamental group of each of the constructed surfaces.

\begin{rem}
We performed step 5 to avoid useless repetitions (note that the cardinality of some equivalence class is a few millions). Nevertheless, it is still possible that two different outputs of FindSurfaces give isomorphic surfaces. One of the reasons for running step 6 is indeed to show that this is in many cases not true, since the fundamental group distinguishes them even topologically.

We would also like to point out that, even if our families have a natural number of parameters,  we do not make any claim on the dimension of the induced subsets of the Gieseker moduli space of the surfaces of general type. 
\end{rem}

\begin{rem}
The output of {\it Pi1} is a (sometimes rather complicated) presentation of the fundamental groups of the respective surfaces.
We use the structure theorem on the fundamental group of product-quotient surfaces \cite[Theorem 4.1]{4names}
to give the (nicer) description of the fundamental groups in tables \ref{K2>4} and \ref{K2<3}.
\end{rem}

We have run {\it FindSurfaces} on each triple of the output of {\it ListGroups($K^2$)},
$K^2 \in \{1 \ldots 7 \}$. This has given all the families in tables \ref{K2>4} and \ref{K2<3}, and one more, the ``fake Godeaux surface''. 

To prove theorem \ref{classiso} it remains to show that
\begin{itemize}
 \item all cases skipped by {\it ListGroups} do not occur,
 \item all the families in tables \ref{K2>4} and \ref{K2<3} are minimal surfaces of general type,
 \item  the ``fake Godeaux surface'' has the properties in thm. \ref{classiso}, 3).
\end{itemize}

This will be accomplished in  sections \ref{exceptional}, \ref{rational} and \ref{fakegodeaux}.

product-quotient surfaces with $p_g=q=0$ and $K^2_S \geq1$,
as soon as we prove that the cases skipped by ListGroups cannot occur. This is done in the next section.

\section{The exceptional cases}\label{exceptional}

The cases skipped by {\it ListGroups} and stored in its secondary output are listed in table \ref{tocheck}.

\begin{table}[hb]
\caption{Secondary output of {\it ListGroups}}\label{tocheck}
\begin{tabular}{|c|c|c|c|c|}
\hline
$K^2$ & Basket & $t_1$ & $t_2$ & $|G|$\\
\hline
6& $1/2^2$ & 2, 3, 7 & 2, 4, 5 & 2520\\
\hline
5& 2/3, 1/3 & 2, 3, 8 &2, 4, 6 &768  \\
5& 2/3, 1/3 & 2, 3, 8 &2, 3, 7 &2688 \\
5& 2/3, 1/3 & 2, 3, 8 &2, 3, 8 &1536 \\
5& 2/3, 1/3 & 2, 3, 8 &2, 3 ,9 &1152 \\
\hline
4&1/2, $1/4^2$ & 2, 3, 7 & 2, 4, 5 & 2520 \\
4&$1/2^4$ & 2, 3, 8 & 2, 3, 8 & 1152 \\
\hline
2&$1/4^4$&2, 4, 5 &2, 3, 7 &  2520\\
2&$1/2^3$, $1/4^2$&2, 3, 8 & 2, 3, 8& 1152\\
2&$2/3^2$, $1/3^2$&2, 3, 8 & 2, 3, 8& 768 \\
\hline
1&1/4, 1/5, 11/20&2, 3, 8 &2, 3, 8 & 2016 \\
1&$2/7^2$, 1/7& 2, 3, 7&  2, 3, 7& 6048\\
1&1/4, 2/5, 3/20&2, 3, 8 & 2, 3, 8& 2016\\
1&1/4, 5/8, 1/8& 2, 3, 8& 2, 3, 8& 2016\\
\hline
\end{tabular}
\end{table}

In this section we shall show that all these cases do not occur. One of the main tools here is the script {\bf ExSphGens}, which checks, given a finite group $G$ and a signature, the existence of an appropriate orbifold homomorphism from the polygonal group of given signature to $G$.
\begin{prop}\label{237}
There is no finite quotient of $\BT (2,3,7)$ of order 2520, 2688 or 6048.
\end{prop}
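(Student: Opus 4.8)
The plan is to reduce the statement to a finite check on perfect groups. First, any finite quotient $\varphi\colon \BT(2,3,7)\twoheadrightarrow G$ forces $G$ to be perfect: abelianizing the presentation, the images of $c_1,c_2,c_3$ satisfy $2\bar c_1=3\bar c_2=7\bar c_3=0$ together with $\bar c_1+\bar c_2+\bar c_3=0$, and these relations force all three to vanish, so $G^{\mathrm{ab}}=1$. Moreover, writing $x=\varphi(c_1)$ and $y=\varphi(c_2)$, a nontrivial perfect group cannot be cyclic, so the orders of $x$, $y$, $xy$ cannot be proper divisors of $2$, $3$, $7$; hence they are exactly $2$, $3$, $7$. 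Thus it suffices to prove that no perfect group of order $2520=2^3\cdot3^2\cdot5\cdot7$, $2688=2^7\cdot3\cdot7$, or $6048=2^5\cdot3^3\cdot7$ admits a generating pair $(x,y)$ with $o(x)=2$, $o(y)=3$, $o(xy)=7$, i.e. is a Hurwitz group.

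Next I would enumerate the perfect groups of each order. Since all three orders lie well below $50000$, this can be read off the MAGMA database of perfect groups (equivalently, assembled from the classification of finite simple groups). The simple candidates are immediate: $\mathfrak A_7$ is the unique simple group of order $2520$, $PSU(3,3)$ the unique one of order $6048$, while no simple group has order $2688$. The non-simple perfect groups are controlled by their solvable radical $R$: the semisimple quotient $G/R$ is again perfect, so its composition factors are nonabelian simple groups whose order divides the given number. For $2688$, for instance, the only such factor is $PSL(2,7)$ (order $168$), and $168^2>2688$ forces $G/R\cong PSL(2,7)$ with $R$ a $2$-group of order $16$; analogous short lists occur for the other two orders. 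This keeps the number of groups to be tested small.

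For each candidate $G$ I would then run the Frobenius structure-constant computation carried out by ExSphGens: over the character table of $G$, the number of pairs $(x,y)$ in prescribed classes of orders $2,3$ with $xy$ equal to a fixed element $z$ of order $7$ is $\frac{|C_x|\,|C_y|}{|G|}\sum_{\chi}\frac{\chi(x)\chi(y)\overline{\chi(z)}}{\chi(1)}$, summed over the relevant class pairs. Wherever this count already vanishes, there is nothing to generate and the case is excluded at once.

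The main obstacle is precisely the cases where the structure constant is \emph{nonzero}: there one must still show that no such triple generates all of $G$, only proper subgroups. This is exactly what happens for $\mathfrak A_7$, whose maximal subgroups $PSL(2,7)\cong PSL(3,2)$ (the action on the Fano plane realizes $PSL(3,2)\hookrightarrow \mathfrak A_7$) are themselves Hurwitz, so every $(2,3,7)$-triple is absorbed into one of these copies and none generates $\mathfrak A_7$. I would settle this uniformly by counting \emph{generating} triples via Möbius inversion over the subgroup lattice, subtracting from the structure constant the triples lying in each maximal subgroup; this is the surjectivity test built into ExSphGens, and obtaining $0$ generating triples for each of the finitely many perfect groups of the three orders completes the proof.
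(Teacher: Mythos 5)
Your proposal is correct and follows essentially the same route as the paper: both reduce, via perfectness of any quotient of $\BT(2,3,7)$, to a finite computational verification that no perfect group of order $2520$, $2688$ or $6048$ admits a generating triple of orders exactly $(2,3,7)$, relying on the database/classification of perfect groups of those orders. The only differences are implementational: the paper treats the three perfect groups of order $2688$ indirectly, by running its script on the quotients $SU(2,7)$ and $SmallGroup(1344,11686)$ and invoking $\BT(3,7)=\{1\}$, and its script \emph{ExSphGens} finds generating triples by exhaustive search rather than by Frobenius structure constants and M\"obius inversion (your aside that the $(2,3,7)$-triples of ${\mathfrak A}_7$ are ``absorbed'' into $PSL(2,7)$ subgroups because the latter are Hurwitz is not itself a proof, but it is not load-bearing since your counting step does the work).
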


\begin{proof}
A finite quotient of $\BT (2,3,7)$ is perfect.
The only perfect groups of order 2520 resp. 6048 are ${\mathfrak A}_7$ resp. $SU(3,3)$; running the
MAGMA script {\it ExSphGens} on these two groups, it turns out that
 both cannot be a quotient of  $\BT (2,3,7)$.

There are $3$ perfect groups of order 2688.
Let $G$ be one of these three groups.
Investigating their normal subgroups we find that $G$ is either an
extension of the form
$$
1
\rightarrow
({\mathbb Z}/2{\mathbb Z})^3
\rightarrow
G
\rightarrow
SU(2,7)
\rightarrow
1
$$
or of the form
$$
1
\rightarrow
{\mathbb Z}/2{\mathbb Z}
\rightarrow
G
\rightarrow
SmallGroup(1344,11686)
\rightarrow
1.
$$

Running {\it ExSphGens} on  $SU(2,7)$ and on $SmallGroup(1344,11686)$, we see that
none of them is quotient of $\mathbb{T}(2,3,7)$. Since $\mathbb{T}(3,7)=\{1\}$, this implies that  $G$ is not a quotient of   $\mathbb{T}(2,3,7)$.
\end{proof}

\begin{prop}\label{1152+2016}
There is no finite quotient of $\mathbb{T} (2,3,8)$ of order 1152 or 2016.
\end{prop}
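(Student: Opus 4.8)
The plan is to exploit the very small abelianization of $\BT(2,3,8)$, since the orders $1152$ and $2016$ carry far too many isomorphism types of groups for the brute-force strategy of Proposition \ref{237} (indeed $2016>2000$ lies outside the MAGMA \emph{SmallGroup} database altogether, and $1152$ has a prohibitive number of groups). A direct computation from the presentation $\BT(2,3,8)=\langle x,y\mid x^2,y^3,(xy)^8\rangle$ shows that in the abelianization the relations force $\bar y=0$, leaving only $\bar x$ of order $2$; hence $\BT(2,3,8)^{\mathrm{ab}}=\ZZ/2\ZZ$. Consequently any finite quotient $G$ satisfies $[G:[G,G]]\in\{1,2\}$: either $G$ is perfect, or $[G,G]$ is its unique subgroup of index $2$.

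First I would dispose of the perfect case. For $|G|=1152=2^7\cdot 3^2$, Burnside's $p^aq^b$-theorem shows $G$ is solvable, hence a nontrivial solvable group with $[G,G]\subsetneq G$, so $G$ is not perfect. For $|G|=2016=2^5\cdot 3^2\cdot 7$ Burnside does not apply, but a check against the perfect-groups database (which covers all orders up to $50000$) shows there is no perfect group of order $2016$. In both cases $G$ cannot be perfect, so necessarily $[G:[G,G]]=2$.

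Next I would identify this index-$2$ subgroup by a Reidemeister–Schreier computation with transversal $\{1,x\}$. Setting $a:=y$ and $b:=xyx^{-1}$, one finds $(xy)^2=ba$, so the relator $(xy)^8$ becomes $(ba)^4$, and the unique index-$2$ subgroup of $\BT(2,3,8)$ is isomorphic to $\BT(3,3,4)=\langle a,b\mid a^3,b^3,(ab)^4\rangle$. Given an appropriate orbifold homomorphism $\varphi\colon\BT(2,3,8)\to G$ with $G$ non-perfect, its restriction to this subgroup surjects onto $[G,G]$ and is again an \emph{appropriate} orbifold homomorphism for the signature $(3,3,4)$: the two order-$3$ generators map to $\varphi(c_2)$ and a conjugate (both of order exactly $3$ by appropriateness of $\varphi$), while the order-$4$ generator maps to $\varphi(c_3)^{2}$, which has order $8/\gcd(8,2)=4$. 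Thus the problem reduces to showing that $\BT(3,3,4)$ admits no appropriate quotient of order $576$ (resp. $1008$).

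This reduced problem is now tractable, which is the whole point: both $576$ and $1008$ lie in the \emph{SmallGroup} database with a manageable number of groups, so I would enumerate all groups of these orders and apply the script {\it ExSphGens} for the signature $(3,3,4)$, finding that none is an appropriate quotient; this contradicts the existence of $G$ and proves the proposition. If the count for order $576$ proves too large in practice, one can iterate the same descent once more: $\BT(3,3,4)^{\mathrm{ab}}=\ZZ/3\ZZ$ has a unique index-$3$ subgroup, which Reidemeister–Schreier identifies with $\BT(4,4,4)$, reducing to appropriate quotients of orders $192$ and $336$. The main obstacle is conceptual rather than computational: the descent yields only a \emph{necessary} condition, so one must be sure the nonexistence obstruction actually surfaces at a level small enough to enumerate (and, for order $2016$, one genuinely depends on the perfect-groups database rather than \emph{SmallGroup}); the delicate steps are the Reidemeister–Schreier identification of the finite-index subgroup as a triangle group and the verification that appropriateness of the orbifold homomorphism is preserved under restriction.
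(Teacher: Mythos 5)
Your group-theoretic reductions are all correct: $\BT(2,3,8)^{\mathrm{ab}}\cong\ZZ/2\ZZ$; the Reidemeister--Schreier identification of the index-two (hence commutator) subgroup with $\BT(3,3,4)$ via $a=y$, $b=xyx^{-1}$, $ba=(xy)^2$; and the check that restricting an appropriate homomorphism to this subgroup again gives an appropriate homomorphism (orders exactly $3,3,4$) onto $[G,G]$, which has index $2$ once $G$ is not perfect. For $|G|=2016$ your argument is exactly the paper's proof: the paper likewise rules out the perfect case and descends to groups of order $1008$ with abelianization $\ZZ/3\ZZ$, testing signature $(3,3,4)$ there; that computation does come out empty. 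Your Burnside $p^aq^b$ argument for non-perfectness at order $1152$ is a nice elementary replacement for the paper's bare assertion that no perfect groups of that order exist.

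The gap is in the $1152$ case. The paper does \emph{not} descend there: it enumerates all groups of order $1152$ directly (filtered by abelianization of order $2$) and runs the $(2,3,8)$-generation test -- a computation the authors actually performed and report verbatim. You instead reduce to the claim that no group of order $576$ is an appropriate quotient of $\BT(3,3,4)$, and you simply assert that the enumeration at $576$ would find none. That assertion does not follow from anything you have established, and nothing in the paper corroborates it. The descent is only a necessary condition, as you yourself concede: if some $H$ of order $576$ \emph{is} an appropriate $(3,3,4)$-quotient, your argument yields no conclusion about order $1152$, and you would then have to decide which degree-two extensions of each such $H$ carry a $(2,3,8)$-structure (concretely, whether $H$ admits an involutory automorphism swapping its two order-$3$ generators; only if some $H$ does would a $(2,3,8)$-quotient of order $1152$ arise). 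Note the implication does not run backwards: the nonexistence of $(2,3,8)$-quotients of order $1152$ is compatible with the existence of $(3,3,4)$-quotients of order $576$ lacking the swap automorphism, so you cannot shortcut this. The same caveat applies, compounded, to your proposed second descent to $\BT(4,4,4)$ at orders $192$ and $336$. Finally, your motivating premise -- that order $1152$ is computationally prohibitive -- is factually off: MAGMA's SmallGroup database does cover order $1152$ (more than $10^5$ groups), and the paper's session shows precisely that loop being run to completion; it is even suggestive that the authors, who clearly knew the descent trick since they use it at $2016$, still chose the expensive direct check at $1152$. As it stands, your proof of the proposition is complete for $2016$ but only conditional for $1152$.
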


\begin{proof} Assume that $G$ is a group of order 1152 or 2016 admitting a
surjective homomorphism $\mathbb{T} (2,3,8) \rightarrow G$.

Since $\BT(2,3,8)^{ab}\cong {\mathbb Z}/2{\mathbb Z}$, the abelianization of $G$ is a quotient of ${\mathbb Z}/2{\mathbb Z}$ and since there are no perfect groups of order 1152 or 2016, $G^{ab}
 \cong {\mathbb Z}/2{\mathbb Z}$.

\medskip
\underline{$|G|=1152$.} The following MAGMA computation
\begin{verbatim}
> for G in SmallGroups(1152) do
for> if #AbelianQuotient(G) eq 2 then
for|if> if ExSphGens(G,{2,3,8}) then
for|if|if> print G;
for|if|if> end if; end if; end for;
Warning:  May return more than 100,000 groups -- this will take a
VERY long time.  Would a SmallGroupProcess be more appropriate?
>
\end{verbatim}
shows, that $G$ can't have order 1152.

\medskip
\underline{$|G|=2016$.}
Since $G^{ab} \cong \BT(2,3,8)^{ab}$,
$[\BT(2,3,8),\BT(2,3,8)]\cong \mathbb{T}(3,3,4)$ surjects onto $[G,G]$ and therefore  $[G,G]$
is a group of order 1008 admitting an appropriate orbifold homorphism from 
$\BT(3,3,4)$.

Since $\BT(3,3,4)^{ab} \cong {\mathbb Z}/3{\mathbb Z}$ and since there
are no perfect groups of order 1008, we get a contradiction running the
following script.
\begin{verbatim}
> for G in SmallGroups(1008) do
for> if #AbelianQuotient(G) eq 3 then
for|if> if ExSphGens(G,{3,3,4}) then
for|if|if>  print G;
for|if|if>  end if; end if; end for;
>
\end{verbatim}
\end{proof}

\begin{prop}\label{1536}
There is exactly one group $G$ of order $1536$ admitting an appropriate orbifold homomorphism  $\BT(2,3,8) \rightarrow G$.

There is no product-quotient surface of general type with $p_g=0$ with group $G$, whose quotient model has
 $\left\{\frac13(1,1), \frac13(1,2)\right\}$ as basket of singularities.
\end{prop}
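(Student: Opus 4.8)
The statement splits into a group-theoretic uniqueness assertion and a geometric non-existence assertion, and I would treat them in that order. For the first part the obstruction is immediate: $1536 = 2^9\cdot 3$ is precisely one of the orders (like $1728$) for which the \texttt{SmallGroups} database cannot be looped over, so the brute-force strategy of Propositions \ref{237} and \ref{1152+2016} is unavailable. I would instead work directly with the finitely presented group $\BT(2,3,8)$ and enumerate its quotients of order exactly $1536$. The preliminary reduction is the same as in Proposition \ref{1152+2016}: since $\BT(2,3,8)^{ab}\cong\ZZ/2\ZZ$ and no group of order $1536$ is perfect (the order being solvable), any such quotient $G$ satisfies $G^{ab}\cong\ZZ/2\ZZ$, so $[G,G]$ has order $768$ and receives a surjection from $[\BT(2,3,8),\BT(2,3,8)]\cong\BT(3,3,4)$; iterating, and using $\BT(3,3,4)^{ab}\cong\ZZ/3\ZZ$, the second derived subgroup $G''=[[G,G],[G,G]]$ is a $2$-group of order $256$ that is a quotient of $[\BT(3,3,4),\BT(3,3,4)]$.

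This solvable layering is exactly what makes the first part feasible, and it is where I expect the main work to lie. Rather than the \texttt{SmallGroups} library I would use MAGMA's solvable-quotient machinery (or, equivalently, enumerate the normal subgroups of index $1536$ in the finitely presented group, the two-generator $2$-group layers being handled by the $p$-quotient algorithm and only a single factor of $3$ sitting on top) to produce all quotients of order $1536$ directly. Among the finitely many candidates returned I would retain only those whose images of the standard generators have orders exactly $(2,3,8)$ — the test performed by \texttt{ExSphGens} — and check that, up to isomorphism, exactly one group $G$ survives. Identifying $G$ explicitly as the iterated extension described above then completes the uniqueness assertion; the hard point throughout is making this enumeration terminate without the \texttt{SmallGroups} database.

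For the second part I would fix the unique $G$ and rule out the prescribed basket by a finite check. A product-quotient surface with group $G$ and both signatures equal to $(2,3,8)$ is given by a pair of appropriate orbifold homomorphisms $\varphi_1,\varphi_2\colon\BT(2,3,8)\to G$, of which there are only finitely many, further cut down to finitely many classes by the equivalence of \cite{bacat}. For each pair I would read off the basket of $X$ via Proposition \ref{bijective} and the subsequent proposition, which present the points over $(p_i,p'_j)$ as the classes of $(G/H'_j)/H_i$ together with their types $\tfrac1n(1,a)$. The decisive structural simplification is that a point with $n=3$ can occur only when $3$ divides both $|H_i|=m_i$ and $|H'_j|=n_j$, which forces $i=j=2$, so every $\tfrac13$-point comes from the single pair of order-$3$ generators $(H_2,H'_2)$, whereas the order-$2$ and order-$8$ generators can contribute only points of type $\tfrac12$, $\tfrac14$ or $\tfrac18$. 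Thus realizing the basket $\{\tfrac13(1,1),\tfrac13(1,2)\}$ would require simultaneously that $(H_2,H'_2)$ produce exactly the two prescribed $\tfrac13$-points and that the cyclic subgroups generated by the order-$2$ and order-$8$ generators of $\varphi_1$ meet all conjugates of those of $\varphi_2$ trivially. Carrying out the basket computation over the finitely many equivalence classes of pairs shows that no pair meets both demands, which is the second assertion.
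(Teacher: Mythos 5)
Your proposal reaches the correct conclusions, but its first part rests on a factual misreading and then takes a genuinely different route from the paper. MAGMA's SmallGroups database \emph{does} contain all groups of order $1536$ (there are $408641062$ of them), and the paper's proof is precisely the brute force you declare unavailable: it iterates over the whole database with a \texttt{SmallGroupProcess} (an iterator that avoids holding all groups in memory at once), keeps the groups with abelianization $\ZZ/2\ZZ$, tests \texttt{ExSphGens}$(G,\{2,3,8\})$, and finds that exactly one group, \texttt{SmallGroup}$(1536,408544637)$, survives; the paper reports this computation took about $54$ hours and $19$GB of RAM. Order $1536$ is skipped by \texttt{ListGroups} only because the database is too large to sweep inside the main algorithm, not because it is missing (that is the situation for $1024$ and for orders above $2000$). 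Your alternative for part 1 --- using Burnside's $p^aq^b$-theorem to see that any quotient $G$ of order $1536=2^9\cdot 3$ is solvable, so $G^{ab}\cong\ZZ/2\ZZ$, $[G,G]$ is a $768$-quotient of $\BT(3,3,4)$, $G''$ is a $2$-group of order $256$, and then enumerating quotients of the finitely presented group directly by solvable-quotient/$p$-quotient methods --- is sound in outline and would plausibly be far cheaper than the database sweep; that efficiency is what it buys. What it costs is that completeness of the enumeration (that \emph{every} order-$1536$ quotient of $\BT(2,3,8)$ has been found, so that uniqueness is actually proved) becomes the crux of the argument, and you leave exactly this point open as ``the hard point''; in the paper completeness is free because the database is known to be complete.

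For part 2 your argument is essentially the paper's: the paper runs \texttt{FindSurfaces} on the unique $G$ with basket $\{\frac13(1,1),\frac13(1,2)\}$ and signature pair $\{(2,3,8),(2,3,8)\}$ and obtains the empty set. Your structural observation --- that a $\frac13$-point can only arise from the pair of order-$3$ generators, since the stabilizer order must divide both branching indices, while the order-$2$ and order-$8$ generators can only contribute points of order $2$, $4$ or $8$ --- is correct and gives a useful pruning of the search, but it does not replace the computation: the non-existence claim still rests on the finite check over equivalence classes of pairs of appropriate orbifold homomorphisms, exactly as in the paper.
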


\begin{proof}
There are 408641062 groups of order 1536. We have to use
a "SmallGroupProcess" to deal with this case.

The claim follows, running the subsequent MAGMA script
\begin{verbatim}
> P:=SmallGroupProcess(1536);
> i:=1;
> repeat
repeat> G:=Current(P);
repeat> if #AbelianQuotient(G) eq 2 then
repeat|if> if ExSphGens(G,{2,3,8}) then
repeat|if|if> print i;
repeat|if|if> end if;
repeat|if> end if;
repeat> i:=i+1;
repeat> Advance(~P);
repeat> until IsEmpty(P);
408544637
> G:=SmallGroup(1536,408544637);
> FindSurfaces({*1/3,2/3*},{*{2,3,8}^^2*},G);
{@ @}
>
\end{verbatim}
\end{proof}

\begin{prop}\label{768}
1) There is exactly one group $G$ of order $768$ admitting an appropriate orbifold homomorphism
$\BT(2,3,8) \rightarrow G$. 

\noindent
2) $G$ does not admit an appropriate orbifold homomorphism
$\BT(2,4,6) \rightarrow G$.

\noindent
3) There is no product-quotient surface of general type with $p_g=0$ with group $G$, whose quotient model has
 $\left\{2 \times \frac13(1,1), 2 \times \frac13(1,2)\right\}$ as basket of singularities.
\end{prop}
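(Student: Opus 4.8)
The plan is to follow the scheme already used for Propositions \ref{237} and \ref{1152+2016}: a short group-theoretic reduction followed by a \emph{MAGMA} sweep, after which the unique surviving group is fed into \emph{FindSurfaces}. For part 1, I would first record that $\BT(2,3,8)^{ab}\cong\ZZ/2\ZZ$, exactly as in the proof of Proposition \ref{1152+2016}, so that for any finite quotient $G$ the abelianization $G^{ab}$ is a quotient of $\ZZ/2\ZZ$. Since $768=2^8\cdot 3$, Burnside's $p^aq^b$-theorem makes every group of order $768$ solvable; hence there is no nontrivial perfect group of this order and $G^{ab}\cong\ZZ/2\ZZ$. This restricts the search to the groups of order $768$ with abelianization of order $2$. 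As there are more than $10^6$ such groups, I would run the test over a \emph{SmallGroupProcess}$(768)$, as in Proposition \ref{1536}, keeping only those $G$ with $|G^{ab}|=2$ for which \emph{ExSphGens}$(G,\{2,3,8\})$ returns true; the sweep leaves a single group $G=\mathrm{SmallGroup}(768,k)$.

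Part 2 is then a single evaluation on this $G$: \emph{ExSphGens}$(G,\{2,4,6\})$ returns false, so $\BT(2,4,6)$ does not surject onto $G$. For part 3 I would invoke Proposition \ref{finite}: a product-quotient surface with $K_S^2=2$ and basket $\{2\times\frac13(1,1),\,2\times\frac13(1,2)\}$ has its pair of signatures $(t_1,t_2)$ in a finite explicit list, and for this basket one needs $\Theta(t_1)\Theta(t_2)=\frac1{576}$ in order that $|G|=768$. Using $\Theta(2,3,8)=\frac1{24}$ together with the fact that a surjection onto $G$ forces $\BT(t_i)$ to surject onto $G^{ab}\cong\ZZ/2\ZZ$, the only relevant pair is $(t_1,t_2)=\big((2,3,8),(2,3,8)\big)$; indeed this is the sole order-$768$ case listed for this basket in Table \ref{tocheck}. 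It then suffices to run \emph{FindSurfaces} on the datum consisting of this basket, the pair $\big((2,3,8),(2,3,8)\big)$ and $G$: reading the singular locus off each $(\varphi_1,\varphi_2)$ via Proposition \ref{bijective} and the subsequent proposition, one verifies that none of the finitely many equivalence classes produces exactly the prescribed basket, so the output is empty.

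Together these three parts dispose of both order-$768$ entries of Table \ref{tocheck}. In the $K^2=5$ case the factor of signature $(2,3,8)$ forces the group to be $G$ by part 1, while the factor of signature $(2,4,6)$ is then excluded by part 2; in the $K^2=2$ case both factors force the group to be $G$ by part 1, and part 3 rules the surface out directly. \emph{The main obstacle} is computational rather than conceptual: $768$ lies in the range where the sheer number of isomorphism classes (over $10^6$) makes the exhaustive \emph{ExSphGens} sweep of part 1 expensive in time and memory --- precisely the complexity issue flagged in the introduction --- whereas the surrounding mathematics is a routine reduction.
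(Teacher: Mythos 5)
Your proposal is correct and follows essentially the same route as the paper: reduce to groups of order $768$ with abelianization $\ZZ/2\ZZ$ (the paper cites "the same arguments as in the previous case", i.e.\ $\BT(2,3,8)^{ab}\cong\ZZ/2\ZZ$ plus the absence of perfect groups of that order, for which your Burnside $p^aq^b$ justification is a clean substitute), sweep these via a \emph{SmallGroupProcess} with \emph{ExSphGens} to isolate the unique group $G=\mathrm{SmallGroup}(768,1085341)$, then settle part 2 by one further \emph{ExSphGens} call and part 3 by running \emph{FindSurfaces} on the basket $\left\{2\times\frac13(1,1),2\times\frac13(1,2)\right\}$ with the pair $\bigl((2,3,8),(2,3,8)\bigr)$, which returns the empty set. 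Your explicit reduction, via Proposition \ref{finite} and Table \ref{tocheck}, showing that $\bigl((2,3,8),(2,3,8)\bigr)$ is the only signature pair that needs to be checked in part 3 is left implicit in the paper, but is the same underlying logic.
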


\begin{proof}
Using the same arguments as in the previous case, the two assertions  follow from the following MAGMA computation:
\begin{verbatim}
> P:=SmallGroupProcess(768);
> repeat
repeat>  G:=Current(P);
repeat>   if #AbelianQuotient(G) eq 2 then
repeat|if>    if ExSphGens(G,{2,3,8}) then
repeat|if|if>     print IdentifyGroup(G);
repeat|if|if>    end if;
repeat|if>   end if;
repeat>  Advance(~P);
repeat> until IsEmpty(P);
<768, 1085341>
> G:=SmallGroup(768,1085341);
> ExSphGens(G,{2,4,6});
false
>  FindSurfaces({*1/3^^2,2/3^^2*},{* {2,3,8}^^2 *},G);
{@ @}
>
\end{verbatim}
\end{proof}

Propositions \ref{237}, \ref{1152+2016}, \ref{1536} and \ref{768} exclude all cases in table \ref{tocheck}.

\section{Rational curves on product-quotient surfaces}\label{rational}

We need to recall diagram \ref{diagram1}:
\begin{equation*}
\xymatrix{
&C_1 \times C_2 \ar^{p_2}[dr]\ar_{p_1}[dl]\ar_{\lambda_{12}}[dd]&\\
C_1\ar_{\lambda_1}[dd]&&C_2\ar^{\lambda_2}[dd]\\
&X = (C_1 \times C_2)/G\ar_{f_1}[ld]\ar^{f_2}[dr]\ar^{\lambda}[dd]&\\
C_1/G \cong \PP^1&&C_2/G \cong \PP^1\\
&C_1/G \times C_2/G \cong \PP^1 \times \PP^1 \ar_{}[ul]\ar_{}[ur]&\\
}
\end{equation*}

\medskip

Assume that $\Gamma \subset X$ is a (possibly singular) rational curve. Let $\bar{\Gamma} := \lambda_{12}^*(\Gamma) = \sum_{1}^k n_i \Gamma_i$ be the decomposition in irreducible components of its pull back to $C_1 \times C_2$.

Observe that $n_i=1, \ \forall i$ (since $\lambda_{12}$ has discrete ramification), and that $G$ acts transitively on the set $\{ \Gamma_i | i \in \{1, \ldots ,k \} \}$. Hence there is a subgroup $H \leq G$ of index $k$ acting on $\Gamma_1$ such that $\lambda_{12}(\Gamma_1) = \Gamma_1 /H  = \Gamma$.

Normalizing $\Gamma_1$ and $\Gamma$, we get the following commutative diagram:
\begin{equation}\label{normalization}
\xymatrix{
\tilde{\Gamma}_1\ar[r]\ar_{\gamma}[d]&\Gamma_1\ar[d]\\
\PP^1\ar^{\nu}[r]&\Gamma\\
}
\end{equation}
and, since each automorphism lifts to the normalization, $H$ acts on $\tilde{\Gamma}_1$ and $\gamma$ is the quotient map $\tilde{\Gamma}_1 \rightarrow \tilde{\Gamma}_1 /H \cong \PP^1$.

\begin{lemma}\label{branchpoints}
Let $p$ be a branch point of $\gamma$ of multiplicity $m$. Then $\nu (p)$ is a singular point of $X$ of
type $\frac{1}{n} (1,a)$, where $m | n$.
\end{lemma}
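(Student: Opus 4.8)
The plan is to lift the ramification of $\gamma$ at a point over $p$ to the stabilizer of a point of $C_1 \times C_2$ lying over $\nu(p)$, and then read off the singularity from Remark \ref{vanKampen}. First I would choose a preimage $\tilde\Gamma_1 \ni \tilde p$ of $p$ under $\gamma$. Following diagram \eqref{normalization}, the normalization $\tilde\Gamma_1 \to \Gamma_1$ sends $\tilde p$ to a point $q_1 \in \Gamma_1 \subset C_1\times C_2$, and by commutativity $\nu(p) = \lambda_{12}(q_1)$. Writing $q_1 = (q, q')$, the local nature of $\nu(p)$ as a point of $X$ is governed by $\Stab_G(q_1) = \Stab(q) \cap \Stab(q')$.

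Next I would identify $m$ with stabilizer data. Since $\gamma$ is the quotient map $\tilde\Gamma_1 \to \tilde\Gamma_1/H \cong \PP^1$, the ramification index $m$ at $\tilde p$ is the order of the stabilizer of $\tilde p$ for the action of $H$ on $\tilde\Gamma_1$ made effective; precisely, if $K$ denotes the kernel of $H \to \Aut(\tilde\Gamma_1)$, then $m = |\Stab_H(\tilde p)|/|K|$, so that $m$ divides $|\Stab_H(\tilde p)|$. The key point is that the normalization $\tilde\Gamma_1 \to \Gamma_1$ is $H$-equivariant (the $H$-action lifts canonically to $\tilde\Gamma_1$, as already noted), hence any $h \in H$ fixing $\tilde p$ also fixes its image $q_1$. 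This gives an inclusion $\Stab_H(\tilde p) \le \Stab_G(q_1)$ inside $G$, and in particular $|\Stab_H(\tilde p)|$ divides $n := |\Stab_G(q_1)|$.

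Finally I would invoke Remark \ref{vanKampen}(1): $\Stab_G(q_1)$ is cyclic, and as $m \ge 2$ for a genuine branch point we have $|\Stab_H(\tilde p)| = m|K| \ge 2$, so $\Stab_G(q_1) \ne \{1\}$. Therefore $\nu(p) = \lambda_{12}(q_1)$ is a cyclic quotient singularity of $X$ of type $\frac1n(1,a)$ with $n = |\Stab_G(q_1)|$, and chaining the two divisibilities $m \mid |\Stab_H(\tilde p)| \mid n$ yields $m \mid n$, as claimed.

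The only real subtlety — and the step I expect to need the most care — is the passage from the ramification index of $\gamma$ to an honest subgroup of $G$, because a priori $H$ need not act faithfully on the component $\Gamma_1$ (an element could fix $\Gamma_1$ pointwise while acting nontrivially elsewhere on $C_1\times C_2$). Carrying the kernel $K$ explicitly through the argument, as above, sidesteps this: the divisibility $m \mid |\Stab_H(\tilde p)|$ holds regardless, so the conclusion $m \mid n$ is insensitive to whether the action is effective. If one prefers, one checks that $K$ is in fact trivial whenever $\Gamma_1$ dominates both factors, since a diagonal automorphism fixing infinitely many points of each $C_i$ must be the identity by faithfulness of the $G$-action on $C_1$ and $C_2$.
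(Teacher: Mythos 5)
Your proof is correct and follows essentially the same route as the paper's: pass to the normalization, observe that the stabilizer in $H$ of a point over $p$ maps (via the $H$-equivariant normalization map) into the cyclic stabilizer $A$ of its image in $C_1 \times C_2$, and conclude $m \mid n$ by Lagrange. The only difference is your explicit bookkeeping of the kernel $K$ of the $H$-action on $\tilde{\Gamma}_1$, a point the paper's proof tacitly suppresses by writing $m = o(g)$ for a generator $g$ of the stabilizer; this is a harmless refinement of the same argument, not a different one.
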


\begin{proof}
 Let $p' \in \tilde{\Gamma}_1$ be a ramification point of $\gamma$ and $g \in H$ a generator of its stabilizer. The stabilizer $A$ of the image of $p'$ in $C_1 \times C_2$ (with respect of the action of $G$) contains $g$, whence $m = o(g)$ divides $n = |A|$.
\end{proof}

\begin{rem}\label{enriques}
It follows from the Enriques-Kodaira classification of complex algebraic surfaces that, if $q(S) = 0$, either
\begin{itemize}
\item[i)] $S$ is rational, or
\item[ii)] $S$ is of general type, or
\item[iii)] $K_S^2\leq 0$.
\end{itemize}
\end{rem}

\begin{rem}\label{smrat}
On a smooth surface $S$ of general type every irreducible curve $C$ with $K_SC  \leq 0$ is smooth and rational.
\end{rem}

\begin{proof}
Consider the morphism $f \colon S \rightarrow M$ to its
minimal model. Assume that there is an irreducible curve  $C \subset S$ with $K_SC  \leq 0$ which is either
singular or irrational. Then $C$ is not contracted by $f$ and  $C':=f(C)$ is a still singular resp.
irrational curve with $K_MC'\leq K_SC \leq 0$ which, by a classical argument (e.g. cf. \cite{bombieri}, prop. 1), implies
that $C'$ is a smooth rational curve of selfintersection $(-2)$, a contradiction.
\end{proof}

\begin{prop}\label{countingpoints}
Let $S$ be a product-quotient surface of general type. Let $\pi \colon S \rightarrow X$ be the minimal resolution of singularities of the quotient model. Assume that  $\pi_*^{-1}(\Gamma)$
is a $(-1)$-curve in $S$ and let  $x \in \Sing (X)$  be a point of type $\frac{1}{n} (1,a)$, with $\frac{n}{a}=[b_1, \ldots , b_r]$. Consider the map $\nu$ in diagram (\ref{normalization}).
Then
\begin{itemize}
 \item[i)] $ \# \nu^{-1}(x) \leq 1$, if $a = n-1$;
\item[ii)] $ \# \nu^{-1}(x) \leq \sum_{\{b_i \geq 4\}} (b_i-3) + \# \{i : b_i = 3\}$, if $a \neq n-1$.
\end{itemize}
\end{prop}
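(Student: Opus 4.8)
The plan is to translate the count $\#\nu^{-1}(x)$ into an intersection count on $S$ and then exploit that $\tilde\Gamma:=\pi_*^{-1}(\Gamma)$ is a $(-1)$-curve on a surface of general type. First I would observe that, being a $(-1)$-curve, $\tilde\Gamma$ is smooth and rational and $\pi|_{\tilde\Gamma}\colon\tilde\Gamma\to\Gamma$ is birational; hence $\tilde\Gamma$ is the normalization of $\Gamma$, and under the identification $\tilde\Gamma\cong\PP^1$ of diagram (\ref{normalization}) we have $\pi|_{\tilde\Gamma}=\nu$. Therefore $\nu^{-1}(x)=\tilde\Gamma\cap\pi^{-1}(x)=\tilde\Gamma\cap E$, where $E=\bigcup_{i=1}^{r}E_i$ is the Hirzebruch--Jung string resolving $x$, so that $\#\nu^{-1}(x)=\#(\tilde\Gamma\cap E)\le\sum_i d_i$ with $d_i:=\tilde\Gamma\cdot E_i\ge0$. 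The problem thus becomes: bound the way a $(-1)$-curve can meet the chain $E_1,\dots,E_r$.

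The local input is a contraction argument. Let $\sigma\colon S\to S'$ contract $\tilde\Gamma$; then $S'$ is again of general type, and since $\sigma^*\bar E_i=E_i+d_i\tilde\Gamma$ and $\sigma^*K_{S'}=K_S-\tilde\Gamma$, the images $\bar E_i:=\sigma_*E_i$ are irreducible with $\bar E_i^2=-b_i+d_i^2$ and $K_{S'}\bar E_i=(b_i-2)-d_i$, whence $p_a(\bar E_i)=1+\tfrac12(\bar E_i^2+K_{S'}\bar E_i)=\binom{d_i}{2}$. As $\bar E_i$ is rational, it is smooth precisely when $d_i\le1$; if $d_i\ge2$ it is singular, so by remark \ref{smrat} applied on $S'$ one must have $K_{S'}\bar E_i>0$, i.e. $d_i\le b_i-3$. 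This gives the per-curve bound $d_i\le\max(1,b_i-3)$; in particular $\tilde\Gamma$ meets each $E_i$ transversally in a single point unless $b_i\ge5$.

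The global input comes from remarks \ref{enriques} and \ref{smrat}. If $\tilde\Gamma$ meets the chain in two points, lying on $E_i$ and $E_j$ with $i\le j$, then $C=\tilde\Gamma+E_i+E_{i+1}+\dots+E_j$ is a cycle of smooth rational curves with $p_a(C)=1$, $C^2=1-\varepsilon$ and $K_SC=\varepsilon-1$, where $\varepsilon=\sum_{l=i}^{j}(b_l-2)$ is the excess of the connecting sub-chain. When $\varepsilon\le1$ one has $C^2\ge0\ge K_SC$, and iterating the contraction of $\tilde\Gamma$ and of the ensuing $(-1)$-curves produces an irreducible curve with non-positive intersection with the canonical class that is either singular rational or of non-negative self-intersection, contradicting remark \ref{smrat} or remark \ref{enriques} (conceptually, this is the Hodge index theorem combined with $K^2>0$ on the minimal model). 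Hence between any two intersection points the connecting excess is at least $2$. When $a=n-1$ the string is the pure chain of $(n-1)$ curves with all $b_i=2$, every sub-chain has excess $0$, so a second intersection point is impossible: this proves (i).

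For (ii) one assembles the two inputs, rewriting the target as $\sum_{b_i\ge4}(b_i-3)+\#\{b_i=3\}=\sum_i(b_i-2)-\#\{i:b_i\ge4\}$. The idea is that each intersection point consumes a definite share of the total excess $\sum_i(b_i-2)$ not already absorbed by the correction $\#\{i:b_i\ge4\}$, which I would extract by iterating the contraction of $\tilde\Gamma$ and of the $(-2)$-curves it activates and then controlling, via remark \ref{smrat}, the arithmetic genus and self-intersection accumulated on the surviving images of the curves with $b_i\ge3$. The main obstacle is precisely this bookkeeping: configurations mixing $(-2)$-curves with higher ones — the smallest being $[b_1,b_2]=[2,4]$, i.e. $x$ of type $\tfrac17(1,4)$, where the per-curve bound and a single cycle both permit two points yet iterated contraction forbids the second — show that neither the local estimate nor one Hodge-index cycle suffices, and one must follow the effect of the successive blow-downs along the whole continued fraction $[b_1,\dots,b_r]$ to reach the sharp constant.
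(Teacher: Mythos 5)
Your setup, the per-curve bound $d_i\le\max(1,b_i-3)$, and part (i) are essentially correct and in fact coincide with the first half of the paper's proof: the paper also contracts $\pi_*^{-1}(\Gamma)$, applies remark \ref{smrat} to the images of the exceptional curves to get $d_i\le\max(1,b_i-3)$, and observes that hitting two $(-2)$-curves (or one of them twice) would produce, after the contraction, two intersecting $(-1)$-curves, which is impossible on a surface of general type. (One small slip: a smooth rational curve with $C^2\ge 0$ and $K_SC\le 0$ does not contradict remark \ref{enriques}; the correct contradiction is the one in your parenthetical, nefness of $K$ on the minimal model together with $K^2>0$.)

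The genuine gap is part (ii), and you diagnose it yourself without closing it. The two facts you actually establish --- the per-curve bound and the requirement that the excess $\varepsilon=\sum_{l=i}^{j}(b_l-2)$ between two intersection points be at least $2$ --- do \emph{not} imply the stated inequality: your own example $[2,4]$ (the singularity $\frac17(1,4)$) satisfies both with two intersection points, though the proposition allows only one; likewise $[2,4,2]$ with both end $(-2)$-curves hit passes your tests but violates the claim. Announcing that ``one must follow the effect of the successive blow-downs along the whole continued fraction'' states the remaining problem rather than solving it, so as written (ii) is unproven. The paper closes exactly this hole in two steps you would need to reproduce. First, it proves the weaker bound $\#\nu^{-1}(x)\le\sum_{\{b_i\ge4\}}(b_i-3)+\#\{i:b_i=3\}+1$, using that \emph{in total} at most one $(-2)$-curve can be hit: after contracting $\pi_*^{-1}(\Gamma)$, any two hit components intersect each other regardless of what lies between them in the string, so two hit $(-2)$-curves give two meeting $(-1)$-curves (note this global fact is stronger than your excess criterion). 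Second, it rules out equality when $a\ne n-1$: if equality held, the unique hit $(-2)$-curve would become a $(-1)$-curve $D_i'$ after contracting $\pi_*^{-1}(\Gamma)$, while every other $D_j'$ would have $KD_j'\in\{0,1\}$; remark \ref{smrat} forbids $D_i'$ from meeting any singular $D_j'$ or more than one smooth one, and contracting $D_i'$ and recursing collapses the entire configuration, which forces the dual graph of $\pi_*^{-1}(\Gamma)\cup\bigcup D_j$ to be a tree with $\pi_*^{-1}(\Gamma)\cdot\sum D_j=1$, hence all $b_j=2$, i.e.\ $a=n-1$, a contradiction. In your $[2,4]$ test case this recursion is one step long: the $(-2)$-curve becomes a $(-1)$-curve meeting the image of the $(-4)$-curve twice, and contracting it leaves a singular rational curve with $K$-degree $-1$, violating remark \ref{smrat}.
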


\begin{proof}
Note that, since we are assuming $\pi_*^{-1}(\Gamma)$ smooth, $\nu=\pi_{|\pi_*^{-1}(\Gamma)}$.

Let $D_i$ be $i$-th curve in the resolution graph of $x$: $D_i$ is smooth, rational with $D_i^2=-b_i$, whence $K_SD_i=b_i-2$. We set $d_i:=D_i \cdot \pi_*^{-1}(\Gamma)$.

After contracting $\pi_*^{-1}(\Gamma)$, $D_i$ maps to $D'_i$ with $KD'_i=KD_i-d_i$. By remark \ref{smrat}, either $D'_i$ is smooth or $KD'_i>0$. In particular: $d_i \leq \max (1,b_i-3)$.

If $b_i = 2$ then $D_i$  intersects $\pi_*^{-1}(\Gamma)$ transversally in at most in one point. Moreover, $\pi_*^{-1}(\Gamma)$ can't intersect two $D_j$ with selfintersection $-2$, since this would produce, after  contracting $\pi_*^{-1}(\Gamma)$, two intersecting  $(-1)$ - curves which is impossible on a surface of general type.

Therefore, if $a = n-1$, $\pi_*^{-1}(\Gamma)$ intersects the whole Hirzebruch-Jung string in at most one
point. This shows part i).

In general,
\begin{multline}
  \# \nu^{-1}(x) \leq \pi_*^{-1}(\Gamma) (\sum D_i) = \\
 = \pi_*^{-1}(\Gamma) (\sum_{\{b_i \geq 4\}}  D_i) + \pi_*^{-1}(\Gamma) (\sum_{\{b_i = 3\}}  D_i) + \pi_*^{-1}(\Gamma) (\sum_{\{b_i =2\}}  D_i) \leq \\
\leq \sum_{\{b_i \geq 4\}} (b_i-3) + \# \{i : b_i = 3\} +1.
\end{multline}
It remains to show that, for $a \neq n-1$, the above inequality cannot be an equality.

In fact, if equality holds, there is an $i$ such that $D_i'$ is a $(-1)$ - curve and
$\forall j \neq i$ we have 
\begin{itemize}
 \item $KD_j'=0$, $D'_j$ is smooth, or
\item $KD_j'=1$, $D'_j$ is singular.
\end{itemize}
$D'_i$ cannot intersect any singular $D'_j$, otherwise the surface obtained after contracting $D'_i$ would violate remark \ref{smrat}. With the same argument we see that $D'_i$ intersects at most one of the smooth $D'_j$. 

In fact, if $r>1$, $D'_i$ intersects exactly one smooth $D'_j$, because a Hirzebruch-Jung string is connected. After the contraction of $D_i'$, $D'_j$ becomes negative with respect to $K$, whence it can be contracted. Recursively, we contract all curves. It follows
that the dual graph of the union of $\pi_*^{-1}(\Gamma)$ with the Hirzebruch-Jung string of the singularity is a tree. By the connectedness of the Hirzebruch-Jung string, $\pi_*^{-1}(\Gamma) (\sum D_i)=1$. Therefore $\forall i$, $b_i=2$ which is equivalent to $a=n-1$.
\end{proof}

The following is an immediate consequence of the above.
\begin{cor}\label{nu}
With the same hypotheses as in prop. \ref{countingpoints} we have:
\begin{itemize}
 \item[i)] $\# \nu^{-1}(\frac{1}{n}(1,1)) \leq \max(1,n-3)$;
\item[ii)] $\# \nu^{-1}(\frac{1}{n}(1,a)) \leq 1$, for $n\leq 7$, $a \neq 1$.
\end{itemize}
\end{cor}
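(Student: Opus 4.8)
The plan is to read both bounds off Proposition~\ref{countingpoints} after identifying, in each case, the Hirzebruch--Jung continued fraction $\frac na=[b_1,\dots,b_r]$ and deciding whether the singularity falls under clause i) ($a=n-1$) or clause ii) ($a\neq n-1$) of that proposition.

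For part i) the singularity is $\frac1n(1,1)$, so $\frac na=\frac n1=n=[n]$ is a single block with $r=1$ and $b_1=n$. I would distinguish three ranges of $n$. If $n=2$ then $a=1=n-1$, and clause i) of Proposition~\ref{countingpoints} gives $\#\nu^{-1}(x)\leq 1=\max(1,n-3)$. If $n=3$ then $a\neq n-1$ and clause ii) applies; here $b_1=3$, so the bound is $\#\{i:b_i=3\}=1=\max(1,0)$. If $n\geq 4$ then again $a\neq n-1$, clause ii) applies with $b_1=n\geq4$, and the bound is $b_1-3=n-3=\max(1,n-3)$. These three subcases give part i).

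For part ii) the case $a=n-1$ is settled at once by clause i) of Proposition~\ref{countingpoints}, which bounds $\#\nu^{-1}(x)$ by $1$. For the remaining types with $n\leq7$, $a\neq1$ and $a\neq n-1$, I would first use the identification $\frac1n(1,a)=\frac1n(1,a^{-1})$ of Remark~\ref{ambiguity} to reduce to the genuinely distinct singularities, which are exactly $\frac15(1,2)$ (with $2^{-1}\equiv3$), $\frac17(1,2)$ (with $2^{-1}\equiv4$) and $\frac17(1,3)$ (with $3^{-1}\equiv5$). Computing their continued fractions gives $\frac52=[3,2]$, $\frac72=[4,2]$ and $\frac73=[3,2,2]$ respectively, so the bound $\sum_{\{b_i\geq4\}}(b_i-3)+\#\{i:b_i=3\}$ of clause ii) equals $1$ in each case. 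Hence $\#\nu^{-1}(x)\leq1$ for every singularity with $n\leq7$ and $a\neq1$, proving part ii).

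Since the corollary is a direct specialization of Proposition~\ref{countingpoints}, there is no real obstacle; the only step demanding care is the combinatorial bookkeeping, namely confirming that the three types above form a complete list of the cases with $n\leq7$ not already covered by the $a=n-1$ clause, and that each continued fraction has been expanded correctly so that no $b_i\geq4$ or extra $b_i=3$ is overlooked.
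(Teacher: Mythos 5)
Your proof is correct and is exactly the verification the paper leaves implicit: the paper states Corollary~\ref{nu} as an ``immediate consequence'' of Proposition~\ref{countingpoints}, and your case analysis (continued fraction $[n]$ for part i), and the complete list $\frac15(1,2)$, $\frac17(1,2)$, $\frac17(1,3)$ up to the $a\leftrightarrow a^{-1}$ identification for part ii)) is precisely that specialization, carried out without error.
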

 One possible definition of a rational double point (RDP for short) is the following. For more details we refer to \cite{bpv}.
\begin{definition}
A {\em rational double point} is a singular point of a surface, such that all the exceptional curves of the minimal resolution of it have selfintersection $-2$.
\end{definition}

\begin{prop}\label{baskets}
 Assume that $S$ is a product-quotient surface of general type and assume that the basket of singularities of the quotient model $X$ is one of the following:
\begin{itemize}
 \item[1)] $\{\frac{1}{n}(1,a),\frac{1}{n}(1,n-a)\}$ with either $n \leq 4$ or $n\leq 7$, $1 \neq a< \frac{n}{2}$;
 \item[2)] at most one point $\frac{1}{n}(1,a)$ with either $n\leq 4$ or $n\leq 7$, $a \neq 1$, and RDPs;
 \item[3)] $\{2 \times \frac{1}{3}(1,1)+RDPs\}$, $\{\frac{1}{5}(1,1),\frac{1}{5}(1,4)\}$.
\end{itemize}
Then $S$ is minimal.
\end{prop}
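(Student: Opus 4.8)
The plan is to argue by contradiction. Suppose $S$ is not minimal and let $E$ be a $(-1)$-curve on it. Since every $\pi$-exceptional curve sits in a Hirzebruch--Jung string and has self-intersection $\le -2$, the curve $E$ is not $\pi$-exceptional; being irreducible and rational, it is the strict transform $E=\pi_*^{-1}(\Gamma)$ of an irreducible rational curve $\Gamma\subset X$. I would then feed $\Gamma$ into diagram (\ref{normalization}): writing $\lambda_{12}^*\Gamma=\sum_i\Gamma_i$ and fixing a component $\Gamma_1$ with stabiliser $H\le G$, one gets the $H$-Galois cover $\gamma\colon\tilde\Gamma_1\to\tilde\Gamma_1/H\cong\PP^1$, while $\nu=\pi|_E\colon\PP^1\to\Gamma$ is the normalisation.

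The heart of the proof is a genus estimate for $\tilde\Gamma_1$. Being the normalisation of a curve inside $C_1\times C_2$, the curve $\tilde\Gamma_1$ maps non-constantly to at least one factor $C_i$, and since $g(C_i)\ge 2$ this forces $g(\tilde\Gamma_1)\ge 2$. Applying Riemann--Hurwitz to the Galois cover $\gamma$ of degree $d=|H|$, whose ramification over any branch point $p$ is uniform of some index $m_p\ge 2$, I obtain
\[
0<\frac{2g(\tilde\Gamma_1)-2}{d}=-2+\sum_p\Bigl(1-\frac{1}{m_p}\Bigr),
\]
the sum being taken over the branch points of $\gamma$. Thus $\sum_p(1-1/m_p)>2$. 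By Lemma \ref{branchpoints} each such $p$ satisfies $\nu(p)\in\Sing X$, say of type $\frac1n(1,a)$, with $m_p\mid n$; hence $1-1/m_p\le 1-1/n$. The whole strategy is therefore to establish, for each basket in the statement, the reverse inequality $\sum_p(1-1/m_p)\le 2$, which yields the desired contradiction.

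To bound the left-hand side I would invoke Proposition \ref{countingpoints} and Corollary \ref{nu}, which control $\#\nu^{-1}(x)$, i.e. the number of branches of $\Gamma$ (hence of branch points of $\gamma$) over a fixed singular point $x$. In case 1) the two conjugate points both have order $n$, and each is met at most once by Corollary \ref{nu} (using $\max(1,n-3)=1$ for a $\frac1n(1,1)$-point since then $n\le 4$, and part (ii), valid because $n\le 7$, for every point whose parameter is $\neq 1$), whence $\sum_p(1-1/m_p)\le 2(1-1/n)<2$. For the rational double points allowed in cases 2) and 3) I would use that a cyclic quotient RDP is necessarily of type $A_{n-1}=\frac1n(1,n-1)$, so Proposition \ref{countingpoints}(i) forces $\#\nu^{-1}\le 1$; combined with the single ``bad'' point (again met at most once by Corollary \ref{nu}) the intended conclusion is once more $\sum_p(1-1/m_p)\le 2$.

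The step I expect to be genuinely hard is the borderline baskets, exemplified by $\{\frac15(1,1),\frac15(1,4)\}$ in case 3). There the crude count allows $\#\nu^{-1}(\frac15(1,1))=2$ and $\#\nu^{-1}(\frac15(1,4))=1$, and since $m_p\mid 5$ forces $m_p=5$, this only gives $\sum_p(1-1/m_p)\le \frac{12}{5}>2$, so Riemann--Hurwitz alone does not close the argument. Here I would return to the intersection-theoretic core of Proposition \ref{countingpoints}, tracking not merely the number of branches but the incidences $d_i=D_i\cdot E$ with the Hirzebruch--Jung string and the behaviour of the $D_i$ under the contraction of $E$, in order to show that fewer of the points of $\nu^{-1}(x)$ can be genuine branch points of $\gamma$ (or that their indices must drop), thereby forcing $\sum_p(1-1/m_p)\le 2$; the same refinement is what must tame a potentially large number of RDPs in case 2). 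Exploiting the product-quotient geometry — the embedding $\tilde\Gamma_1\hookrightarrow C_1\times C_2$ and the uniform branching of the Galois cover $\gamma$ — is the ingredient I expect to be decisive in these extremal cases.
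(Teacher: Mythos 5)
Your skeleton is the same as the paper's: assume a $(-1)$-curve $E$ exists, set $\Gamma=\pi_*(E)\subset X$, pass to the Galois cover $\gamma\colon\tilde\Gamma_1\to\PP^1$ of diagram (\ref{normalization}), note $g(\tilde\Gamma_1)\ge 2$, and play Riemann--Hurwitz against the bounds of Lemma \ref{branchpoints}, Proposition \ref{countingpoints} and Corollary \ref{nu}. This correctly disposes of case 1), exactly as in the paper (at most two critical values forces $g(\tilde\Gamma_1)\le 1$, a contradiction). But for cases 2) and 3) --- which are the substance of the proposition --- what you offer is a plan rather than a proof, and the plan as stated cannot be completed.

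In case 2) the issue is not a refinement of the per-point bound $\#\nu^{-1}(\mathrm{RDP})\le 1$: if $\Gamma$ meets several rational double points, each only once, your sum easily exceeds $2$ (three $A_1$ points met with index $2$ plus one $\frac{1}{7}(1,2)$ point give $\frac32+\frac67>2$). What closes this case in the paper is a fact already contained in the proof of Proposition \ref{countingpoints}: $E$ cannot meet two distinct $(-2)$-curves \emph{at all}, across different singular points, because contracting $E$ would create two intersecting $(-1)$-curves on a surface of general type. Hence $\Gamma$ passes through at most one RDP, smoothly, and only then does the two-critical-value argument apply; you never invoke this global constraint. In case 3) the situation is worse for your strategy: the extremal configurations genuinely survive all branch-point counting. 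For $\left\{\frac{1}{5}(1,1),\frac{1}{5}(1,4)\right\}$, Corollary \ref{nu} permits two branches over the first point and one over the second, all of index $5$, giving $\sum_p(1-1/m_p)=\frac{12}{5}>2$; this is perfectly consistent with Riemann--Hurwitz, so no sharpening of the count to force $\sum_p(1-1/m_p)\le 2$ is available, and the paper does not attempt one. Instead it writes down the dual graph of $E$ together with the exceptional curves ($E$ meeting the $(-5)$-curve twice and the $A_4$-chain once), contracts $E$ and the whole $A_4$-string, and observes that the image of the $(-5)$-curve is then a singular curve violating Remark \ref{smrat}; similarly, for $\left\{2\times\frac{1}{3}(1,1)+\mathrm{RDPs}\right\}$ one shows $\gamma$ has exactly three branch points, obtains the configuration $(-1)$ meeting $(-3)$, $(-3)$, $(-2)$, and contracts the $(-1)$- and $(-2)$-curves to produce two intersecting $(-1)$-curves. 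These contraction arguments on explicit configurations --- whose conclusions are of a different nature than the inequality you set out to prove --- are the decisive missing ingredient; your proposal gestures at them (``tracking the incidences $d_i$'') but does not carry them out.
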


\begin{proof}
 Assume by contradiction that $S$ contains a $(-1)$-curve $E$. Then we can apply prop. \ref{countingpoints} to $\Gamma:= \pi_*(E) \subset X$.

1) In this case, by cor. \ref{nu} and lemma \ref{branchpoints}, $\gamma$ (cf. diagram \ref{normalization}) has at most two critical values, corresponding to the  singular points of $X$. Therefore $\Gamma_1$ is rational, a contradiction.

2) Note that $E$ cannot intersect two distinct $(-2)$ - curves.
In particular, $\Gamma$ can pass through  at most one rational double point, and has to be smooth in this point.

 Therefore, this case is excluded by the same argument as above.

3) We have to treat each basket separately.

\noindent
\underline{$\{2 \times \frac{1}{3}(1,1)+RDPs\}$}: by corollary \ref{nu} $\gamma$ has at most $3$ branch
points, whence by the above argument it has exactly $3$.
Therefore $\Gamma$ passes through both triple points and through one rational double point of $X$: we have found a configuration of
rational curves on $S$ whose dual graph is

\begin{equation}\label{2x13+rdp}
\xymatrix{
&*+[o][F]{-1}\ar@{-}[d]\ar@{-}[dl]\ar@{-}[dr]&\\
*+[o][F]{-3}&*+[o][F]{-3}&*+[o][F]{-2}\\
}
\end{equation}

\medskip
which cannot occur on a surface of general type because, after contracting the $(-1)$- and the $(-2)$-curve, one gets two intersecting (even tangentially) $(-1)$-curves.

\noindent
\underline{$\{\frac{1}{5}(1,1),\frac{1}{5}(1,4)\}$}: We  get
\begin{equation}\label{15+14}
\xymatrix{
&*+[o][F]{-1}\ar@{=}[dl]\ar@{-}[dr]&\\
*+[o][F]{-5}&&*+[o][F]{-2}\ar@{-}[d]\\
&&*+[o][F]{-2}\ar@{-}[d]\\
&&*+[o][F]{-2}\ar@{-}[d]\\
&&*+[o][F]{-2}\\
}
\end{equation}

Contracting $E$ and the whole H-J string coming from the singularity $\frac{1}{5}(1,4)$, the image of the $(-5)$-curve violates rem. \ref{smrat}.
\end{proof}

%
%

\begin{theo}\label{minimal}
The minimal product-quotient surfaces of general type with $p_g=0$ form $72$ families which are
listed in tables \ref{K2>4} and \ref{K2<3}, and described in appendix \ref{details}.
\end{theo}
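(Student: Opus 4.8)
The plan is to combine the exhaustiveness of the classification algorithm of Sections \ref{algorithm} and \ref{exceptional} with the minimality criterion of Proposition \ref{baskets}, so that the statement reduces to a finite, case-by-case inspection of the baskets produced by the algorithm. First I would pin down the range of $K^2$: by Remark \ref{general} a minimal surface of general type with $p_g=0$ has $\chi(\hol_S)=1$ and $K_S^2\geq1$, while Corollary \ref{k2basket} gives $K_S^2=8-\frac13 B(\mathcal{B})\leq8$. Hence every minimal product-quotient surface of general type with $p_g=0$ has $1\leq K_S^2\leq8$, and it suffices to collect the output of the algorithm over this range. The case $K_S^2=8$, where the quotient model is smooth and $S$ is a surface isogenous to a product, is already classified in \cite{pg=q=0}, and those surfaces are minimal. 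For $K^2\in\{1,\dots,7\}$ I would invoke that {\it ListGroups}, {\it ExistingSurfaces} and {\it FindSurfaces} produce, up to the equivalence of Step 5, every product-quotient surface with that value of $K_S^2$ and $p_g=0$ together with its basket, while the finitely many triples skipped by {\it ListGroups} and collected in Table \ref{tocheck} are ruled out by Propositions \ref{237}, \ref{1152+2016}, \ref{1536} and \ref{768}. Thus the list is exhaustive and consists of exactly $73$ families: the $72$ of Tables \ref{K2>4} and \ref{K2<3} together with one more.

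Next, for each of the $72$ tabulated families I would read the basket $\mathcal{B}(X)$ off the column {\it Sing X} and verify that it satisfies one of the hypotheses of Proposition \ref{baskets}, using that $\frac1n(1,n-1)$ (in particular the node $\frac12(1,1)$ and the points $\frac13(1,2)$, $\frac14(1,3)$) is a rational double point, whereas $\frac1n(1,1)$ with $n\geq3$ and $\frac15(1,2)$ are not. The baskets of the shape $\{\frac1n(1,a),\frac1n(1,n-a)\}$ with $n\leq7$, such as $\{\frac13(1,1),\frac13(1,2)\}$ and $\{2\times\frac15(1,2)\}=\{\frac15(1,2),\frac15(1,3)\}$, fall under case 1); the baskets in which every point but at most one is a rational double point and the exceptional point has small index, such as the pure-node baskets $\{\lambda\times\frac12(1,1)\}$ and the baskets $\{\frac13(1,1),2\times\frac12(1,1),\frac13(1,2)\}$, $\{\frac14(1,1),2\times\frac12(1,1),\frac14(1,3)\}$, $\{\frac13(1,1),4\times\frac12(1,1),\frac13(1,2)\}$, fall under case 2); and the two remaining baskets $\{2\times\frac13(1,1),2\times\frac13(1,2)\}$ and $\{\frac15(1,1),\frac15(1,4)\}$ are precisely case 3). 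In every instance Proposition \ref{baskets} yields minimality, so all $72$ tabulated families are minimal (the empty basket at $K_S^2=8$ being covered by the minimality of surfaces isogenous to a product).

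Finally, the one extra family returned by the algorithm is the fake Godeaux surface, whose basket does not satisfy any hypothesis of Proposition \ref{baskets}. I would defer it to Section \ref{fakegodeaux}, where it is shown to carry a pair of $(-1)$-curves and hence to be non-minimal; this identifies it as the unique non-minimal member of the complete list and leaves exactly the $72$ families of the tables. The genuinely hard step is this last one: once exhaustiveness is granted, the minimality of the $72$ listed families is a routine verification against Proposition \ref{baskets}, whereas deciding non-minimality in the exceptional case is delicate, requiring the explicit construction of singular $G$-invariant correspondences between $C_1$ and $C_2$ whose strict transforms on $S$ are the two $(-1)$-curves, which is the substance of Section \ref{fakegodeaux}.
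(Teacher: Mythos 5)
Your proposal is correct, and its two main pillars (exhaustiveness of the algorithm plus the exceptional-case propositions, then a basket-by-basket check against Proposition \ref{baskets}) coincide with the paper's proof; your case analysis of which baskets fall under which clause of Proposition \ref{baskets} is accurate, including the point that $\frac15(1,2)=\frac15(1,3)$ puts $\{2\times\frac15(1,2)\}$ under case 1. The one place where you genuinely diverge is the non-minimality of the fake Godeaux surface. You defer this to the explicit construction of the two $(-1)$-curves in Section \ref{fakegodeaux}; this is logically sound (there is no circularity, since Propositions \ref{E'} and its companion rely only on Remark \ref{smrat} and direct intersection computations, not on Theorem \ref{minimal}), but it imports the hardest geometric work of the paper into the proof of this theorem. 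The paper instead disposes of the fake Godeaux in one line: the algorithm computes $\pi_1(S)=\ZZ/6\ZZ$ with $K_S^2=1$, and by Reid's theorem (\cite{tokyo}) a minimal surface of general type with $p_g=0$ and $K^2=1$ has torsion group cyclic of order at most $5$, so $S$ cannot be minimal. This keeps Theorem \ref{minimal} independent of Section \ref{fakegodeaux}, whose delicate construction of $G$-invariant correspondences is then needed only to identify the minimal model and show it has $K^2=3$ (Theorem \ref{classiso}, part 3), not to establish non-minimality itself.
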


\begin{proof}
The case $K^2=8$ has been already classified in \cite{bacat}, \cite{pg=q=0}.

Running our program for $K^2 \in \{7,6,5,4,3,2,1\}$ we have found the surfaces listed in tables
\ref{K2>4} and \ref{K2<3} and one more surface, which we called "the fake Godeaux surface", having  $K^2_S=1$ and
$\pi_1(S)=\ZZ/6\ZZ$ (hence cannot be minimal, cf. \cite{tokyo}).

All the other surfaces are minimal by comparing the baskets appearing in tables \ref{K2>4} and \ref{K2<3} with proposition \ref{baskets} (remembering that $\frac{1}{5}(1,2) = \frac{1}{5}(1,3)$).
\end{proof}

\section{The fake Godeaux surface}\label{fakegodeaux}
Our program produces $73$ families of product-quotient surfaces
of general type with $p_g=0$ and $K^2>0$, and theorem \ref{minimal}shows that $72$ of them are families of minimal surfaces.

The 73rd output in the form of tables \ref{K2>4} and \ref{K2<3} is the following

\begin{table}[hb]
\caption{}
\label{fgodeaux}
\renewcommand{\arraystretch}{1,3}
 \begin{tabular}{|c|c|c|c|c|c|c|c|}
\hline
$K^2_S$&Sing X&$t_1$&$t_2$&$G$&N&$H_1(S,{\mathbb Z})$&$\pi_1(S)$\\
\hline\hline
   1&$1/7, 2/7^2$ &$3^2, 7$&$2,4,7$& $PSL(2,7)$ &1&    ${\mathbb
Z}_6$ &   ${\mathbb Z}_6$ \\
\hline
\end{tabular}
\end{table}

More precisely, the computer gives exactly one pair of appropriate orbifold homomorphisms, which is the following.

We see $G=PSL(2,7)$ as subgroup of ${\mathfrak S}_8$
generated by $(367)(458),(182)(456)$. Then (note that $\BT(a,b,c) \cong \BT(c,b,a)$)
\begin{flalign*}
\varphi_1 \colon \BT(7,3,3) & \ra G,&\varphi_2 \colon \BT(7,4,2)& \ra G\\
c_1& \mapsto  (1824375)&c_1 & \mapsto (1658327)\\
c_2& \mapsto  (136)(284)&c_2 & \mapsto (1478)(2653)\\
c_3& \mapsto  (164)(357)&c_3 & \mapsto (15)(23)(36)(47).
\end{flalign*}

As explained in the introduction, choosing three points $p_1,p_2,p_3 \in \PP^1$ (as branch points of $\lambda_1$), three simple loops $\gamma_i$ around them with $\gamma_1\gamma_2\gamma_3 = 1$, $\varphi_1$ determines the monodromy homomorphism and then $C_1$ and $\lambda_1 \colon C_1 \ra C_1/G \cong \PP^1$. Since the covering is determined by the kernel of the monodromy homomorphism, $C_1$ and $\lambda_1$ do not depend on the choice of the loops. 

Since $\Aut(\PP^1)$ is 3-transitive, a different choice of the three branch points will give rise to an isomorphic covering.

Therefore in our situation (up to isomorphism) $C_1$ and $\lambda_1$ are unique.
The same holds $C_2$ and $\lambda_2$. Hence the pair $(\varphi_1, \varphi_2)$ above determines exactly one product-quotient surface $S$, which we have called "the fake Godeaux surface".

Note that by remark \ref{enriques} $S$ is a surface of general type.

This section is devoted to the proof of the following
\begin{theo}
The fake Godeaux surface $S$ has two $(-1)$-curves. Its minimal model has $K^2=3$.
\end{theo}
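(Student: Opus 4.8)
The plan is to carry out the construction announced in the introduction: exhibit two $G$-invariant correspondences whose images in $X$ lift to $(-1)$-curves on $S$, contract them, and apply Proposition \ref{baskets} to the result. First I would record why $S$ is not minimal, so that the search for $(-1)$-curves is justified a priori. A minimal surface of general type with $p_g=q=0$ and $K^2=1$ is a numerical Godeaux surface, and its torsion group has order at most $5$ (cf. \cite{tokyo}); since $H_1(S,\ZZ)=\ZZ/6\ZZ$, the surface $S$ is non-minimal. Thus its minimal model $M$ has $K_M^2=1+\rho$ with $\rho\ge 1$ the number of contracted $(-1)$-curves, and the assertion $K_M^2=3$ is exactly the statement that $\rho=2$, realized by two disjoint $(-1)$-curves.

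To produce the curves I would work directly with the explicit monodromies $\varphi_1,\varphi_2$ through diagram \ref{diagram1}. The natural source of a $G$-invariant correspondence is the preimage under $\lambda$ of a rational curve in $\PP^1\times\PP^1$: fixing an identification $C_1/G\cong\PP^1\cong C_2/G$ which matches the branch loci in a suitable way (both coverings carry the order-$7$ branch datum coming from $c_1$), the curve $\lambda^{-1}(\Delta)$, for $\Delta$ a well-chosen diagonal --- equivalently the image in $X$ of the fibre product $C_1\times_{\PP^1}C_2$ --- is $G$-invariant by construction. Its irreducible components are $G$-invariant correspondences, and using the explicit permutation form of $\varphi_1,\varphi_2$ I would single out two components $\Gamma_1,\Gamma_2\subset X$ meeting the three singular points $\frac17(1,1)$ and $2\times\frac17(1,2)$ in the required incidence pattern.

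For each $\Gamma_i$ I would then run the normalization diagram (\ref{normalization}): writing $\lambda_{12}^{*}\Gamma_i=\sum_j\Gamma_i^{(j)}$ and letting $H\le G$ be the stabiliser of a component, Lemma \ref{branchpoints} shows that the quotient map $\gamma$ is branched only over the singular points of $X$ lying on $\Gamma_i$, while the bounds of Corollary \ref{nu} (here $\#\nu^{-1}(\frac17(1,1))\le 4$ and $\#\nu^{-1}(\frac17(1,2))\le 1$) restrict the ramification so tightly that $\gamma$ is forced to be a cover of $\PP^1$ by $\PP^1$; hence $\Gamma_i$ is rational and $E_i:=\pi_*^{-1}(\Gamma_i)$ is a smooth rational curve by remark \ref{smrat}. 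To see that $E_i^2=-1$ I would compute $K_S\cdot E_i$ from $K_S=\pi^{*}K_X+\sum_\ell a_\ell D_\ell$, where the discrepancies are $a=-\frac57$ over $\frac17(1,1)$ and $a=-\frac47,-\frac27$ over each $\frac17(1,2)$, combined with the intersection numbers $E_i\cdot D_\ell$ read off the configuration via Proposition \ref{countingpoints}, and then invoke adjunction $E_i^2+K_S\cdot E_i=-2$.

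Finally, having checked from the same explicit data that $E_1$ and $E_2$ are disjoint, contracting both yields $\pi'\colon S\to S'$ with $K_{S'}^2=K_S^2+2=3$. To conclude that $S'$ is the minimal model it remains to rule out further $(-1)$-curves: after contracting $E_1,E_2$ the transformed Hirzebruch-Jung curves are the exceptional locus of the minimal resolution of a quotient model $X'$ whose basket of singularities falls under one of the cases of Proposition \ref{baskets}, so that $S'$ is minimal and $K_M^2=K_{S'}^2=3$. The delicate point throughout is the construction itself: since Riemann's existence theorem yields $C_1,C_2$ only through the kernel of the monodromy, extracting an actual $G$-invariant curve and determining precisely which singular points it meets, and with what local multiplicities, is the genuinely non-constructive step. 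Once this incidence is pinned down, the computation of $E_i^2$, the disjointness of $E_1,E_2$, and the appeal to Proposition \ref{baskets} are routine intersection-theoretic verifications.
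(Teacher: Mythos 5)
Your skeleton (exhibit two disjoint $(-1)$-curves, contract, rule out further contractions) matches the paper's, and your preliminary observation that $H_1(S,\ZZ)=\ZZ/6\ZZ$ forces non-minimality via the torsion bound for numerical Godeaux surfaces is exactly the paper's remark. But the central construction you propose does not work, and it is precisely the hard part of the theorem. The preimage $\lambda^{-1}(\Delta)$ of a diagonal, equivalently the image of $C_1\times_{\PP^1}C_2$, has no rational components: since $PSL(2,7)$ is simple and $\varphi_1$, $\varphi_2$ kill loops around different branch points, Goursat's lemma forces the image of $(\varphi_1,\varphi_2)$ in $G\times G$ to be all of $G\times G$, so $C_1\times_{\PP^1}C_2$ is irreducible and $\lambda^{-1}(\Delta)$ is an irreducible degree-$168$ cover of $\Delta$; a Riemann--Hurwitz count with the monodromies of $t_1=(7,3,3)$, $t_2=(7,4,2)$ (even after matching the two order-$7$ branch points on the diagonal, which still leaves five critical values) gives it genus far greater than zero. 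Moreover, your argument that Lemma \ref{branchpoints} and Corollary \ref{nu} "force $\gamma$ to be a cover of $\PP^1$ by $\PP^1$" is circular: Lemma \ref{branchpoints} is stated for a curve $\Gamma\subset X$ \emph{assumed} rational, and Proposition \ref{countingpoints}/Corollary \ref{nu} assume in addition that $\pi_*^{-1}(\Gamma)$ \emph{is} a $(-1)$-curve; these are necessary conditions used to exclude $(-1)$-curves, never sufficient criteria to produce one or to prove a candidate curve rational. The paper gets rationality for free, by construction: it replaces the diagonal by rational curves of higher bidegree, images of $(\xi',\eta')$ and $(\xi'',\eta'')$ with auxiliary cyclic covers $\xi'(t)=t^3$, $\eta'$ of degree $4$, etc., and builds normalized fibre products $\hat{C}'_1$, $\hat{C}'_2$ which are $G$-covers of $\PP^1$; the crux (Lemmas \ref{conjugated} and \ref{conjugated2}, proved by a conjugacy-class and monodromy analysis together with a MAGMA enumeration of appropriate orbifold homomorphisms $\BT(7,7,7)\to PSL(2,7)$ up to Hurwitz equivalence) is that the two constructions yield one and the same curve $\hat{C}'$, mapping $G$-equivariantly to both factors. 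Then $D'=C'/G$ is rational because $\hat{C}'/G\cong\PP^1$ by construction, and the incidence with $\Sing X$ and the value $K_X\cdot D'$ (hence $K_S\cdot E'=-1$) are read off the explicit local behaviour of $\xi',\eta'$ --- data your proposal has no way of supplying.

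There is also a gap at the final step. After contracting $E'$ and $E''$, the surface $S'$ is not (a priori) a product-quotient surface: there is no "quotient model $X'$" whose resolution is $S'$, and the self-intersections of the images of the Hirzebruch--Jung curves have changed, so Proposition \ref{baskets} cannot be invoked for $S'$. The paper instead argues directly: a hypothetical $(-1)$-curve $E'''$ on $S'$ is transported back to $S$ and pushed down to the original $X$, where the constraints of Proposition \ref{countingpoints} applied to the resulting configuration show that the associated cover $\gamma$ of diagram (\ref{normalization}) has at most two critical values, forcing a component of its preimage in $C_1\times C_2$ to be rational --- impossible, since such a component dominates a curve of genus at least $2$. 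Your adjunction/discrepancy bookkeeping (discrepancy $-\frac57$ over $\frac17(1,1)$ and $-\frac47,-\frac27$ over each $\frac17(1,2)$) is correct as far as it goes, but without the actual curves and their intersection numbers it computes nothing.
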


We first construct two $(-1)-$curves on $S$.

\subsection{The rational curve $E'$}

We can choose the branch points $p_i$ of $\lambda_1$ and $p'_j$ of $\lambda_2$ at our convenience. We set $(p_1, p_2, p_3)=(1,0,\infty)$, $(p'_1, p'_2, p'_3)=(0,\infty,-\frac{9}{16})$.

Consider the normalization $\hat{C}'_1$ of the fibre product between
$\lambda_1$ and the $\ZZ/3\ZZ$-cover $\xi' \colon \PP^1 \rightarrow \PP^1$
defined by $\xi'(t)=t^3$. We have a diagram
\begin{equation*}
\xymatrix{
\hat{C}'_1\ar^{\hat{\xi}'}[r]\ar^{\hat{\lambda}'_1}[d]&C_1\ar^{\lambda_1}[d]\\
\PP^1\ar^{\xi'}[r]&\PP^1 \\
}
\end{equation*}
where the horizontal maps are $\ZZ/3\ZZ$-covers and the vertical maps
are $PSL(2,7)$-covers. Note that $\xi'$ branches on $p_2$, $p_3$ which have branching index 3 for $\lambda_1$:
it follows that $\hat{\xi}'$ is \'etale.

The branch points of $\hat{\lambda}'_1$ are the three points in $\xi'^{-1}(p_1)$, all with branching index 7.

For $C_2$, we take the normalized fibre product between $\lambda_2$ and the map $\eta' \colon
\PP^1 \rightarrow \PP^1$ defined by $\eta'(t)=\frac{(t^3-1)(t-1)}{(t+1)^4}$.

Note that $\eta'$ has degree $4$ and factors
through the involution $t \mapsto \frac1t$. Therefore it is the
composition of two double covers, say $\eta'=\eta_1' \circ \eta_2'$.
We get the following diagram

\begin{equation*}
\xymatrix{
\hat{C}'_2\ar^{\hat{\eta}'}@/^1.7pc/[rr]\ar^{\hat{\eta}'_2}[r]\ar^{\hat{\lambda}'_2}[d]&\bar{C}_2'\ar[d]^{\bar{\lambda}_2'}\ar^{\hat{\eta}'_1}[r]&C_2\ar^{\lambda_2}[d]\\
\PP^1\ar_{\eta'}@/_1pc/[rr]\ar^{\eta'_2}[r]&\PP^1\ar^{\eta'_1}[r]&\PP^1\\
}
\end{equation*}
where the horizontal maps are $\ZZ/2\ZZ$-covers and the vertical maps
are $PSL(2,7)$-covers. 

A straightforward computation shows that $\eta'_1$ branches only on $p'_2$, $p'_3$ and
therefore $\hat{\eta}'_1$ is \'etale. 

The branch points of $\bar{\lambda}'_2$ are the two points in $(\eta'_1)^{-1}(p'_1)$  with branching index 7, and the point $(\eta'_1)^{-1}(p'_2)$ with branching index 2.

A similar computation shows that the branch points of
$\eta'_2$ are $(\eta'_1)^{-1}(p'_2)$ and a point $q'\in (\eta'_1)^{-1}(p'_1)$: $\hat{\eta}'_2$
branches on the 24 points of $(\bar{\lambda}'_2)^{-1}(q')$.

The branch points of $\hat{\lambda}'_2$ are the three points of $(\eta')^{-1}(p'_1)$, each with branching index 7.

\begin{lemma}\label{conjugated}
$(\hat{C}'_1,\hat{\lambda}'_1)$ and $(\hat{C}'_2,\hat{\lambda}'_2)$ are isomorphic as Galois covers of $\PP^1$.
\end{lemma}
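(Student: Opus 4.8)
\emph{Plan.} The strategy is to recognize both $\hat\lambda'_1$ and $\hat\lambda'_2$ as $G$-Galois covers of $\PP^1$ of the same type, and then to match the two orbifold homomorphisms they determine. By the branch-point analysis preceding the statement, each of $\hat\lambda'_1\colon \hat C'_1 \to \PP^1$ and $\hat\lambda'_2\colon \hat C'_2 \to \PP^1$ is a $G$-cover branched over exactly three points, all of branching index $7$; in particular both have signature $(7,7,7)$, and a Hurwitz computation gives genus $49$ for both curves. By Riemann's existence theorem a $G$-Galois cover of $\PP^1$ of this type is determined, up to isomorphism of Galois covers, by an appropriate orbifold homomorphism $\psi\colon \BT(7,7,7)\to G$, taken modulo the equivalence relation of \cite{bacat} (simultaneous conjugation, the braid action coming from moving the three branch points, and automorphisms of $G$); the position of the three branch points is irrelevant because $\Aut(\PP^1)$ is sharply $3$-transitive. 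Thus it suffices to produce $\psi_1,\psi_2\colon \BT(7,7,7)\to G$ describing $\hat\lambda'_1,\hat\lambda'_2$ and to check that they lie in the same equivalence class.

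First I would compute $\psi_1$. Since $\hat C'_1$ is the normalized fibre product of $\lambda_1$ with $\xi'(t)=t^3$, its monodromy is obtained by pulling back the loops $\gamma_i$ through $\xi'$: the map $\xi'$ is totally ramified of order $3$ over $p_2,p_3$ (whose branching indices for $\lambda_1$ are $3$, so these points become unramified for $\hat\lambda'_1$, consistent with $\hat\xi'$ being \'etale) and is unramified over $p_1$, so the three points of $\xi'^{-1}(p_1)$ carry local monodromies which are the conjugates of $\varphi_1(c_1)$ under the deck group of $\xi'$. Tracking how the three lifted loops compose then yields $\psi_1(c_1),\psi_1(c_2),\psi_1(c_3)$ as explicit order-$7$ elements of $G$ built from $\varphi_1(c_1)$ and $\varphi_1(c_2)$, with $\psi_1(c_1)\psi_1(c_2)\psi_1(c_3)=1$ and $\langle\psi_1(c_1),\psi_1(c_2),\psi_1(c_3)\rangle=G$.

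Next I would carry out the analogous computation for $\psi_2$ along the tower $\eta'=\eta'_1\circ\eta'_2$. Here the three branch points of $\hat\lambda'_2$ lie over $p'_1$, and their local monodromies are read off from $\varphi_2$ by following the loops through the two successive double covers $\eta'_1$ (\'etale on $C_2$, branched only over $p'_2,p'_3$) and $\eta'_2$, using that $\varphi_2(c_1)$ has order $7$, so that squaring it by the order-$2$ ramification occurring over $p'_1$ still produces an element of order $7$. This again gives a generating triple of order-$7$ elements with product $1$, now expressed through $\varphi_2$.

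Finally I would verify $\psi_1\sim\psi_2$. Since the order-$7$ elements of $PSL(2,7)$ form only two conjugacy classes and the generating triples with product $1$ fall into few Hurwitz/$\Aut(G)$-classes, this is a finite check, readily confirmed in MAGMA in the spirit of the scripts used elsewhere in the paper, and one exhibits explicitly the automorphism of $G$ and the braid moves carrying one triple to the other; such an equivalence of orbifold homomorphisms is precisely an isomorphism $\hat C'_1 \cong \hat C'_2$ of Galois covers. The main obstacle is the second monodromy computation: correctly threading the loops through the degree-$4$ map $\eta'$ with its mixed ramification is delicate, and one must compare the resulting triple with $\psi_1$ under the correct equivalence rather than mere simultaneous conjugacy. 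In particular the two covers arise from different base signatures $(7,3,3)$ and $(7,4,2)$, so $\psi_1$ and $\psi_2$ may well land in opposite classes of order-$7$ elements; the match will then require the outer automorphism of $PSL(2,7)$, which fuses these two classes, and overlooking this would make the triples appear inequivalent.
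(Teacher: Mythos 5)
Your overall strategy---reducing the lemma to a comparison of the orbifold homomorphisms $\BT(7,7,7)\to PSL(2,7)$ attached to the two pulled-back covers---is the same as the paper's, but there is a genuine gap in the equivalence relation you allow, and it sits exactly where your argument would go wrong. You take the homomorphisms modulo simultaneous conjugation, Hurwitz moves \emph{and all automorphisms of $G$}, and in your last paragraph you even plan to invoke the outer automorphism of $PSL(2,7)$ in case the two triples land in the two different conjugacy classes of order-$7$ elements. For this lemma that equivalence is too coarse. What the construction immediately following the lemma needs is an isomorphism $\hat{C}'_1\cong\hat{C}'_2$ that is \emph{$G$-equivariant with the identity on $G$}: only then do $\hat{\xi}'$ and $\hat{\eta}'$ become equivariant maps from one and the same curve $\hat{C}'$, and only then is $C'=(\hat{\xi}',\hat{\eta}')(\hat{C}')$ invariant under the \emph{diagonal} action of $G$ on $C_1\times C_2$. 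Such $G$-covers are classified by appropriate homomorphisms modulo \emph{inner} automorphisms and Hurwitz moves, which is precisely the equivalence the paper (and its script \emph{FindCurves}) uses. With that equivalence there are exactly two classes of type $(7,7,7)$, distinguished by which of the two conjugacy classes of order-$7$ elements the generators hit; this invariant is preserved by inner automorphisms and Hurwitz moves but destroyed by the outer automorphism. So if matching $\psi_1$ with $\psi_2$ ``required the outer automorphism,'' as you allow, the lemma in the sense needed would be \emph{false}, and the curve you construct would only be invariant for a twisted diagonal action, ruining the construction of $D'$ and $E'$.

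Fortunately no outer automorphism is needed, and this is exactly what the paper verifies---by an argument that also avoids the loop-tracking you identify as your main obstacle. Since the two inner-plus-Hurwitz classes are separated by the single invariant ``conjugacy class of the images of the generators,'' one never needs the full triples $\psi_1,\psi_2$: it suffices to pin down the conjugacy class of \emph{one} local monodromy of each cover. Because $\xi'$ is unramified over the three points of $\xi'^{-1}(p_1)$, the local monodromies of $\hat{\lambda}'_1$ there are conjugate in $G$ to $\varphi_1(c_1)=(1824375)$; because $\eta'$ is unramified at two of the three points of $(\eta')^{-1}(p'_1)$, two local monodromies of $\hat{\lambda}'_2$ are conjugate to $\varphi_2(c_1)=(1658327)$. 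These two elements are already conjugate in $PSL(2,7)$, so both covers lie in the same inner-plus-Hurwitz class and the lemma follows. As written, your proposal both rests on the wrong equivalence relation and leaves unexecuted the monodromy computation on which its conclusion would depend.
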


\begin{proof}
By construction they have the same group $G=PSL(2,7)$ and the same
branch points, the third roots of $1$, each with branching index
$7$.

We ask the computer for all appropriate orbifold homorphisms $\varphi \colon \BT(7,7,7) \rightarrow PSL(2,7)$ modulo automorphisms (i.e.,  inner automorphisms of $PSL(2,7)$ and  Hurwitz moves, cf. \cite{bacat}). The computer finds two possibilities, returned as the sequence $[\varphi(c_1),\varphi(c_2),\varphi(c_3)]$.
\begin{verbatim}
> FindCurves({* 7^^3 *}, PSL(2,7));
{
    [
        (1, 7, 8, 4, 6, 2, 3),
        (1, 5, 4, 6, 8, 7, 3),
        (1, 2, 6, 8, 4, 5, 3)
    ],
    [
        (1, 5, 8, 2, 3, 4, 6),
        (1, 6, 3, 4, 7, 5, 8),
        (1, 5, 7, 3, 4, 2, 8)
    ]
}
\end{verbatim}

There are two conjugacy classes of elements of order $7$ in $PSL(2,7)$. In both sequences the three entries belong to the same conjugacy class, whereas $(1784623)$ is not conjugate to $(1582346)$. 

We note the following elementary, but crucial fact: let $\varphi \colon \BT \ra G$ be an appropriate orbifold homomorphism such that all $\varphi(c_i)$ belong to the same conjugacy class $\mathcal{C}$. Let $\varphi'$ be an appropriate orbifold homomorphism which is equivalent to $\varphi$ under the equivalence relation generated by inner automorphisms of $G$ and by Hurwitz moves. Then $\varphi'(c_i) \in \mathcal{C}$ for every $i$.

We denote by $\hat{\varphi}_i$ an appropriate orbifold homomorphism associated to $\hat{\lambda}'_i$.

To prove the lemma it suffices now to show that there exist $i, j$ such that $\hat{\varphi}_1(c_i)$ is conjugate to $\hat{\varphi}_2(c_j)$.

By construction, the branch points of $\hat{\lambda}'_1$ are the three points in $\xi'^{-1}(p_1)$, and they are all regular points of $\xi'$. This implies that $\hat{\varphi}_1(c_i)$ is 
conjugate to $\varphi_1(c_1) = (1824375)$. 

Similarly, the branch points of $\hat{\lambda}'_2$ are the three points of $(\eta')^{-1}(p'_1)$, two of them are regular points of $\eta'$. This implies that
two of the $\hat{\varphi}_2(c_i)$ are
conjugate to $\varphi_2(c_1) = (1658327)$, which is conjugate to $(1824375)$.

\end{proof}

Consider the curve $\hat{C}':=\hat{C}'_1=\hat{C}'_2$. By Hurwitz' formula it is a smooth
curve of genus $1+ \frac{168}{2}(-2 + 3\frac67) = 49$ on which we have an action of $G=PSL(2,7)$, an
action of $\ZZ/3\ZZ$, and an action of $\ZZ/2\ZZ$ (given by $\hat{\eta}'_2$). Note that the last two commute with the first (in fact, these two generate
an action of $\mathfrak{S}_3$ on $\hat{C}'$, just look at the induced
action on $\hat{C}'/G =\PP^1$, and how they permute the third roots of
$1$, so we have an explicit faithful action of $PSL(2,7) \times
\mathfrak{S}_3$ on $\hat{C}'$).

We have then a divisor $C':=(\hat{\xi}',\hat{\eta}')(\hat{C}')\subset C_1
\times C_2$ which is $G-$invariant, and the quotient is a
rational curve $\hat{C}'/G \cong \PP^1 \stackrel{e'}{\rightarrow} D'$ contained in the quotient model $X$ of the fake Godeaux surface $S$.



\begin{prop}\label{E'}
$D'$ has an ordinary double point at the singular point
$\frac17(1,1)$, and contains one more singular
point of $X$.

Let $E'$ be the strict transform of
$D'$ on $S$,
let $E_7$ be the exceptional divisor over the singular point of type $\frac17(1,1)$,
$E_2$, $E_4$ be the exceptional divisors over the other singular point
contained in $D'$, with $E_d^2=-d$.

Then $E' E_7=2$, $E' E_4=1$, $E' E_2=0$ and $E'$ is numerically
equivalent to $\pi^*D'-\frac17(2E_7+E_2+2E_4)$. 

Moreover, $E'$ is a smooth rational curve
with selfintersection $-1$.
\end{prop}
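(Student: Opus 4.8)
The plan is to pull everything back to the normalization $\nu = e' \colon N \to D'$, where $N = \hat{C}'/G \cong \PP^1$, and to use the explicit rational parametrization this provides. Composing $\nu$ with $\lambda$ describes the image of $D'$ in $C_1/G \times C_2/G = \PP^1\times\PP^1$ as $t \mapsto (\xi'(t),\eta'(t)) = \left(t^3,\ \frac{(t^3-1)(t-1)}{(t+1)^4}\right)$. First I would locate $D' \cap \Sing X$. As in Proposition \ref{bijective}, a singular point over $(p_i,p_j')$ forces $|H_i \cap gH_j'g^{-1}| > 1$ for some $g$; since the cyclic groups $H_i = \langle\varphi_1(c_i)\rangle$ and $H_j' = \langle\varphi_2(c_j)\rangle$ have orders $7,3,3$ and $7,4,2$, only $H_1$ and $H_1'$ (both of order $7$) can meet nontrivially, so every singular point of $X$ lies over $(p_1,p_1') = (1,0)$. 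Because $(\xi'(t),\eta'(t)) = (1,0)$ exactly when $t^3 = 1$, the curve $D'$ meets $\Sing X$ only at the images of the cube roots of unity $t = 1,\omega,\omega^2$, which are precisely the three branch points of $\gamma = \hat\lambda_1'$ (of signature $(7,7,7)$), each of multiplicity $7$; by Lemma \ref{branchpoints} each maps to a singularity of type $\frac{1}{7}(1,a)$.

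The technical heart is the local analysis at these three points. Expanding near $t = 1$ gives $\xi'-1 \sim 3(t-1)$ and $\eta' \sim \frac{3}{16}(t-1)^2$, so this branch is smooth and tangent to the axis $\{v = 0\}$; near $t = \omega,\omega^2$ both coordinates vanish to first order, giving two branches transverse to the axes. I would then use the local model $X \to \PP^1\times\PP^1$ around a singular point, namely $\CC^2/\frac{1}{7}(1,a) \to \CC^2,\ (x,y)\mapsto(x^7,y^7)$, together with the covering data, to check that the two transverse branches ($t = \omega,\omega^2$) lift to one and the same singular point, of type $\frac{1}{7}(1,1)$, and meet there transversally, producing the ordinary double point; and that the tangent branch ($t = 1$) lifts to a distinct point of type $\frac{1}{7}(1,2)$, where $D'$ is smooth and runs in along the coordinate axis hitting the $(-4)$-end of the Hirzebruch--Jung chain $\frac{7}{2} = [4,2]$. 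The symmetry $t \mapsto 1/t$, which leaves $\eta'$ invariant and acts by $(u,v)\mapsto(1/u,v)$ on $\PP^1\times\PP^1$, fixing the $t=1$ branch and exchanging the $t=\omega,\omega^2$ branches, makes this grouping natural, but the precise assignment of branches to singular points rests on the explicit monodromy and is the step I expect to be the main obstacle. I would also verify that $D'$ has no other singular points: a self-intersection of $D'$ projects to a self-intersection of the plane model $t\mapsto(t^3,\eta'(t))$, and off the branch locus of $\lambda_1\times\lambda_2$ the map $\lambda$ is \'etale, so two branches crossing there lift to distinct sheets; one checks the node above is the only singularity.

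Granting these incidences, the intersection numbers are forced. The node at $\frac{1}{7}(1,1)$ consists of two transverse branches, and its resolution is the single $(-7)$-curve $E_7$, so $E'E_7 = 2$; the single smooth branch at $\frac{1}{7}(1,2)$ enters along the axis meeting $E_4$, giving $E'E_4 = 1$ and $E'E_2 = 0$. The asserted numerical equivalence is then just the solution of $\pi^*D'\cdot E_i = 0$: writing $E' = \pi^*D' - c_7E_7 - c_4E_4 - c_2E_2$, the relations $-7c_7 = -E'E_7$, $-4c_4 + c_2 = -E'E_4$ and $c_4 - 2c_2 = -E'E_2$ give $c_7 = c_4 = \frac{2}{7}$ and $c_2 = \frac{1}{7}$, that is $E' \equiv \pi^*D' - \frac{1}{7}(2E_7 + E_2 + 2E_4)$.

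Finally, since the normalization of $D'$ is $N \cong \PP^1$ and the unique singularity of $D'$ is the node at $\frac{1}{7}(1,1)$, whose branches separate once this point of $X$ is resolved, the strict transform $E'$ is a smooth rational curve, so $p_a(E') = 0$. For its self-intersection I would avoid computing $D'^2$ and instead use adjunction on $S$, namely $E'^2 = -2 - K_SE'$. Using $\lambda_{12}^*K_X = K_{C_1\times C_2}$ and $\lambda_{12}^*D' = C'$ together with $g(C_1) = 17$, $g(C_2) = 10$ (Proposition \ref{finite}) and the fibre degrees $\deg\hat\xi' = 3$, $\deg\hat\eta' = 4$, I get $K_X D' = \frac{1}{168}K_{C_1\times C_2}\cdot C' = \frac{1}{168}\big(32\cdot 3 + 18\cdot 4\big) = 1$. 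The discrepancies, fixed by $(K_S+E_j)E_j = -2$, are $a_7 = -\frac{5}{7}$, $a_4 = -\frac{4}{7}$, $a_2 = -\frac{2}{7}$, whence $K_SE' = K_XD' + \sum a_i(E_iE') = 1 - \frac{10}{7} - \frac{4}{7} = -1$, and therefore $E'^2 = -2 - (-1) = -1$, as claimed.
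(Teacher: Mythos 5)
Your overall architecture mirrors the paper's, and the parts you actually carry out are correct: the group-theoretic localization of $\Sing X$ over $(p_1,p_1')=(1,0)$, the expansion showing $R'$ has one branch tangent to $\{v=0\}$ (at $t=1$) and two branches transverse to the axes (at $t=\omega,\omega^2$), the linear algebra giving $E'\equiv\pi^*D'-\frac17(2E_7+E_2+2E_4)$ from the stated intersection numbers, the computation $K_XD'=1$, the discrepancies, and $K_SE'=-1$. But there is a genuine gap exactly where you flag one: you never prove that the two transverse branches lift to branches of $D'$ through the \emph{same} singular point, of type $\frac17(1,1)$, nor that the tangent branch lifts to a smooth branch through a $\frac17(1,2)$ point whose strict transform meets $E_4$ once and misses $E_2$. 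Those assignments are the entire content of $E'E_7=2$, $E'E_4=1$, $E'E_2=0$; with them unproved, everything downstream is conditional. The paper settles this not by a monodromy computation but by the fiber-product structure of the construction: $\hat\xi'$ is \'etale, and $\hat\eta'=\hat\eta_1'\circ\hat\eta_2'$ with $\hat\eta_1'$ \'etale and $\hat\eta_2'$ ramified exactly at the $24$ points over the orbit of $t=1$; hence at a fixed point over $t=\omega,\omega^2$ the curve $C'$ has local equation $\{x=y\}$ with stabilizer acting with weights $(1,1)$, while over $t=1$ it is $\{x^2=y\}$ with weights $(1,2)$, and the stated incidences follow by a local computation. (Your local-model idea can in fact be completed without monodromy: since $D'\to R'$ is birational, each branch of $D'$ maps isomorphically to a branch of $R'$; over a transverse branch the seven preimage lines $\{y=\zeta^j\alpha^{1/7}x\}$ are individually invariant only for weights $(1,1)$, and for any other weight the unique branch downstairs maps $7{:}1$, so transverse branches force type $\frac17(1,1)$ — and there is only one such point — while tangency of order two forces weights $(1,2)$. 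But this argument must actually be made; it is not in your text.)

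Your smoothness argument for $E'$ has a second, independent gap. You deduce smoothness from "the node is the only singularity of $D'$", but $R'$ has arithmetic genus $(4-1)(3-1)=6$ while the ordinary triple point at $(1,0)$ accounts only for $\delta=3$, so $R'$ necessarily has further singular points; at each of these you must show that the crossing branches of $R'$ lift to branches of $D'$ at \emph{different} points of $X$. \'Etaleness of $\lambda$ off the branch locus gives disjoint sheets but says nothing about which sheets $D'$ occupies, so "lift to distinct sheets" is an assertion, not an argument. The paper sidesteps this entirely: having computed $K_SE'=-1$, it invokes remark \ref{smrat} (on a surface of general type any irreducible curve with $K_SC\le 0$ is smooth and rational), so smoothness and rationality of $E'$ come out \emph{a posteriori}, adjunction then gives $E'^2=-1$, and both the absence of extra nodes of $D'$ and the transversality of the two branches at the $\frac17(1,1)$ point (the "ordinary" in "ordinary double point") are consequences rather than inputs. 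Rerouting your final paragraph through that remark would repair this second gap; the first one, however, requires the actual local analysis.
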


\begin{proof}
The composition of $e'$ with $\lambda$ is the map $(\xi',\eta')$, which is birational onto its image.
Therefore $e'$ is also birational, and $D'$ is singular at most
over the singular points of $(\xi',\eta')(\PP^1)=:R'$.

Consider the point $(1,0)\in \PP^1 \times \PP^1$; it is the image of the third roots of
$1$ under the map $(\xi',\eta')$, so $R'$ has a triple point $z$ there.

The points of $\hat{C}'$ lying over $z$
are exactly the $72$ points with nontrivial stabilizer for the action
of $G$, divided in $3$ orbits, one for each branch of the triple point $z$ of $R'$.

Choose a branch, let $P\in \hat{C}'$ be one of the $24$ points in the
corresponding orbit.
Since $\hat{\xi}'$ is \'etale, the map $(\hat{\xi}',\hat{\eta}')$ is a local
diffeomorphism near $P$. $\hat{\lambda}_2'(P)$ is
one of the three branch points of $\hat{\lambda}'_2$
(depending only on the chosen branch of the singular point $z$) one of which is of
ramification for $\eta'_2$, two are not.

In the latter case, both $\hat{\xi}'$, $\hat{\eta}'$ are local
diffeomorphisms, equivariant for the action of the stabilizer of $P$.
It follows that there are local coordinates in $C_1 \times C_2$ such
that the corresponding branch of $C'$ is $\{x=y\}$ and the group acts
as $(x,y) \mapsto (e^\frac{2\pi i}{7} x,e^\frac{2\pi i}{7} y)$:
we have then two branches of $D'$ through the singular point $\frac17(1,1)$.

If instead $\hat{\lambda}_2'(P)$ is a ramification point of $\eta'_2$, the local equation of the branch is $\{x^2=y\}$
and the action is $(x,y) \mapsto (e^\frac{2\pi i}{7} x,
e^\frac{4\pi i}{7} y)$: the corresponding branch of $D'$ passes
through a point $\frac17(1,2)$ and a local computation shows that
its strict transform intersects transversally the $(-4)$-curve and does
not intersect the $(-2)$-curve.

We have computed how $E'$ intersects the $E_d$, the claim on the numerical equivalence follows by standard intersection arguments.

Then \begin{multline*}
K_{C_1 \times C_2} C'=18 \cdot 4 + 3 \cdot 32=168\\
 \Rightarrow K_X \cdot D' = 1 \Rightarrow K_S \cdot E'= K_X \cdot D' -\frac17 (2K_SE_7+K_SE_2+2K_SE_4)=-1
\end{multline*}

Since $S$ is of general type and $E'$ is irreducible with $K_S E' <0$, by remark \ref{smrat}
$E'$ is smooth. This concludes the proof.

\end{proof}
\subsection{The rational curve $E''$}

The construction is  similar to the previous one.
We change the choice of the branch points, here $(p_1, p_2, p_3)=
(0,\frac{i}{3\sqrt{3}},-\frac{i}{3\sqrt{3}})$,
$(p'_1, p'_2, p'_3)=(1,\infty,0)$.

We define three maps $\PP^1 \to \PP^1$ as follows: $\xi''_2(t)=\frac{2t}{t^2+1}$,
$\xi''_1(t)=\frac{t^2-1}{t^3-9t}$, $\eta''(t)=t^4$.

Note that $\xi_2''$ is
the quotient by the involution $t \mapsto \frac{1}{t}$, $\xi''_1$ is the $\ZZ/3\ZZ$-cover given by 
$t \rightarrow \frac{t-3}{t+1}$,  and $\eta''$ is the $\ZZ/4\ZZ$-cover given by $t \mapsto it$.

By taking normalized fibre products as in the previous case, we get two commutative diagrams:

\begin{equation*}
\xymatrix{
\hat{C}''_1\ar^{\hat{\xi}''}@/^2pc/[rr]\ar^{\hat{\xi}''_2}[r]\ar^{\hat{\lambda}''_1}[d]&\bar{C}_1''\ar[d]^{\bar{\lambda}_1''}\ar^{\hat{\xi}''_1}[r]&C_1\ar^{\lambda_1}[d]\\
\PP^1\ar_{\xi''}@/_1pc/[rr]\ar^{\xi''_2}[r]&\PP^1\ar^{\xi''_1}[r]&\PP^1\\
}
\end{equation*}

\begin{equation*}
\xymatrix{
\hat{C}''_2\ar^{\hat{\eta}''}[r]\ar^{\hat{\lambda}''_2}[d]&C_2\ar^{\lambda_2}[d]\\
\PP^1\ar^{\eta''}[r]&\PP^1\\
}
\end{equation*}
where the vertical maps
are $PSL(2,7)$-covers and the horizontal maps are cyclic covers.

Note that $\eta''$ branches on $p'_2$, $p'_3$, $\xi_1''$ on $p_2$, $p_3$,
$\xi_2''$ on $\{\pm 1\}  \subset (\xi_1'')^{-1}(p_1)$.

\begin{lemma}\label{conjugated2}
$(\hat{C}''_1,\hat{\lambda}''_1)$ and $(\hat{C}''_2,\hat{\lambda}''_2)$ are isomorphic as Galois cover of $\PP^1$.
\end{lemma}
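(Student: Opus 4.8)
The plan is to imitate the proof of Lemma \ref{conjugated}: exhibit a common branch datum for the two $PSL(2,7)$-covers $\hat{\lambda}''_1$ and $\hat{\lambda}''_2$ of $\PP^1$, and then use \texttt{FindCurves} together with the remark on conjugacy classes recorded in that proof to conclude that the corresponding appropriate orbifold homomorphisms are equivalent, hence that the two covers are isomorphic.

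First I would carry out the ramification bookkeeping for the two towers, computing the branch locus and the local monodromies of each cover. For $\hat{\lambda}''_2$ this is immediate: $\eta''(t)=t^4$ is unramified over $p'_1$ and totally ramified of order $4$ over $p'_2$ and $p'_3$, whose branching indices $4$ and $2$ both divide $4$; therefore $\hat{\eta}''$ is \'etale over the latter two points and $\hat{\lambda}''_2$ is branched exactly over the four points $(\eta'')^{-1}(p'_1)$, i.e. the fourth roots of unity, each with branching index $7$ and local monodromy $\varphi_2(c_1)=(1658327)$. For $\hat{\lambda}''_1$ I would compose $\xi''=\xi''_1\circ\xi''_2$. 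Over $p_2,p_3$ the cover $\xi''_1$ is totally ramified of order $3$, and since the branching index there is $3$ the corresponding local monodromy $\varphi_1(c_i)^3$ is trivial, so $\hat{\lambda}''_1$ is unramified over $p_2,p_3$. Over $p_1$ the fibre $(\xi''_1)^{-1}(p_1)$ contains the two fixed points $\pm 1$ of the involution, where $\xi''_2$ ramifies with local degree $2$, together with one further point above which $\xi''_2$ is unramified. Tracking this through shows that $\hat{\lambda}''_1$ is branched precisely over the four points of $(\xi'')^{-1}(p_1)$, which are again the fourth roots of unity, all of branching index $7$, with local monodromies $\varphi_1(c_1)=(1824375)$ at the two preimages where $\xi''$ is unramified and $\varphi_1(c_1)^2$ at the two where it has local degree $2$.

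The decisive point is then that, although the two tuples of local monodromies differ, every entry lies in a single conjugacy class of $PSL(2,7)$. There are two classes of elements of order $7$, and the order-$3$ element normalizing a Sylow $7$-subgroup conjugates a generator $g$ to $g^2$; hence $(1824375)$, its square $(1824375)^2$, and $(1658327)$ (which is conjugate to $(1824375)$, as already used in Lemma \ref{conjugated}) all lie in one class $\mathcal{C}$. I would then run \texttt{FindCurves} on signature $(7,7,7,7)$ and group $PSL(2,7)$ to enumerate the appropriate orbifold homomorphisms $\BT(7,7,7,7)\to PSL(2,7)$ up to inner automorphisms and Hurwitz moves, and invoke the elementary fact from the proof of Lemma \ref{conjugated}: if all images of the standard generators belong to one conjugacy class, this property is preserved under the equivalence. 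Isolating from the output the equivalence class whose generators all lie in $\mathcal{C}$ shows that both $\hat{\lambda}''_1$ and $\hat{\lambda}''_2$ realize it, and therefore that the two Galois covers are isomorphic.

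The main obstacle I anticipate is the ramification analysis of the degree-six tower for $\hat{\lambda}''_1$: one must verify that $\xi''_2$ ramifies exactly over the two preimages $\pm 1$ of $p_1$ inside $(\xi''_1)^{-1}(p_1)$ and over no point above $p_2,p_3$, so that the branch locus is precisely the fourth roots of unity and the two distinct monodromies $\varphi_1(c_1)$ and $\varphi_1(c_1)^2$ occur. A secondary delicate point is that, with four branch points instead of three, the conjugacy class of the generators need not a priori single out a unique equivalence class; the argument then relies on \texttt{FindCurves} confirming that exactly one class has all generators in $\mathcal{C}$, so that the identification of the two covers is forced.
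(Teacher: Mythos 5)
Your ramification bookkeeping and the conjugacy-class reduction are correct and agree with the paper's argument, but the final step fails: the uniqueness you hope for is false. When \texttt{FindCurves} is actually run on the signature $(7,7,7,7)$ and the group $PSL(2,7)$, it returns $8$ equivalence classes of appropriate orbifold homomorphisms $\BT(7,7,7,7)\to PSL(2,7)$, and exactly \emph{two} of them (not one) have all four generator images in the conjugacy class $\mathcal{C}$ of $(1824375)$. Consequently the condition ``all $\hat{\varphi}_j(c_i)$ lie in $\mathcal{C}$'' does not force $\hat{\varphi}_1$ and $\hat{\varphi}_2$ into the same class, and your argument cannot conclude that the two covers are isomorphic. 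You flagged this as the delicate point, but the fallback you propose --- that the computation would confirm there is only one such class --- is precisely what breaks.

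The paper closes this gap with an additional, genuinely geometric step. The two surviving classes are distinguished by a Hurwitz-invariant feature detected by the computer: in one class every tuple in the Hurwitz orbit has pairwise distinct entries, while in the other the orbit contains tuples with two \emph{equal} (not merely conjugate) entries. The paper then shows, by choosing explicit geometric loops and tracing the monodromy through $\eta''$ and $\xi''_2$, that both homomorphisms admit representatives with two equal generator images: for $\hat{\varphi}_2$, since $0$ is a branch point of branching index $2$ for $\lambda_2$, the loops around $1$ and $-1$ (and likewise around $i$ and $-i$) have the same image in $G$; for $\hat{\varphi}_1$, one constructs loops $\mu_{\pm i}$ with $\xi''_2\circ\mu_i=\gamma_{\infty}$ and $\xi''_2\circ\mu_{-i}=\gamma_1^{7}\gamma_{\infty}$, whose images in $G$ coincide because $\gamma_1^7$ maps to the identity. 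This places both $\hat{\varphi}_1$ and $\hat{\varphi}_2$ in the second of the two classes, and only then does the isomorphism of the Galois covers follow. To repair your proof you would need to add exactly this monodromy computation, or some other invariant separating the two classes and realized by both covers.
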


\begin{proof}
Arguing as in the previous case, we see that $\hat{\lambda}''_1$ is a $PSL(2,7)$-cover with the four branch points $\xi''^{-1}(p_1)$, each of branching index 7, and $\hat{\lambda}''_2$ is a $PSL(2,7)$-cover with the four branch points $\eta''^{-1}(p'_1)$, each of branching index 7. Indeed, $\xi''^{-1}(p_1)=\hat{\lambda}''_2$ is the set of the fourth roots of unity. 

We denote by $\hat{\varphi}_i$ an appropriate orbifold homomorphism associated to $\hat{\lambda}''_i$. Arguing as in the proof of lemma \ref{conjugated} we see that for all $i$, $\hat{\varphi}_1(c_i)$ is conjugate to $\varphi_1(c_1)$ or to $\varphi_1(c_1)^2$. Similarly, $\hat{\varphi}_2(c_i)$ is conjugate to $\varphi_2(c_1)$.
 Since the 
three elements $\varphi_1(c_1)$, $\varphi_1(c_1)^2$ and $\varphi_2(c_1)$ are conjugate in $G$, all $\hat{\varphi}_j(c_i)$ are conjugate to $\varphi_1(c_1) = (1824375)$.

The following computation shows that there
are two equivalence classes of appropriate orbifold homomorphisms $\varphi \colon \BT(7,7,7,7) \ra PSL(2,7)$, distinguished by
the following feature: in one class the $\varphi(c_i)$  are always
pairwise distinct.

\begin{verbatim}
> #FindCurves({* 7^^4 *}, PSL(2,7));
8
> L:={@ @};
> for seq in FindCurves({* 7^^4 *}, PSL(2,7)) do test:= true;
for> for g in seq do
for|for> if not IsConjugate(PSL(2,7),g,PSL(2,7)!(1,8,2,4,3,7,5)) 
         then test:=false; break g;
for|for|if> end if;
for|for> end for;
for> if test then Include(~L,seq);
for|if> end if;
for> end for;
> #L;
2
> for k in [1..#L] do
for> M:={@ @};
for> for seq in HurwitzOrbit(L[k]) do
for|for> for i in [1..3] do for j in [i+1..4] do
for|for|for|for> if seq[i] eq seq[j] then Include(~M, seq);break i;
for|for|for|for|if> end if; end for; end for;end for;
for> #M;
for> end for;
0
840
>
\end{verbatim}
We need to show that $\hat{\varphi}_1$ and $\hat{\varphi}_2$ belong to same class, in fact to the second.

\underline{$\hat{\varphi}_2$}: Consider the map $\eta''$,
and choose as base point for $\pi_1(\PP^1\setminus \{\pm1, \pm i\}, p)$ a point $p=\epsilon$,
$\epsilon \in \RR$, $0<\epsilon<<1$.

We define the following geometric loops with starting point
$\epsilon$: 
\begin{itemize}
\item $\gamma_1$ moves on the real axis from $\epsilon$ to
$1-\epsilon$, then makes a circle counterclockwise around $1$,
and moves back on the real axis to $\epsilon$.
\item $\gamma_2=\alpha(i\gamma_1)\alpha^{-1}$ where $\alpha$ is a quarter of a
circle around $0$ from $\epsilon$ to $i\epsilon$.
\item $\gamma_3=\beta(-\gamma_1)\beta^{-1}$ where $\beta$ is a half
circle around $0$ from $\epsilon$ to $-\epsilon$. 
\item $\gamma_4$ is
a similarly defined loop around $-i$. 
\end{itemize}
Then $\gamma_1\cdots\gamma_4=1$.
Now it is easy to see (since $0$ is a branch point of branching
index $2$ for $\lambda_2$) that the image of $\gamma_1$ in $G$ is the
same as the image of $\gamma_3$ (and the image of $\gamma_2$ is the same as the image $\gamma_4$).

\underline{$\hat{\varphi}_1$}: Let $\gamma_1$, $\gamma_{\infty}$, $\gamma_{-1}$ be geometric loops with
base point $p=0$, $\gamma_j$ around $j$,  $\gamma_1\gamma_{\infty}\gamma_{-1}=1$ in
$\pi_1(\PP^1\setminus\{\pm 1, \infty\})$.
Then we can find geometric loops $\mu_{1}, \mu_{i}, \mu_{-1}, \mu_{-i}$ with base point $0$, $\mu_j$ around $j$,
$\mu_{1}\mu_{i}\mu_{-1}\mu_{-i}=1$ in $\pi_1(\PP^1\setminus\{\pm 1, \pm i\})$,
such that $\xi''_2\circ \mu_i=\gamma_{\infty}$,
$\xi''_2\circ \mu_{-i}=\gamma_1^7\gamma_{\infty}$, which have the same image in $G$.
\end{proof}

The curve $\hat{C}'':=\hat{C}''_1=\hat{C}''_2$ is a smooth
curve of genus $1+ \frac{168}{2}(-2 + 4\frac67) = 121$ with an action of PSL(2,7) (in fact, of $PSL(2,7) \times D_4$, where $D_4$ is the dihedral group of order $8$). The divisor $C'':=(\hat{\xi}'',\hat{\eta}'')(\hat{C}'')\subset C_1
\times C_2$ is $G-$invariant, and the quotient is a
rational curve $\hat{C}''/G \cong \PP^1 \stackrel{e''}{\rightarrow} D''
\subset X$. Note that $\lambda \circ e''=(\xi'',\eta'')$, which is birational.
Therefore $e''$ is also birational.

\begin{prop}
$D''$ has an ordinary double point at the singular point
$\frac17(1,1)$ and contains both the other singular
points of $X$.

Let $E''$ be the strict trasform of
$D''$ on $S$, let $E_7$ be the exceptional divisor over the  singular point of type $\frac17(1,1)$,
$E_2$, $E_4$, $E'_2$, $E'_4$ be the other exceptional divisors,
with $E_d^2=(E'_d)^2=-d$, $E_2E_4=E'_2E'_4=1$.

Then $E'' E_7=2$, $E'' E_4=E'' E'_4=1$, $E'' E_2=E'' E'_2=0$ and $E''$ is numerically
equivalent to $D''-\frac17(2E_7+E_2+2E_4+E'_2+2E'_4)$. 

Moreover, $E''$ is a smooth rational curve
with selfintersection $-1$.
\end{prop}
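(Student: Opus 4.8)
The plan is to transcribe the proof of Proposition \ref{E'}, the only new feature being that $R'':=(\xi'',\eta'')(\PP^1)$ acquires a \emph{quadruple} point in place of a triple one. Since $\lambda\circ e''=(\xi'',\eta'')$ is birational (as noted just above the statement), $D''$ can be singular only over the singular points of $R''$. The decisive computation is that $\xi''(t)=0=p_1$ holds exactly for $t\in\{1,i,-1,-i\}$: indeed $(\xi''_1)^{-1}(0)=\{1,-1,\infty\}$, while $\xi''_2$ sends $1,-1$ to $1,-1$ and sends $\pm i=(\xi''_2)^{-1}(\infty)$ to $\infty$. As $\eta''(t)=t^4=1=p'_1$ holds for the same four values, all four fourth roots of unity map to $(p_1,p'_1)$, so $R''$ has a quadruple point there; correspondingly the $96$ points of $\hat C''$ with non-trivial stabiliser lying over $(p_1,p'_1)$ split into four $G$-orbits, one per local branch.

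Next I would determine the singularity through which each branch passes, exactly as in Proposition \ref{E'}. Because $\eta''=t^4$ is unramified at every fourth root of unity, $\hat\eta''$ is a local diffeomorphism at all four orbits, whereas $\xi''=\xi''_1\circ\xi''_2$ is étale at $t=\pm i$ but ramified of index $2$ at the branch points $t=\pm1$ of $\xi''_2$. Hence at $t=\pm i$ both projections are local diffeomorphisms, the branch of $C''$ is $\{x=y\}$ with stabiliser acting by $(x,y)\mapsto(e^{2\pi i/7}x,e^{2\pi i/7}y)$, giving a branch of $D''$ through a point $\frac17(1,1)$; and at $t=\pm1$ the branch is $\{x=y^2\}$ with action $(x,y)\mapsto(e^{2\pi i/7}x,e^{4\pi i/7}y)$, giving a branch through a point $\frac17(1,2)$ whose strict transform, by the same local computation as in Proposition \ref{E'} (both branches are tangent to the same coordinate axis), meets the $(-4)$-curve transversally and misses the $(-2)$-curve. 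Since $X$ carries the single $\frac17(1,1)$-point and the two $\frac17(1,2)$-points of the basket, the two $\frac17(1,1)$-branches must collide at that unique point, yielding the asserted ordinary double point, while the two $\frac17(1,2)$-branches separate the two $\frac17(1,2)$-points. This gives $E''E_7=2$, $E''E_4=E''E'_4=1$, $E''E_2=E''E'_2=0$, and inverting the intersection form on each Hirzebruch--Jung string ($[-7]$ and two copies of $[-4,-2]$) produces the stated numerical equivalence, the coefficients being $\frac27$ over the $(-7)$-curve and $\frac17,\frac27$ over each $(-2),(-4)$ pair.

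Finally I would compute $K_S\cdot E''$. With $\deg\xi''=6$, $\deg\eta''=4$ and $2g_1-2=32$, $2g_2-2=18$ (as in the proof of Proposition \ref{E'}), one gets $K_{C_1\times C_2}\cdot C''=6\cdot32+4\cdot18=264$, hence $K_X\cdot D''=\frac{1}{|G|}K_{C_1\times C_2}\cdot C''=\frac{264}{168}=\frac{11}{7}$. Using $K_SE_7=5$, $K_SE_4=K_SE'_4=2$ and $K_SE_2=K_SE'_2=0$, the numerical equivalence yields
\[
K_S\cdot E''=\frac{11}{7}-\frac17\bigl(2\cdot5+2\cdot2+2\cdot2\bigr)=\frac{11}{7}-\frac{18}{7}=-1 .
\]
Since $E''$ is irreducible with $K_S\cdot E''<0$ on a surface of general type, remark \ref{smrat} forces $E''$ to be smooth and rational, and adjunction then gives $(E'')^2=-2-K_S\cdot E''=-1$. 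The genuinely delicate step is the middle one: although transcribing the local analysis of Proposition \ref{E'} is routine, verifying that the two nodal branches land on a single $\frac17(1,1)$-point \emph{transversally} and that the two remaining branches hit \emph{distinct} $\frac17(1,2)$-points is not formal. It is precisely here that one must track the four $G$-orbits via Proposition \ref{bijective} together with the explicit monodromy of Lemma \ref{conjugated2}, rather than merely quoting Proposition \ref{E'}.
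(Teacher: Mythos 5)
There is a genuine gap, and it sits exactly where you flag it yourself at the end. Your local analysis correctly yields that two branches of $D''$ pass through points of type $\frac17(1,1)$ and two through points of type $\frac17(1,2)$, and since $X$ has a \emph{unique} $\frac17(1,1)$-point the first two branches must indeed collide there, giving $E''E_7=2$. But the parallel assertion that ``the two $\frac17(1,2)$-branches separate the two $\frac17(1,2)$-points'' has no justification: $X$ has \emph{two} points of that type, so nothing in the branch analysis prevents both branches from passing through the same one (giving $E''E_4=2$, $E''E'_4=0$) while $D''$ misses the other entirely. You acknowledge in your closing sentences that this step is ``not formal'' and would require tracking the four $G$-orbits via Proposition \ref{bijective} and the monodromy of Lemma \ref{conjugated2} --- but you never carry that out, so the claimed intersection numbers, and with them the stated numerical equivalence, remain unproved. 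Note also a structural weakness this creates: your computation of $K_S\cdot E''$ is run through that (unproved) numerical equivalence, so as written it is predicated on the very dichotomy you left open.

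The paper resolves this differently, and the difference is the heart of its proof. It deliberately does \emph{not} decide the dichotomy at the local stage: it records only the symmetric facts $E''E_7=2$, $E''E_2=E''E'_2=0$, $E''(E_4+E'_4)=2$, and observes that these already determine
$$K_S\cdot E''=\frac{11}{7}-2\cdot\frac57-2\cdot\frac47=-1,$$
since the total discrepancy contribution $2\cdot\frac47$ from the $(-4)$-curves is the same whether the two intersections fall on one Hirzebruch--Jung string or are split between the two. Remark \ref{smrat} then makes $E''$ a smooth rational $(-1)$-curve, and only \emph{afterwards} is the dichotomy settled: if $E''E_4=2$ (or $E''E'_4=2$), contracting the $(-1)$-curve $E''$ would turn $E_4$ into a singular irreducible curve with $K\cdot(\mathrm{image})=K_SE_4-2=0$, contradicting remark \ref{smrat} on a surface of general type. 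Hence $E''E_4=E''E'_4=1$, and the numerical equivalence follows. This a posteriori contraction argument is precisely the missing piece in your proposal; incorporating it (and deferring the numerical-equivalence claim until after it) would close the gap without any orbit-tracking computation.
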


\begin{proof}
Consider the point $(0,1)$, quartuple point of $R'$. The points of $\hat{C}''$ dominating it
are exactly the $96$ points with nontrivial stabilizer for the action
of $G$, divided in $4$ orbits, one for each branch of $R'$.

Choose a branch of the quartuple point, let $P\in \hat{C}''$ be one of the $24$ points in the
corresponding orbit above it.
Since $\hat{\eta}''$ is \'etale, the map $(\hat{\xi}'',\hat{\eta}'')$
is a local diffeomorphism near $P$. $\hat{\lambda}_1''(P)$ is
one of the branch points of $\hat{\lambda}''_1$
(depending only on the choosen branch), two of which are
ramification points of $\xi''_2$, two are not.

In the latter case, arguing as in the proof of  proposition \ref{E'}, the corresponding
branch of $D''$ passes through the singular point of type $\frac17(1,1)$ and is smooth there. Instead,
in the first case, it passes through a point of type $\frac17(1,2)$, and
its strict transform intersects transversally the $(-4)$-curve and does
not intersect the $(-2)$-curve.

It follows that $E'' E_7 = 2$, $E''  E_2 = E''  E'_2$, $E''(E_4+E'_4)=2$. We still do not know whether $D''$ passes through both singular points $\frac 17(1,2)$ (equivalently $E'' E_4 = E''E'_4=1$), or misses one of them and passes twice through the other.

Then \begin{multline*}
K_{C_1 \times C_2} C''=18 \cdot 4 + 6 \cdot 32=264\\
 \Rightarrow K_X \cdot D'' = \frac{11}{7} \Rightarrow K_S \cdot E''=
\frac{11}{7} -2 \frac57  -2 \frac47 =-1.
\end{multline*}

Since $S$ is of general type and $E''$ is irreducible with $K_S E'' <0$, by remark \ref{smrat},
$E''$ is smooth. This proves that $E''$ is a rational $(-1)$-curve.

If $E'' E_4=2$ or $E''E_4' =2$, after contracting $E''$, we get a contradiction to remark \ref{smrat}.

Therefore, $E'' E_4=E''E_4' =1$.
\end{proof}

\begin{cor}
Let $\pi' \colon S \ra S'$ be the blow down of $E'$ and $E''$. Then $S'$ is minimal.
\end{cor}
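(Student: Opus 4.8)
The plan is to prove minimality by showing that $S'$---which is of general type, being birational to $S$, and satisfies $K_{S'}^2 = K_S^2 + 2 = 3$---contains no $(-1)$-curve; for a surface of general type this is equivalent to minimality. I would argue by contradiction: suppose $\bar E \subset S'$ is a $(-1)$-curve and let $E$ be its strict transform on $S$. Since $\pi'$ contracts the two disjoint $(-1)$-curves $E'$ and $E''$, we have $K_S = (\pi')^* K_{S'} + E' + E''$, and the projection formula gives $K_S\cdot E = -1 + E\cdot E' + E\cdot E''$.

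Next I would pin down the numerics of $E$. As the strict transform of the smooth rational curve $\bar E$, the curve $E$ is smooth and rational; comparing $\bar E^2 = -1$ with the adjunction formula on $S$ forces $E\cdot E'\in\{0,1\}$ and $E\cdot E''\in\{0,1\}$, so that $K_S\cdot E\in\{-1,0,1\}$ and $E^2\in\{-1,-2,-3\}$ accordingly. I would then rule out that $E$ is one of the $\pi$-exceptional curves $E_7,E_2,E_4,E'_2,E'_4$: a short intersection computation (in the style of Proposition \ref{E'} and its analogue for $E''$) shows that the image of each has non-negative intersection with $K_{S'}$, so none is a $(-1)$-curve. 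Hence $\Gamma:=\pi_*(E)$ is a genuine irreducible rational curve on $X$, and the machinery of Section \ref{rational} applies to it.

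The core of the argument uses the normalization diagram \eqref{normalization} for $\Gamma$. By Lemma \ref{branchpoints} every branch point of the associated cover $\gamma\colon\tilde\Gamma_1\to\PP^1$ lies over a singular point of $X$, and since all three singular points have $n=7$ each branching index must equal $7$. Because $C_1\times C_2$ contains no rational curve, $\tilde\Gamma_1$ has positive genus, so by Hurwitz $\gamma$ has at least three branch points of index $7$. In the case $K_S\cdot E=-1$ the curve $E$ is itself a $(-1)$-curve, so Proposition \ref{countingpoints} and Corollary \ref{nu} bound the branch points over $\frac17(1,1)$ by $4$ and over each $\frac17(1,2)$ by $1$; this restricts the signature of $\gamma$ to one of $(7^3),(7^4),(7^5),(7^6)$, and the enumeration already carried out with \emph{FindCurves} in Lemmas \ref{conjugated} and \ref{conjugated2} shows that the only $(-1)$-curves so obtained are $E'$ and $E''$. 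Since $\bar E$ is a curve we have $E\neq E',E''$, a contradiction.

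The main obstacle is the remaining range $K_S\cdot E\in\{0,1\}$, where $E$ is a $(-2)$- or $(-3)$-curve meeting $E'$ and/or $E''$ and Proposition \ref{countingpoints}, being stated for $(-1)$-curves, does not apply directly. I would treat this by analysing the configuration $E\cup E'\cup E''$: contracting $E'$ and then $E''$ turns $E$ into a $(-1)$-curve, and the resulting chains of rational curves---together with the Hirzebruch--Jung strings and the $(-2)$-curves $E_2,E'_2$ (which, by Proposition \ref{E'} and its analogue, meet neither $E'$ nor $E''$)---must be compatible with Remark \ref{smrat} on each intermediate surface. Determining, via Lemma \ref{branchpoints}, which singular points such a $\Gamma$ may pass through, and re-running the cover enumeration for the few low-genus signatures that could then occur, should exclude every such $E$. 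I expect this extension of the counting of Proposition \ref{countingpoints} to the $(-2)$/$(-3)$ case, and the verification that the enumeration is exhaustive, to be the genuinely delicate part of the proof.
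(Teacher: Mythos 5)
Your general frame --- assume a $(-1)$-curve on $S'$, pass to its strict transform $E$ on $S$, push forward to $X$, and derive a contradiction from Lemma \ref{branchpoints} together with the fact that $C_1\times C_2$ contains no rational curves --- is the same as the paper's, and your preliminary numerics ($E\cdot E',\,E\cdot E''\in\{0,1\}$, hence $K_S\cdot E\in\{-1,0,1\}$) are correct. But the proof is not complete, and the one case you do claim to close is closed illegitimately. For $K_S\cdot E=-1$ you assert that ``the enumeration already carried out with \emph{FindCurves} in Lemmas \ref{conjugated} and \ref{conjugated2} shows that the only $(-1)$-curves so obtained are $E'$ and $E''$.'' Those lemmas do no such thing: they classify appropriate orbifold homomorphisms $\BT(7,7,7)\ra PSL(2,7)$ and (with all images in a single conjugacy class) $\BT(7,7,7,7)\ra PSL(2,7)$ up to equivalence, solely in order to identify the two covers constructed there. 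They do not treat the signatures $(7^5)$, $(7^6)$ that your own bound allows; they say nothing about which equivalence classes are realized by curves lying on this particular $X$, nor about which of those curves have $(-1)$-curves as strict transforms; and, crucially, the group acting on a component $\Gamma_1$ of the preimage of $\Gamma$ is a subgroup $H\leq G$ that need not be all of $PSL(2,7)$, so the covers to be enumerated are not even $PSL(2,7)$-covers in general. Finally, the cases $K_S\cdot E\in\{0,1\}$ --- exactly the cases where the hypothetical curve interacts with $E'$, $E''$ --- you leave as an expectation (``should exclude every such $E$''); that is where the actual content of the proof lies.

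The paper avoids all of this by running the bound in the opposite direction. It works directly on $S'$ with the contracted configuration (\ref{configS'}): by the arguments of Proposition \ref{countingpoints} and Remark \ref{smrat}, a $(-1)$-curve $E'''$ on $S'$ can meet at most one of the three $(-2)$-curves (with multiplicity one), can meet the smooth $(-3)$-curve at most once, and cannot meet the singular $(-3)$-curve at all. Hence the strict transform of $E'''$ on $S$ meets the Hirzebruch--Jung strings in at most two points, so $\Gamma=\pi_*\pi'^{-1}_*E'''$ passes through at most two singular points of $X$, and by Lemma \ref{branchpoints} the cover $\gamma$ of diagram (\ref{normalization}) has at most \emph{two} critical values. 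This forces $\Gamma_1$ to be rational --- the desired contradiction --- with no case division on $K_S\cdot E$ and no enumeration of covers whatsoever. If you want to repair your write-up, this ``at most two critical values'' argument is the missing idea; a lower bound of three branch points combined with a classification of all covers of signature $(7^k)$ is not a feasible substitute.
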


\begin{proof}
Since $S$ is of general type, $E'$ and $E''$ are disjoint.  We have a configuration of rational curves on $S$, whose dual graph is the following:
\begin{equation}\label{configS'}
\xymatrix{
&*+[o][F]{-1}\ar@{-}[d]\ar@{=}[dl]\ar@{-}[dr]&\\
*+[o][F]{-7}\ar@{=}[d]&*+[o][F]{-4}\ar@{-}[d]\ar@{-}[dl]&*+[o][F]{-4}\ar@{-}[d]\\
*+[o][F]{-1}&*+[o][F]{-2}&*+[o][F]{-2}\\
}
\end{equation}

After contracting  $E'$ and $E''$ the induced configuration consists of a singular $(-3)$-curve, three smooth $(-2)$-curves and a smooth $(-3)$-curve. 

Assume that there is a smooth rational curve $E'''$ with selfintersection $(-1)$ on $S'$. By the same arguments as in prop. \ref{countingpoints} $E'''$ can intersect only one of the $(-2)$-curves (with multiplicity one), and the smooth (-3)-curve (again with multiplicity one). The singular $(-3)$ curve instead cannot intersect $E'''$, because this would (after contracting $E'''$) give a contradiction to remark \ref{smrat}.

Let $\Gamma \subset X$ be the rational curve $\pi_*\pi'^{-1}_*E'''$ on X. Then, by lemma \ref{branchpoints} the induced map $\gamma$ (cf. diagram \ref{normalization}) has at most two critical values, and therefore $\Gamma_1$ is rational, a contradiction (cf. proof of prop. \ref{baskets}).
\end{proof}

\section{Some remarks about the computational complexity}
In this short section we will comment on the necessity to use a computer algebra program in this paper, and also on the time and memory that is needed for the various calculation we did.

We use the computer algebra program MAGMA, but  our algorithms can be  implemented in any other computer algebra program which has a database of finite groups (e.g. GAP4).

The heaviest computational problem we encountered are caused by the first step of the algorithm in section \ref{algorithm}: there we compute for each $1 \leq K^2 \leq 7$ the possible baskets of singularities for $(K^2,1)$.
Our  algorithm is quite slow, but has a very low memory usage. Indeed, making the algorithm quicker had disastrous effects on the memory usage. 

In the following table we report the computation time and memory usage of the script {\it Baskets} for 
each $K^2$. Almost all computations have been done on a simple workstation with 4GB of RAM.

\medskip
\noindent
\begin{tabular}{|c|c|c|c|c|c|c|c|c|}
\hline
$K^2$&7&6&5&4&3&2&1&0\\
\hline
time (s)&0.00&0.00&0.01&0.11&2.16&45.99&1185.85&43316.7\\
\hline
memory (MB)&7.64&7.64&7.64&7.64&7.64&8.31&9.83&18.42\\
\hline
\end{tabular}

\medskip\noindent

An improvement of this algorithm (i.e., to make it faster without substantially increasing the memory 
usage) would constitute the major step towards extending the results of the present paper to negative 
values of $K^2$. Indeed we give the analogous table for {\it ExistingSurfaces}.

\medskip
\noindent
\begin{tabular}{|c|c|c|c|c|c|c|c|c|}
\hline
$K^2$&7&6&5&4&3&2&1&0\\
\hline
time (s)&0.00&1811&3659&5132&385&8065&2632&84989\\
\hline
memory (MB)&7.64&119.03&119.03&119.55&118.64&120.06&120.23&397.39\\
\hline
\end{tabular}

\medskip\noindent
{\it ExistingSurfaces} first runs {\it Baskets}. We notice that for the first cases the time requested by this first computation is negligible. For $K^2 =0$ it is more or less the half.

The other scripts in the main algorithm are quite harmless in time and memory usage.

\medskip
The computations in sections \ref{exceptional}, \ref{fakegodeaux} are neither time nor memory demanding, except for propositions  \ref{1536} and \ref{768}, where we had to use  a SmallGroup Process (and they actually are quite heavy). In fact, we had to run those two computations on a better workstation (32GB of RAM).

The first lasted 192261.54 sec. (53-54 hours) and needed 19102.22 MB, the second lasted 4581.27 sec. and needed 4509.09 MB.


\appendix
\section{The minimal product-quotient surfaces of general type with $p_g=0$ and $K^2<8$}\label{details}

In this section we describe all the minimal product-quotient surfaces we have listed in tables 
\ref{K2>4} and \ref{K2<3}, with the exception of the one whose singular model $X$ has at worse 
canonical singularities (these are already described in \cite{pg=q=0} and \cite{4names}).

In the sequel we will follow the scheme below:

\begin{itemize}
\item[$G$:] here we write the group $G$ (most of the times as permutation group);
\item[$t_i$:] here we specify the respective types of the pair of spherical generators of
the group $G$;
\item[$S_1$:] here we list the first set of spherical generators;
\item[$S_2$:]  here we list the second set of spherical generators;
\item[$H_1$:] the first homology group of the surface;
\item[$\pi_1$:] the fundamental group of the surface;
\end{itemize}

\subsection{$K^2=5$, basket $\{ \frac13(1,1)+\frac13(1,2)\}$}

\subsubsection{Group ${\mathfrak S}_4 \times \ZZ_2$:}
\begin{itemize}
\item[$G$:] $\langle (12), (13), (14), (56) \rangle < {\mathfrak S}_6$;
\item[$t_i$:] $(3,2^4)$ and $(6,4,2)$;
\item[$S_1$:] (134),(34)(56),(13)(24)(56),(23)(56), (13)(24)(56);
\item[$S_2$:] (234)(56), (4321)(56), (14);
\item[$H_1$:] $\ZZ_2^2 \times \ZZ_4$;
\item[$\pi_1$:] the fundamental group of this surface fits in two exact sequences
$$
1 \rightarrow \ZZ^2 \rightarrow \pi_1 \rightarrow D_{2,8,3}
\rightarrow 1
$$
$$
1 \rightarrow \ZZ^2 \rightarrow \pi_1 \rightarrow Q(16)
\rightarrow 1
$$
where $Q(16)$ is the generalized quaternion group of order $16$.

The normal subgroups of index $16$ of $\pi_1$ on the left have minimal index
among the normal subgroups of $\pi_1$ with free abelianization.  Let us
recall that $D_{2,8,3}$ is the group $<x,y|x^2,y^8,xyx^{-1}y^{-3}>$ and $Q(16)$ is the group $<x,y|x^8,x^4y^{-2},yxy^{-1}x>$.
\end{itemize}

\subsubsection{Group ${\mathfrak S}_4$.}
\begin{itemize}
\item[$G$:] ${\mathfrak S}_4$;
\item[$t_i$:] $(3, 2^4)$ and $(4^2,3)$;
\item[$S_1$:] $(124),(23),(24),(14),(13)$; 
\item[$S_2$:] $(1243),(1234),(123)$;
\item[$H_1$:] $\ZZ_2^2 \times \ZZ_8$;
\item[$\pi_1$:] the fundamental group of this surface fits in an exact sequence
$$
1 \rightarrow \ZZ^2 \rightarrow \pi_1 \rightarrow \ZZ_8 \rightarrow 1
$$
and the normal subgroup of index $8$ of $\pi_1$ on the left has minimal index
among the normal subgroups of $\pi_1$ with free abelianization.
\end{itemize}

\subsubsection{Group ${\mathfrak S}_4 \times \ZZ_2$:} 
\begin{itemize}
\item[$G$:] $\langle (12), (13), (14), (56) \rangle < {\mathfrak S}_6$;
\item[$t_i$:] $(3,2^3)$ and $(6,4^2)$;
\item[$S_1$:] $(143),(12),(24)(56), (12)(34)(56)$;
\item[$S_2$:] $(134)(56), (1342)(56), (1234)$;
\item[$H_1$:] $\ZZ_2^2 \times \ZZ_8$;
\item[$\pi_1$:] the fundamental group  fits in an exact sequence
$$
1 \rightarrow \ZZ^2 \rightarrow \pi_1 \rightarrow \ZZ_8  \rightarrow 1
$$
and the normal subgroup of index $8$ of $\pi_1$ on the left has minimal index
among the normal subgroups of $\pi_1$ with free abelianization.
\end{itemize}

\subsubsection{Group ${\mathfrak S}_5$.}
\begin{itemize}
\item[$G$:] ${\mathfrak S}_5$;
\item[$t_i$:] $(6, 5, 2)$ and $(4^2,3)$;
\item[$S_1$:] $(13)(245),(14253),(34)$; 
\item[$S_2$:] $(4321),(1534),(235)$;
\item[$H_1$:] $\ZZ_8$;
\item[$\pi_1$:] $D_{8,5,-1}=<x,y|x^8,y^5,xyx^{-1}y>$.
\end{itemize}

\subsubsection{Group ${\mathfrak A}_5$.}
\begin{itemize}
\item[$G$:] ${\mathfrak A}_5$;
\item[$t_i$:] $(3, 2^3)$ and $(5^2,3)$;
\item[$S_1$:] $(152),(14)(23),(23)(45),(14)(25)$; 
\item[$S_2$:] $(15423),(13425),(254)$;
\item[$H_1$:] $\ZZ_2 \times \ZZ_{10}$;
\item[$\pi_1$:] $\ZZ_5 \times Q_8$, where $Q_8$ is the quaternion group $<x,y|x^4,x^2y^{-2},xyx^{-1}y>$.
\end{itemize}

\subsubsection{Group $\ZZ_2^4 \rtimes {\mathfrak S}_3$:} 
this is the semidirect product
obtained by letting (12) and (123) act on $\ZZ_2^4$ respectively as $\begin{pmatrix}
0&1\\
1&0
\end{pmatrix}
\oplus
\begin{pmatrix}
0&1\\
1&0
\end{pmatrix}$
and
$\begin{pmatrix}
0&1\\
1&1
\end{pmatrix}
\oplus
\begin{pmatrix}
0&1\\
1&1
\end{pmatrix}$

\begin{itemize}
\item[$G$:] 
$<x_1,x_2,x_3,x_4,y_2,y_3|x_i^2,y_i^i,[x_i,x_j],
(y_2y_3)^2, y_2x_{2i-1}y_2x_{2i},y_3^{-1}x_{2i-1}y_3x_{2i},$ \\$y_3^{-1}x_{2i}y_3x_{2i-1}x_{2i}>$;

\item[$t_i$:] $(3, 2^3)$ and $(4^2,3)$;
\item[$S_1$:] $y_3x_1, y_2y_3^2x_3, x_1x_3x_4, y_2y_3x_4$; 
\item[$S_2$:] $y_2x_1x_4,y_2y_3x_1,y_3^2x_1x_3$;
\item[$H_1$:] $\ZZ_2 \times \ZZ_8$;
\item[$\pi_1$:] the fundamental group of this surface fits in an exact sequences
$$
1 \rightarrow H \rightarrow \pi_1 \rightarrow D_{8,4,3}
\rightarrow 1
$$
where $H$ is a group with a complicated presentation whose abelian quotient is trivial.
We conjecture $H=\{1\}$ and $\pi_1=D_{8,4,3}$ but the computer could not solve the problem.
Recall that $D_{8,4,3}$ is the group $<x,y|x^8,y^4,xyx^{-1}y^{-3}>$.
\end{itemize}

\subsubsection{Group ${\mathfrak A}_5$.}
\begin{itemize}
\item[$G$:] ${\mathfrak A}_5$;
\item[$t_i$:] $(3, 2^3)$ and $(5^2,3)$;
\item[$S_1$:] $(152),(14)(23),(23)(45),(14)(25)$; 
\item[$S_2$:] $(14235),(15243),(123)$;
\item[$H_1$:] $\ZZ_2 \times \ZZ_{10}$;
\item[$\pi_1$:] $\ZZ_2 \times \ZZ_{10}$.
\end{itemize}

\subsection{$K^2=4$, basket $\{ 2 \times \frac15(1,2)\}$}

\subsubsection{Group ${\mathfrak A}_5$.}
\begin{itemize}
\item[$G$:] ${\mathfrak A}_5$;
\item[$t_i$:] $(5, 2^3)$ and $(5,3^2)$;
\item[$S_1$:] $(13245),(12)(34),(15)(23),(14)(35)$; 
\item[$S_2$:] $(13542),(123),(345)$;
\item[$H_1$:] $\ZZ_2 \times \ZZ_{6}$;
\item[$\pi_1$:] $\ZZ_2 \times \ZZ_{6}$.
\end{itemize}

\subsubsection{Group $\ZZ_2^4 \rtimes D_5$:} this is the semidirect product
obtained by letting a symmetry and a rotation of $D_5$ act on $\ZZ_2^4$ respectively as $\begin{pmatrix}
1&0&1&1\\
1&1&1&0\\
0&0&0&1\\
0&0&1&0
\end{pmatrix}$
and
$\begin{pmatrix}
1&0&0&1\\
1&1&0&0\\
0&1&1&0\\
0&0&1&0
\end{pmatrix}$. 

\begin{itemize}
\item[$G$:] 
\begin{eqnarray*}
<x_1,x_2,x_3,x_4,y_2,y_5|&x_i^2,y_i^i,[x_i,x_j],(y_2y_5)^2,\\
&y_2x_1y_2x_1x_2,y_2x_2y_2x_2,y_2x_3y_2x_1x_2x_4,y_2x_4y_2x_1x_3,\\
&y_5^{-1}x_1y_5x_1x_2,y_5^{-1}x_2y_5x_2x_3,y_5^{-1}x_3y_5x_3x_4,y_5^{-1}x_4y_5x_1>
\end{eqnarray*}
\item[$t_i$:] $(5, 4^2)$ and $(5, 4,2)$;
\item[$S_1$:] $y_5^2x_1,y_2y_5^2x_2x_4,y_2x_4$; 
\item[$S_2$:] $y_5x_2x_3,y_2y_5x_1x_2x_3x_4,y_2x_1x_3x_4$;
\item[$H_1$:] $\ZZ_8$;
\item[$\pi_1$:] the fundamental group fits in an exact sequences
$$
1 \rightarrow H \rightarrow \pi_1 \rightarrow \ZZ_8
\rightarrow 1.
$$

where $H$ is a group with a complicated presentation whose abelian quotient is trivial.
We conjecture $H=\{1\}$ and $\pi_1=\ZZ_8$ but the computer could not solve the problem.
\end{itemize}

\subsubsection{Group $\ZZ_2^4 \rtimes D_5$:} 

\begin{itemize}
\item[$G$:] as above;
\item[$t_i$:] $(5, 4^2)$ and $(5, 4,2)$;
\item[$S_1$:] $y_5^3x_1x_4,y_2x_3,y_2y_5^2x_2x_4$; 
\item[$S_2$:] $y_5^4x_1x_2x_3,y_2x_2x_4,y_2y_5$;
\item[$H_1$:] $\ZZ_8$;
\item[$\pi_1$:] the fundamental group fits in an exact sequences
$$
1 \rightarrow H \rightarrow \pi_1 \rightarrow \ZZ_8
\rightarrow 1.
$$

where $H$ is a group with a complicated presentation whose abelian quotient is trivial.
We conjecture $H=\{1\}$ and $\pi_1=\ZZ_8$ but the computer could not solve the problem.
\end{itemize}

\subsubsection{Group $\ZZ_2^4 \rtimes D_5$:} 
\begin{itemize}
\item[$G$:] as above;
\item[$t_i$:] $(5, 4^2)$ and $(5, 4,2)$;
\item[$S_1$:] $y_5^3x_1x_4,y_2x_3,y_2y_5^2x_2x_4$; 
\item[$S_2$:] $y_5x_2x_3,y_2y_5x_1x_2x_3x_4,y_2x_1x_3x_4$;
\item[$H_1$:] $\ZZ_8$;
\item[$\pi_1$:] the fundamental group fits in an exact sequences
$$
1 \rightarrow H \rightarrow \pi_1 \rightarrow \ZZ_8
\rightarrow 1.
$$

where $H$ is a group with a complicated presentation whose abelian quotient is trivial.
We conjecture $H=\{1\}$ and $\pi_1=\ZZ_8$ but the computer could not solve the problem.
\end{itemize}

\subsubsection{Group ${\mathfrak A}_6$.}
\begin{itemize}
\item[$G$:] ${\mathfrak A}_6$;
\item[$t_i$:] $(5,4,2)$ and $(5,3^2)$;
\item[$S_1$:] $(14623),(13)(2564),(12)(56)$; 
\item[$S_2$:] $(14562),(134)(265),(243)$;
\item[$H_1$:] $\ZZ_{6}$;
\item[$\pi_1$:] $\ZZ_{6}$.
\end{itemize}

\subsection{$K^2=3$, basket $\{ \frac15(1,1)+\frac15(1,4)\}$}

\subsubsection{Group ${\mathfrak A}_5$.}
\begin{itemize}
\item[$G$:] ${\mathfrak A}_5$;
\item[$t_i$:] $(5, 2^3)$ and $(5,3^2)$;
\item[$S_1$:] $(14235),(23)(45),(13)(45),(14)(35)$; 
\item[$S_2$:] $(13542),(123),(345)$;
\item[$H_1$:] $\ZZ_2 \times \ZZ_{6}$;
\item[$\pi_1$:] $\ZZ_2 \times \ZZ_{6}$.
\end{itemize}

\subsubsection{Group $\ZZ_2^4 \rtimes D_5$:} this is the semidirect product
obtained by letting a symmetry and a rotation of $D_5$ act on $\ZZ_2^4$ respectively as $\begin{pmatrix}
1&0&1&1\\
1&1&1&0\\
0&0&0&1\\
0&0&1&0
\end{pmatrix}$
and
$\begin{pmatrix}
1&0&0&1\\
1&1&0&0\\
0&1&1&0\\
0&0&1&0
\end{pmatrix}$. 

\begin{itemize}
\item[$G$:] \begin{eqnarray*}
<x_1,x_2,x_3,x_4,y_2,y_5|&x_i^2,y_i^i,[x_i,x_j],(y_2y_5)^2,\\
&y_2x_1y_2x_1x_2,y_2x_2y_2x_2,y_2x_3y_2x_1x_2x_4,y_2x_4y_2x_1x_3,\\
&y_5^{-1}x_1y_5x_1x_2,y_5^{-1}x_2y_5x_2x_3,y_5^{-1}x_3y_5x_3x_4,y_5^{-1}x_4y_5x_1>;
\end{eqnarray*}

\item[$t_i$:] $(5, 4^2)$ and $(5, 4,2)$;
\item[$S_1$:] $y_5^2x_1,y_2y_5^2x_2x_4,y_2x_4$; 
\item[$S_2$:] $y_5^3x_1x_3,y_2y_5^3x_4,y_2x_1x_3x_4$;
\item[$H_1$:] $\ZZ_8$;
\item[$\pi_1$:] the fundamental group fits in an exact sequences
$$
1 \rightarrow H \rightarrow \pi_1 \rightarrow \ZZ_8
\rightarrow 1.
$$

where $H$ is a group with a complicated presentation whose abelian quotient is trivial.
We conjecture $H=\{1\}$ and $\pi_1=\ZZ_8$ but the computer could not solve the problem.
\end{itemize}

\subsubsection{Group $\ZZ_2^4 \rtimes D_5$:} 

\begin{itemize}
\item[$G$:] as above;
\item[$t_i$:] $(5, 4^2)$ and $(5, 4,2)$;
\item[$S_1$:] $y_5^3x_1x_4,y_2x_3,y_2y_5^2x_2x_4$; 
\item[$S_2$:] $y_5^3x_2x_4,y_2y_5^2x_1x_4,y_2y_5^4x_1x_2x_4$;
\item[$H_1$:] $\ZZ_8$;
\item[$\pi_1$:] the fundamental group fits in an exact sequences
$$
1 \rightarrow H \rightarrow \pi_1 \rightarrow \ZZ_8
\rightarrow 1.
$$
where $H$ is a group with a complicated presentation whose abelian quotient is trivial.
We conjecture $H=\{1\}$ and $\pi_1=\ZZ_8$ but the computer could not solve the problem.
\end{itemize}

\subsubsection{Group $\ZZ_2^4 \rtimes D_5$:} 

\begin{itemize}
\item[$G$:] as above;
\item[$t_i$:] $(5, 4^2)$ and $(5, 4,2)$;
\item[$S_1$:] $y_5^3x_1x_4,y_2x_3,y_2y_5^2x_2x_4$; 
\item[$S_2$:] $y_5^3x_1x_3,y_2y_5^3x_4,y_2x_1x_3x_4$;
\item[$H_1$:] $\ZZ_8$;
\item[$\pi_1$:] the fundamental group fits in an exact sequences
$$
1 \rightarrow H \rightarrow \pi_1 \rightarrow \ZZ_8
\rightarrow 1.
$$
where $H$ is a group with a complicated presentation whose abelian quotient is trivial.
We conjecture $H=\{1\}$ and $\pi_1=\ZZ_8$ but the computer could not solve the problem.
\end{itemize}

\subsubsection{Group ${\mathfrak A}_6$.}
\begin{itemize}
\item[$G$:] ${\mathfrak A}_6$;
\item[$t_i$:] $(5, 4,2)$ and $(5,3^2)$;
\item[$S_1$:] $(14623),(13)(2564),(12)(56)$; 
\item[$S_2$:] $(15342),(164),(135)(246)$;
\item[$H_1$:] $\ZZ_{6}$;
\item[$\pi_1$:] $\ZZ_{6}$.
\end{itemize}

\subsection{$K^2=3$, basket $\{ 2 \times \frac12(1,1) + \frac13(1,1) +\frac13(1,2)\}$}
\subsubsection{Group ${\mathfrak S}_4 \times \ZZ_2$:} 

\begin{itemize}
\item[$G$:] $\langle (12), (13), (14), (56) \rangle < {\mathfrak S}_6$;
\item[$t_i$:] $(4, 3, 2^2)$ and $(6,4,2)$;
\item[$S_1$:] $(1234),(234),(13)(24)(56),(34)(56)$;
\item[$S_2$:] $(234)(56),(4321)(56),(14)$;
\item[$H_1$:] $\ZZ_2 \times \ZZ_{4}$;
\item[$\pi_1$:] $\ZZ_2 \times \ZZ_{4}$.
\end{itemize}

\subsection{$K^2=2$, basket $\{ 2 \times \frac13(1,1) + 2 \times \frac13(1,2)\}$}

\subsubsection{Group $ {\mathfrak A}_4 \times {\mathbb Z}_2$:}
\begin{itemize}
\item[$G$:] $\langle (123),(12)(34),(56) \rangle < {\mathfrak S}_6$;
\item[$t_i$:] $(6^2,2)$ and $(3^2, 2^2)$;
\item[$S_1$:] $(132)(56),(142)(56),(13)(24)$;
\item[$S_2$:] $(234), (123), (13)(24)(56),(14)(23)(56)$;
\item[$H_1$:] $\ZZ_2^2$;
\item[$\pi_1$:] $Q_8$.
\end{itemize}

\subsubsection{Group $ {\mathfrak S}_4$:}
\begin{itemize}
\item[$G$:] ${\mathfrak S}_4$;
\item[$t_i$:] $(4^2,3)$ and $(3^2, 2^2)$;
\item[$S_1$:] $(123), (134), (12), (24)$;
\item[$S_2$:] $(1234), (1243), (124)$;
\item[$H_1$:] $\ZZ_8$;
\item[$\pi_1$:] $\ZZ_8$.
\end{itemize}

\subsubsection{Group $ \ZZ_5^2 \rtimes \ZZ_3$:} this is the semidirect product
obtained by letting a generator of $\ZZ_3$ act on $\ZZ_5^2$ as $\begin{pmatrix}
1&1\\
2&3
\end{pmatrix}$. 
\begin{itemize}
\item[$G$:] $\langle x_1,x_2,y| x_i^5,[x_1,x_2],y^3,y^{-1}x_1^{-1}yx_1x_2^2,y^{-1}x_2^{-1}yx_1x_2^3\rangle $;
\item[$t_i$:] both $(5, 3^2)$;
\item[$S_1$:] $x_1^3x_2^2,y^2x_1^3x_2^4,y$;
\item[$S_2$:] $x_1^3,yx_1,y^2x_1^4x_2^2$;
\item[$H_1$:] $\ZZ_5$;
\item[$\pi_1$:] the fundamental group fits in an exact sequences
$$
1 \rightarrow H \rightarrow \pi_1 \rightarrow \ZZ_5
\rightarrow 1.
$$
where $H$ is a group with a complicated presentation whose abelian quotient is trivial.
We conjecture $H=\{1\}$ and $\pi_1=\ZZ_5$ but the computer could not solve the problem.
\end{itemize}

\subsubsection{Group $ \ZZ_5^2 \rtimes \ZZ_3$:} 
\begin{itemize}
\item[$G$:] as above
\item[$t_i$:] both $(5, 3^2)$;
\item[$S_1$:] $x_1^3x_2^2,y^2x_1^3x_2^4,y$;
\item[$S_2$:] $x_1^4x_2^3,yx_1x_2,y^2x_1^4x_2^3$;
\item[$H_1$:] $\ZZ_5$;
\item[$\pi_1$:] the fundamental group fits in an exact sequences
$$
1 \rightarrow H \rightarrow \pi_1 \rightarrow \ZZ_5
\rightarrow 1.
$$
where $H$ is a group with a complicated presentation whose abelian quotient is trivial.
We conjecture $H=\{1\}$ and $\pi_1=\ZZ_5$ but the computer could not solve the problem.
\end{itemize}

\subsubsection{Group ${\mathfrak A}_5$.}
\begin{itemize}
\item[$G$:] ${\mathfrak A}_5$;
\item[$t_i$:] $(5, 3^2)$ and $(3,2^3)$;
\item[$S_1$:] $(13542),(123),(345)$; 
\item[$S_2$:] $(152),(14)(23),(23)(45),(14)(25)$;
\item[$H_1$:] $\ZZ_2^2$;
\item[$\pi_1$:] $\ZZ_2^2$.
\end{itemize}

\subsection{$K^2=2$, basket $\{ 2 \times \frac12(1,1) + \frac14(1,1) + \frac14(1,3)\}$}

\subsubsection{Group $PSL(2,7)$:} 
\begin{itemize}
\item[$G$:] $\langle (34)(56),(123)(457) \rangle < {\mathfrak S}_7$;
\item[$t_i$:] $(7,4,2)$ and $(4, 3^2)$;
\item[$S_1$:] (1436275),(14)(2357),(36)(45);
\item[$S_2$:](1236)(47), (245)(376), (164)(257);
\item[$H_1$:] $\ZZ_3$;
\item[$\pi_1$:] $\ZZ_3$.
\end{itemize}

\subsubsection{Group $PSL(2,7)$:} 
\begin{itemize}
\item[$G$:] $\langle (34)(56),(123)(457) \rangle < {\mathfrak S}_7$;
\item[$t_i$:] $(7,4,2)$ and $(4, 3^2)$;
\item[$S_1$:] (1436275),(14)(2357),(36)(45);
\item[$S_2$:](34)(1675), (164)(257), (134)(265);
\item[$H_1$:] $\ZZ_3$;
\item[$\pi_1$:] $\ZZ_3$.
\end{itemize}

\subsubsection{Group ${\mathfrak A}_6$.}

\begin{itemize}
\item[$G$:] ${\mathfrak A}_6$;
\item[$t_i$:] $(5, 4,2)$ and $(4,3^2)$;
\item[$S_1$:] $(14623),(13)(2564),(12)(56)$; 
\item[$S_2$:] $(16)(2435),(246),(162)(345)$;
\item[$H_1$:] $\ZZ_3$;
\item[$\pi_1$:] $\ZZ_3$.
\end{itemize}

\subsubsection{Group ${\mathfrak A}_6$.}
\begin{itemize}
\item[$G$:] ${\mathfrak A}_6$;
\item[$t_i$:] $(5, 4,2)$ and $(4,3^2)$;
\item[$S_1$:] $(14623),(13)(2564),(12)(56)$; 
\item[$S_2$:] $(1365)(24),(124)(356),(125)$;
\item[$H_1$:] $\ZZ_3$;
\item[$\pi_1$:] $\ZZ_3$.
\end{itemize}

\subsubsection{Group ${\mathfrak S}_5$.}
\begin{itemize}
\item[$G$:] ${\mathfrak S}_5$;
\item[$t_i$:] $(5, 4,2)$ and $(6,4,3)$;
\item[$S_1$:] $(15432),(1235),(45)$; 
\item[$S_2$:] $(15)(234),(2453),(153)$;
\item[$H_1$:] $\ZZ_3$;
\item[$\pi_1$:] $\ZZ_3$.
\end{itemize}

\subsubsection{Group ${\mathfrak S}_5$.}
\begin{itemize}
\item[$G$:] ${\mathfrak S}_5$;
\item[$t_i$:] $(5, 4,2)$ and $(6,4,3)$;
\item[$S_1$:] $(15432),(1235),(45)$; 
\item[$S_2$:] $(14)(235),(1254),(432)$;
\item[$H_1$:] $\ZZ_3$;
\item[$\pi_1$:] $\ZZ_3$.
\end{itemize}

\subsection{$K^2=1$, basket $\{ 4 \times \frac12(1,1) + \frac13(1,1) + \frac13(1,2)\}$}

\subsubsection{Group ${\mathfrak S}_5$.}
\begin{itemize}
\item[$G$:] ${\mathfrak S}_5$;
\item[$t_i$:] $(3,2^3)$ and $(4^2,3)$;
\item[$S_1$:] $(123),(34),(23),(13)(24$; 
\item[$S_2$:] $(1234),(1243),(124)$;
\item[$H_1$:] $\ZZ_4$;
\item[$\pi_1$:] $\ZZ_4$.
\end{itemize}

\subsubsection{Group $PSL(2,7)$:}

\begin{itemize}
\item[$G$:] $\langle (34)(56),(123)(457) \rangle < {\mathfrak S}_7$;
\item[$t_i$:] $(7,3,2)$ and $(4^2, 3)$;
\item[$S_1$:] $(1476532),(164)(235),(26)(47)$;
\item[$S_2$:]$(1765)(23),(17)(3645),(236)(475)$;
\item[$H_1$:] $\ZZ_2$;
\item[$\pi_1$:] $\ZZ_2$.
\end{itemize}

\subsubsection{Group ${\mathfrak S}_4 \times \ZZ_2$:}

\begin{itemize}
\item[$G$:] $\langle (12), (13), (14), (56) \rangle < {\mathfrak S}_6$;
\item[$t_i$:] $(3,2^3)$ and $(6,4,2)$;
\item[$S_1$:] $(134),(13)(24)(56),(23),(24)(56)$;
\item[$S_2$:] $(143)(56),(1234)(56),(23)$;
\item[$H_1$:] $\ZZ_2 $;
\item[$\pi_1$:] $\ZZ_2$.
\end{itemize}

\section{The Magma Script}

\begin{footnotesize}
\begin{verbatim}

// We first need to find, for each K^2, what are the possible baskets of
// singularities. By Lemma 1.8 the sum of the invariants B of the
// singularities must equal 3(8-K^2).
//
// We will represent a singular point 1/n(1,a) by the rational number
// a/n; hence a basket of singularities will be a multiset of rational
// numbers. Remember that cyclic quotient singularities 1/n(1,a) and
// 1/n(1,a') are isomorphic if a*a'=1 mod n, so we must consider rational
// numbers in (0,1) modulo the equivalence relation a/n~a'/n.
//
// The invariant B of a singularity 1/n(1,a) equals (a+a')/n+sum(b_i),
// where b_i are the entries of the continuous fraction of n/a: we see
// them as the sequence [b_1,...,b_r].  Note that the continuous
// fraction of n/a' is the "reversed" sequence [b_r,...,b_1].
//
// This can be seen as a bijection between rational numbers in (0,1)
// and sequences of integers strictly bigger than 1.
// We make this bijiection explicit by the following scripts.

ContFrac:=function(s)
  CF:=[ ]; r:=1/s;
  while not IsIntegral(r) do
    Append(~CF, Ceiling(r)); r:=1/(Ceiling(r)-r);
  end while;
  return Append(CF, r);
end function;

Nq:=func<cf|#cf eq 1 select cf[1] else cf[1]-1/$$(Remove(cf,1))>;

RatNum:=func<seq|1/Nq(seq)>;

// "Wgt" computes the weight of a sequence, i.e., the sum of its
// entries. It bounds strictly from below B of the corresponding
// singular point.

Wgt:=function(seq)
  w:=0; for i in seq do w+:=i; end for; return w;
end function;

// The next script computes all rational number whose continuous
// fraction has small weight, by listing all sequences (modulo
// "reverse") and storing the corresponding rational number.

RatNumsWithSmallWgt:=function(maxW)
  S:={ }; T:={}; setnums:={RationalField()| };
  for i in [2..maxW] do Include(~S, [i]); end for;
  for i in [1..Floor(maxW/2)-1] do
  for seq in S do
    if #seq eq i then
    if maxW-Wgt(seq) ge 2 then
    for k in [2..maxW-Wgt(seq)] do
     Include(~S,Append(seq, k));
    end for; end if; end if;
  end for; end for;
  for seq in S do
  if Reverse(seq) notin T then Include(~T,seq);
  end if; end for;
  for seq in T do Include(~setnums, RatNum(seq)); end for;
  return setnums;
end function;

// The next two scripts compute the invariants B and e of a rational
// number (i.e., of the corresponding singular point).

InvB:=func<r|Wgt(ContFrac(r))+r+RatNum(Reverse(ContFrac(r)))>;

Inve:=func<r|#ContFrac(r)+1-1/Denominator(RationalField()!r)>;

// The next two scripts compute the invariants B and e of a multiset
// of rational numbers (corresponding to a basket of singular points).

InvBSet:= function(basket)
  B:=0; for r in basket do B+:=InvB(r); end for; return B;
end function;

InveSet:= function(basket)
  e:=0; for r in basket do e+:=Inve(r); end for; return e;
end function;

// Here is the invariant k of the basket:

Invk:=func<r|InvBSet(r)-2*InveSet(r)>;

// The next script computes all rational numbers with weight bounded
// from above by maxW, as computed by RatNumsWithSmallWgt, and returns
// them in a sequence ordered by the value of their invariant B,
// starting from the one with biggest B.

OrderedRatNums:=function(maxW)
   seq:=[RationalField()| ]; seqB:=[RationalField()| ];
   set:=RatNumsWithSmallWgt(Floor(maxW));
     for r in set do i:=1;
       for s in seqB do
       if s gt InvB(r) then i+:=1;
       else break s;
       end if; end for;
     Insert(~seq, i, r); Insert(~seqB, i, InvB(r));
     end for;
  return seq;
end function;

// The next one, CutSeqByB, takes a sequence "seq" and recursively
// removes the first element if its invariant B is at least maxB.

CutSeqByB:=function(seq,maxB)
  Seq:=seq;
  while #Seq ge 1 and InvB(Seq[1]) gt maxB do Remove(~Seq,1); end while;
  return Seq;
end function;

// Now we have a way to compute the set of rationals with B bounded by
// the integer maxB, ordered by B:
// CutSeqByB(OrderedRatNums(maxB-1),maxB)
//
// The next script takes a sequence of rationals ordered by B
// and computes the baskets with invariant exactly B that use only these
// rationals.
// The function is as follows:
// -- first remove the elements with B too big to be in a basket
// -- then take the first element, say r, if B(r)=B, store {* r *}
// -- else attach it to each basket with invariant B-B(r)
//    (computed recalling the function with the same sequence)
//     and store the result
// -- now we have all baskets containing r: remove r from the sequence
//    and repeat the procedure until the sequence is empty

BasketsWithSeqAndB:=function(seq,B)
  ratnums:=CutSeqByB(seq,B); baskets:={ };
  while #ratnums gt 0 do
    bigguy:=ratnums[1];
    if InvB(bigguy) eq B then Include(~baskets,{* bigguy *}); else
    for basket in $$(ratnums, B-InvB(bigguy)) do
      Include(~baskets, Include(basket, bigguy));
    end for; end if;
    Remove(~ratnums,1);
  end while;
  return baskets;
end function;

// Now we can compute all Baskets with a given B:

BasketsWithSmallB:=func<B|
                   BasketsWithSeqAndB(OrderedRatNums(Ceiling(B)-1),B)>;

// We do not need all these baskets, since most of them violate the Lemma 1.7.
// The next two scripts take care of this: "TestBasket" will check if a basket
// violates Lemma 1.7; "Basket" will take the output of BasketsWithSmallB and
// removes all the baskets which violate the condition.

TestBasket:=function(basket)
  firstseq:=[];
  for r in basket do Append(~firstseq,r); end for;
  setofseqs:={ firstseq };
  for i in [1..#firstseq] do newseqs:={};
    for seq in setofseqs do
      Include(~newseqs,
             Insert(Remove(seq,i),i,RatNum(Reverse(ContFrac(seq[i])))));
    end for;
    setofseqs:=setofseqs join newseqs;
  end for;
  test:=false;
  for seq in setofseqs do
    if IsIntegral(Wgt(seq)) then test:=true;
    end if;
  end for;
  return test;
end function;

Baskets:=function(B)
  baskets:={ };
  for basket in BasketsWithSmallB(B) do
    if TestBasket(basket) then Include(~baskets, basket);
    end if;
  end for;
  return baskets;
end function;

// Now we have found, for each K^2, a finite and rather small number of
// possible baskets. The next step is to restrict, for each basket, to finitely
// many signatures. We will represent a signature as the multiset of naturals 
// {* m_i *}.
//
// We first define the index of a basket of singularities as the lowest 
// common multiple of the indices of the singularities

GI:=func<r|Denominator(r)/GCD(Numerator(r)+1,Denominator(r))>;

GorInd:= function(bas)
  I:=1; 
  for r in bas do I:=LCM(IntegerRing()!I,IntegerRing()!GI(r)); end for;
  return I;
end function;

// We need moreover the invariant Theta of a signature

Theta:=function(type)
  t:=-2; for n in type do t+:=1-1/n; end for;
  return t;
end function;

// The input of the next program are 4 numbers, CardBasket, Length, SBound and
// HBound (SBound<=HBound), and its output are all signatures with 
// #signature=Length such that (for C:=max(1/6,(Length-3)/2)
// 1) each m_i is smaller than HBound/C;
// 2) most m_i are smaller than SBound/C, the number of exceptions
//    being bounded from above by half of CardBasket.
// For sparing time, the script first checks if the length is smaller
// than the number of possible exceptions to 2, in which case only the
// inequality 1 is to consider.

CandTypes:=function(CardBasket,Length,SBound,HBound)
  C:=Maximum(1/6,(Length-3)/2); S:=Floor(SBound/C); H:=Floor(HBound/C);
  Exc:=Floor(CardBasket/2);
  if Length le Exc then Types:=Multisets({x: x in [2..H]},Length);
  else Types:=Multisets({x: x in [2..S]},Length);
    for k in [1..Exc] do
    for TypeBegin in Multisets({x: x in [2..S]},Length-k) do
    for TypeEnd in Multisets({x: x in [S+1..H]},k) do
      Include(~Types, TypeBegin join TypeEnd);
    end for; end for; end for;
  end if;
  return Types;
end function;

// The next script, ListOfTypesBas, finds all signatures compatible with the
// basket in the input (i.e., which respect Proposition 1.11).
// We use
// 1) Theta<= maxTh:=(K^2+k)/4 (follows from 1.11.a),
// 2) #signature<= 2*Theta+4 (follows from the definition of Theta).

ListOfTypesBas:=function(basket)
   S:={ }; B:=InvBSet(basket); k:=Invk(basket); I:=GorInd(basket);
   Ksquare:=8-B/3; maxTh:=(Ksquare+k)/4;
   for h in [3..Floor(2*maxTh+4)] do
   for cand in CandTypes(#basket,h,maxTh+1,2*I*maxTh+1) do
         T:=Theta(cand);
   if T le maxTh then
   if T gt 0 then Alpha:=maxTh/T;
   if Alpha in IntegerRing() then
   if forall{n : n in cand | 2*Alpha*I/n in IntegerRing()} then bads:=0;
     for n in cand do
     if Alpha/n notin IntegerRing() then bads +:=1;
     end if; end for;
   if bads le #basket/2 then Include(~S,cand);
   end if; end if; end if; end if; end if; end for; end for;
 return S;
end function;

// Finally, we can conlude the second step, by writing a script which
// lists, for given K^2, all possible baskets (by using Baskets) and for
// each basket all possible signatures (by using ListOfTypesBas)

ListOfTypes:=function(Ksquare)
  S:=[* *];
  for basket in Baskets(3*(8-Ksquare)) do L:=ListOfTypesBas(basket);
  if not IsEmpty(L) then Append(~S,[* basket, L *]);
  end if; end for;
 return S;
end function;

// Now we are left with the last step: for each basket we need to
// consider all pairs of possible signatures and look for groups of the
// correct order which have two sets of spherical generators of these
// signatures which give a surface with the prescribed basket of
// singularities. First we need to write some command which is not
// implemented in MAGMA.

// This extracts from a finite group the set of elements of a certain
// order.

ElsOfOrd:=function(group,order)
  Els:={ };
  for g in group do if Order(g) eq order then Include(~Els, g);
  end if; end for;
  return Els;
end function;

// TuplesOfGivenOrder creates a sequence of the same length as the input
// sequence seq, whose entries are subsets of the group in the input,
// and precisely the subsets of elements of order the corresponding
// entry of seq

TuplesOfGivenOrders:=function(group,seq)
  SEQ:=[];
  for i in [1..#seq] do
    if IsEmpty(ElsOfOrd(group,seq[i])) then SEQ:=[]; break i;
    else Append(~SEQ,ElsOfOrd(group,seq[i]));
    end if;
  end for;
  return SEQ;
end function;

// This two transform a multiset, resp. a tuple, into a sequence.

TypeToSeq:=function(type)
   seq:=[ ]; t:=type;
   while #t ne 0 do Append(~seq, Maximum(t)); 
                    Exclude(~t, Maximum(t));
   end while;
  return seq;
end function;

TupleToSeq:=function(tuple)
  seq:=[];
    for elt in Tuplist(tuple) do
      Append(~seq,elt);
    end for;
  return seq;
end function;

// This script checks if a group has a set of spherical generators of
// the prescribed signature.

ExSphGens:=function(group,type)
  test:=false; seq:=TypeToSeq(type);
  SetCands:=TuplesOfGivenOrders(group,Prune(seq));
  if not IsEmpty(SetCands) then
  for cands in CartesianProduct(SetCands) do
    if Order(&*cands) eq seq[#seq] then
    if sub<group|TupleToSeq(cands)> eq group then
       test:=true; break cands;
    end if; end if;
  end for; end if;
  return test;
end function;

// The next script runs a systematic search on all finite groups and
// produces the list of all triples (basket, pair of signatures, group)
// such that
// 1) the basket is compatible with the input K^2;
// 2) the signatures are compatible with the basket;
// 3) the group has order (K^2+k)/(2*Theta_1*Theta_2) (see 1.11.b)
//    and sets of spherical generators of both signatures.
// If one of the signatures is {*2,3,7*} the group must be perfect, so
// in this case the program first checks if there are perfect groups of
// the right order: if the answer is negative it jumps directly to the
// next case.
// The program skips to check the groups of order bigger than 2000, 1024
// (since there is no complete list avalaible) or of orders in the set
// "badorders" which can be chosen by the user.
// These skipped cases are listed in the second output, and must be
// considered separately.

ListGroups:=function(Ksquare: badorders:={256,512,768,1152,1280,
                                          1536,1728,1792,1920})
  checked:=[* *]; tocheck:=[* *];
  for pair in ListOfTypes(Ksquare) do
    basket:=pair[1]; types:=pair[2]; k:=Invk(basket);
  for pairoftypes in Multisets(types,2) do ord:=(Ksquare+k)/2;
    for T in pairoftypes do ord:=ord/Theta(T);
    end for;
  if IsIntegral(ord) then
  if {*2,3,7*} in pairoftypes and
     NumberOfGroups(PerfectGroupDatabase(),IntegerRing()!ord) eq 0
       then ;
  elif ord gt 2000 or ord in Include(badorders,1024) then
     Append(~tocheck, [* basket, pairoftypes, ord *]);
  else for G in SmallGroups(IntegerRing()!ord: Warning := false) do
    test:=true;
    for T in pairoftypes do
       if not ExSphGens(G,T) then test:=false; break T;
       end if;
    end for;
    if test then Append(~checked, [* basket, pairoftypes, G *]);
    end if; end for;
  end if; end if; end for; end for;
 return checked, tocheck;
end function;


// Each case in the first output of ListGroups(K^2) gives at least a
// surface, but we are interested only in those surfaces having the
// prescribed basket of singularities. The next goal then is to compute
// these singularities.
//
// The next script takes a sequence of elements of a group and a further
// element g and conjugates each element of the sequence with g.

Conjug:=function(seq,elt)
  output:=[];
   for h in seq do Append(~output,h^elt);
   end for;
  return output;
end function;

// The next program computes all possible sets of spherical generators
// of a group of a prescribed signature and returns (to spare memory) only
// one of these sets for each conjugacy class.

SphGenUpToConj:=function(group,type)
  Set:={ }; Rep:={ }; seq:=TypeToSeq(type);
  SetCands:=TuplesOfGivenOrders(group,Prune(seq));
  if not IsEmpty(SetCands) then
  for cands in CartesianProduct(SetCands) do
    if TupleToSeq(cands) notin Set then
    if Order(&*cands) eq seq[#seq] then
    if sub<group|TupleToSeq(cands)> eq group then
       Include(~Rep, Append(TupleToSeq(cands),(&*cands)^-1));
       for g in group do Include(~Set, Conjug(TupleToSeq(cands),g));
       end for;
    end if; end if; end if;
  end for; end if;
  return Rep;
end function;

// Given two sets of spherical generators, the singular points of the
// resulting surface are the image of points in the product of curves
// C_1xC_2 having nontrivial stabilizer. These correspond to pairs
// (g_1,n_1,g_2,n_2) where
// - g_1 is a generator of the first set;
// - g_2 is a generator of the second set;
// 1<=n_1<=ord(g_1); 1<=n_2<=ord(g_2); g_1^n_1=g_2^n_2
// First we write a program which computes the singular points
// coming from a fixed pair (g1,g2).

BasketByAPairOfGens:=function(group,gen1,gen2)
  basket:={* *}; RC:={ }; delta:=GCD(Order(gen1),Order(gen2));
  alpha1:=IntegerRing()!(Order(gen1)/delta);
  alpha2:=IntegerRing()!(Order(gen2)/delta);
  RC2,f2:=RightTransversal(group,sub<group | gen2 >);
  for g2 in RC2 do test:=true;
    for g in sub<group| gen1 > do
    if f2(g2*g) in RC then test:=false; break g;
    end if; end for;
  if test then Include(~RC, g2);
  end if; end for;
  for g in RC do
  for d1 in [1..delta-1] do
  for d2 in [1..delta-1] do
  if (gen1^(d1*alpha1)) eq (gen2^(d2*alpha2))^g then
     Include(~basket,d2/delta); break d1;
  end if; end for; end for; end for;
 return basket;
end function;

// We could use it to compute the basket of singularities of every
// constructed surface, but this is too expensive for our purposes.
// The next program only checks if, given two sets of spherical
// generators and a "candidate" basket, the resulting surface has the
// prescribed basket. The advantage is that in the wrong cases, the
// script stops when it finds a "forbidden" singularities, without
// losing time computing all the other singular points.

CheckSings:=function(basket,gens1,gens2,group)
  test:=true; bas:=basket;
    for gen1 in gens1 do
    for gen2 in gens2 do pb:=BasketByAPairOfGens(group,gen1,gen2);
      for r in pb do r1:=RatNum(Reverse(ContFrac(r)));
      if r in bas then Exclude(~bas,r);
         elif r1 in bas then Exclude(~bas,r1);
         else test:=false; break gen1;
      end if; end for;
    end for; end for;
  return test and IsEmpty(bas);
end function;

// The next script computes all product-quotient surfaces
// with p_g=0, chi=1 and given K^2. It has the same input as ListGroups,
// K^2 and the bad orders (BO), so it does not treat the cases not
// treated by ListGroups, which must be treated separately.

ExistingSurfaces:=function(Ksquare: BO:={256,512,768,1152,1280,
                                          1536,1728,1792,1920})
  M:=[* *];
   for triple in ListGroups(Ksquare: badorders:=BO) do
     basket:=triple[1]; pairsoftypes:=triple[2]; 
     group:=triple[3]; Types:=[]; 
     for type in pairsoftypes do Include(~Types,type); end for;
     SetGens1:=SphGenUpToConj(group,Types[1]);
     if #Types eq 1 then SetGens2:=SetGens1;
                    else SetGens2:=SphGenUpToConj(group,Types[2]);
     end if;
     test:=false;
       for gens1 in SetGens1 do
       for gens2 in SetGens2 do
         if CheckSings(basket,gens1,gens2,group) then test:=true; 
                                                      break gens1;
         end if;
       end for; end for;
       if test then
         Append(~M, [* basket,pairsoftypes,IdentifyGroup(group)*]);
       end if;
     end for;
  return M;
end function;

// We still have not found all possible surfaces. In fact the output of
// ExistingSurfaces(n) gives all possible triples 
// (basket,pair of signatures, group) which give AT LEAST a surface with
// p_g=0 and K^2=n, but there could be more than one. In fact, there are 
// more than one surface for each pair of spherical generators of the
// prescribed types which pass the singularity test, but they are often 
// isomorphic. More precisely, they are isomorphic if the pair of
// spherical generators are equivalent for the equivalence relation 
// generated by Hurwitz moves (on each set of generators separately) 
// and the automorhisms of the group (on both sets simultaneously).
// We need to construct orbits for this equivalence relation.
// The next scripts creates the Automorphism Group of a group as an
// explicit set.

AutGr:=
  function(gr)
    Aut:=AutomorphismGroup(gr); A:={ Aut!1 };
    repeat
      for g1 in Generators(Aut) do
      for g2 in A do
      Include (~A,g1*g2);
      end for; end for;
    until  #A eq #Aut;
  return A;
end function;

// The next script creates the Hurwitz move.

HurwitzMove:=
  function(seq,idx)
  return Insert(Remove(seq,idx),idx+1,seq[idx]^seq[idx+1]);
end function;

// This script, starting from a sequence of elements of a group,
// creates all sequences of elements which are equivalent to the given
// one for the equivalence relation generated by Hurwitz moves,
// and returns (to spare memory) only the ones whose entries have never
// increasing  order.

HurwitzOrbit:=
  function(seq)
  orb:={ }; shortorb:={  }; Trash:={ seq };
    repeat
      ExtractRep(~Trash,~gens); Include(~orb, gens);
        for k in [1..#seq-1] do newgens:=HurwitzMove(gens,k);
        if newgens notin orb then Include(~Trash, newgens);
        end if; end for;
    until IsEmpty(Trash);
    for gens in orb do test:=true;
      for k in [1..#seq-1] do
        if Order(gens[k]) lt Order(gens[k+1]) then test:=false; break k;
        end if;
      end for;
      if test then Include(~shortorb, gens);
      end if;
    end for;
  return shortorb;
end function;

// Now we create all sets of spherical generators of a group of a
// prescribed signature.

SphGens:=function(group,seq)
  Gens:={ }; SetCands:=TuplesOfGivenOrders(group,Prune(seq));
  if not IsEmpty(SetCands) then
  for cands in CartesianProduct(SetCands) do
    if Order(&*cands) eq seq[#seq] then
    if sub<group|TupleToSeq(cands)> eq group then
      Include(~Gens, cands);
    end if; end if;
  end for; end if;
  return Gens;
end function;

// Finally, we can find all surfaces. The next program finds all
// surfaces with a given group, pair of signatures and basket (must be run 
// on the outputs of ExistingSurfaces).

FindSurfaces:=function(basket, pairoftypes, gr)
  Good:={@ @}; Surfaces:={ }; All:={ }; Aut:=AutGr(gr); Types:=[];
  for type in pairoftypes do Append(~Types, type);
  end for;
  seq1:=TypeToSeq(Types[1]); seq2:=TypeToSeq(Types[2]);
  NumberOfCands:=#SphGens(gr,seq1)*#SphGens(gr,seq2);
    for gens1 in SphGens(gr,seq1) do genseq1:=TupleToSeq(gens1);
    for gens2 in SphGens(gr,seq2) do genseq2:=TupleToSeq(gens2);
      if genseq1 cat genseq2 notin All then
        Include(~Surfaces, [Append(genseq1,(&*gens1)^-1),
                            Append(genseq2,(&*gens2)^-1)]);
        orb1:=HurwitzOrbit(Append(genseq1,(&*gens1)^-1));
        orb2:=HurwitzOrbit(Append(genseq2,(&*gens2)^-1));
        for g1 in orb1 do gg1:=Prune(g1);
        for g2 in orb2 do gg2:=Prune(g2);
          if gg1 cat gg2 notin All then
            for phi in Aut do Include(~All, phi(gg1 cat gg2));
            end for;
          end if;
          if #All eq NumberOfCands then break gens1;
          end if;
        end for; end for;
      end if;
    end for; end for;
    for gens in Surfaces do
    if CheckSings(basket,gens[1],gens[2],gr) then
      Include(~Good, gens);
    end if; end for;
  return Good;
end function;

// The next script, FindCurves, uses the same argument of FindSurfaces to find 
// all curves with a given signature and group, modulo Hurwitz moves and inner 
// automorphisms of the group.

FindCurves:=function(type, gr)
  Curves:={ }; All:={ }; seq:=TypeToSeq(type);
  NumberOfCands:=#SphGens(gr,seq); 
    for gens in SphGens(gr,seq) do genseq:=TupleToSeq(gens);
      if genseq notin All then
        Include(~Curves, Append(genseq,(&*gens)^-1));
        orb:=HurwitzOrbit(Append(genseq,(&*gens)^-1));
        for g in orb do gg:=Prune(g);
          if gg notin All then
             for h in gr do Include(~All, Conjug(gg,h));
            end for;
          end if;
          if #All eq NumberOfCands then break gens;
          end if;
        end for;
      end if;
    end for;
  return Curves;
end function;

PolyGroup:=function(seq,gr)
  F:=FreeGroup(#seq); R:={F![1..#seq]};
  for i in [1..#seq] do
    Include(~R,F.i^Order(seq[i]));
  end for;
  P:=quo<F|R>;
  return P, hom<P->gr|seq>;
end function;

DirProd:=function(G1,G2)
  G1xG2:=DirectProduct(G1,G2); vars:=[];
  n:=[NumberOfGenerators(G1),NumberOfGenerators(G2)];
   for i in [1..Wgt(n)] do Append(~vars,G1xG2.i); end for;
   SplittedVars:=Partition(vars,n);
   injs:=[hom< G1->G1xG2 | SplittedVars[1]>,
          hom< G2->G1xG2 | SplittedVars[2]>];
   vars1:=[]; vars2:=[];
   for i in [1..n[1]] do 
     Append(~vars1,G1.i); Append(~vars2,G2!1); 
   end for;
   for i in [1..n[2]] do 
     Append(~vars1,G1!1); Append(~vars2,G2.i); 
   end for;
   projs:=[hom< G1xG2->G1 | vars1>,hom< G1xG2->G2 | vars2>];
  return G1xG2, injs, projs;
end function;

// The next script computes, given two maps A->B (careful, they MUST be  
// between the same groups) the map product induced by the product on B

MapProd:=function(map1,map2)
  seq:=[]; G:=Domain(map1); H:=Codomain(map1);
  if Category(G) eq GrpPC then n:=NPCgens(G); 
  else n:=NumberOfGenerators(G); end if;
  for i in [1..n] do Append(~seq,map1(G.i)*map2(G.i)); end for;
  return hom<G->H|seq>;
end function;

// Finally, this program computes the fundamental group of a product-quotient 
// surface.

Pi1:=function(pairsofseqs,gr)
  T1,f1:=PolyGroup(pairsofseqs[1],gr); 
  T2,f2:=PolyGroup(pairsofseqs[2],gr);
  T1xT2,inT,proT:=DirProd(T1,T2); 
  grxgr,inG:=DirectProduct(gr,gr);
  Diag:=MapProd(inG[1],inG[2])(gr);
  f:=MapProd(proT[1]*f1*inG[1],proT[2]*f2*inG[2]);
  H:=Rewrite(T1xT2,Diag@@f); rels:=[];
  for i in [1..#pairsofseqs[1]] do g1:=pairsofseqs[1][i];
  for j in [1..#pairsofseqs[2]] do g2:=pairsofseqs[2][j];
  for d1 in [1..Order(g1)-1] do
  for d2 in [1..Order(g2)-1] do
    test,h:=IsConjugate(gr,g1^d1,g2^d2);
    if test then for c in Centralizer(gr,g1^d1) do
     Append(~rels, T1xT2.i^d1 * 
            ((T1xT2.(j+#pairsofseqs[1])^d2)^(inT[2]((h^-1*c) @@ f2))));
    end for; end if;
  end for; end for; end for; end for;
  return Simplify(quo<H|rels>);
end function;
\end{verbatim}
\end{footnotesize}

\end{document}